\theoremstyle{plain}
\newtheorem{theorem}{Theorem}[section]
\newtheorem*{theorem*}{Main Theorem}
\newtheorem{lemma}{Lemma}[section]
\newtheorem{proposition}{Proposition}[section]
\theoremstyle{definition}
\newtheorem{examples}{Examples}[section]
\newtheorem{remark}{Remark}[section]
\newcommand{\DP}[1]{(\tilde{\nabla}_{#1}P)}
\numberwithin{equation}{section}
\begin{document}

\title[On Hopf hypersurfaces of the nearly K\"ahler $\mathbf{S}^3\times\mathbf{S}^3$]
{On Hopf hypersurfaces of the homogeneous nearly K\"ahler $\mathbf{S}^3\times\mathbf{S}^3$}

\author[Zejun Hu and Zeke Yao]{Zejun Hu and Zeke Yao}

\address{School of Mathematics and Statistics, Zhengzhou University,
Zhengzhou 450001, People's Republic of China}
\email{huzj@zzu.edu.cn; yaozkleon@163.com}

\thanks{2010 {\it Mathematics Subject Classification}. 53B25, 53B35, 53C30, 53C42.}

\thanks{This project was supported by NSF of China, Grant Number 11771404.}

\keywords{Nearly K\"ahler manifold $\mathbf{S}^3\times\mathbf{S}^3$, Hopf hypersurface,
principal curvature, holomorphic distribution, almost product structure.}

\thanks{{\sl School of Mathematics and Statistics, Zhengzhou University,
Zhengzhou 450001, People's Republic of China}.\ \
{\bf E-mails}: huzj@zzu.edu.cn; yaozkleon@163.com}

\begin{abstract}
In this paper, extending our previous joint work (Hu et al., Math Nachr 291:343--373, 2018),
we initiate the study of Hopf hypersurfaces in the
homogeneous NK (nearly K\"ahler) manifold $\mathbf{S}^3\times\mathbf{S}^3$. First,
we show that any Hopf hypersurface of the homogeneous NK $\mathbf{S}^3\times\mathbf{S}^3$
does not admit two distinct principal curvatures. Then, for the important class
of Hopf hypersurfaces with three distinct principal curvatures, we establish a
complete classification under the additional condition that their
holomorphic distributions $\{U\}^\perp$ are preserved by the almost product
structure $P$ of the homogeneous NK $\mathbf{S}^3\times\mathbf{S}^3$.
\end{abstract}

\maketitle

%========================================
\section{Introduction}\label{sect:1}
Let $\bar{M}$ be an almost Hermitian manifold with almost complex structure $J$.
Given a connected orientable real hypersurface $M$ of $\bar{M}$, there appears an important
notion the {\it structure vector field} defined by $U:=-J\xi$, where $\xi$ is the unit
normal vector field. If the integral curves of $U$ are geodesics, then it is well known
that $M$ is called a {\it Hopf hypersurface}. During the last four decades, Hopf hypersurfaces of the complex space forms and
several other almost Hermitian manifolds have been extensively and deeply investigated,
for details we refer to \cite{B,CR,KM,M,NR} and \cite{BBW,BS,HJZ} and the references
therein.
Recall that a nearly K\"ahler (NK) manifold is an almost Hermitian manifold such that
the covariant derivative of the almost complex structure $J$ is skew-symmetric. It is
well known from Nagy's classification of nearly K\"ahler manifolds \cite{N} that
the six-dimensional ones are important construction factor, and from Butruille \cite{But1,
But2} that the only homogeneous $6$-dimensional NK manifolds are the $6$-sphere
$\mathbf{S}^6$, the $\mathbf{S}^3\times\mathbf{S}^3$, the complex projective space
$\mathbf{CP}^3$ and the flag manifold $SU(3)/U(1)\times U(1)$, and moreover from
Foscolo-Haskins \cite{F-H} that both $\mathbf{S}^6$ and $\mathbf{S}^3\times\mathbf{S}^3$
admit inhomogeneous NK structures.

Notice that the Riemannian geometric invariants of the homogeneous NK $\mathbf{S}^3\times
\mathbf{S}^3$ were systematically presented by Bolton-Dillen-Dioos-Vrancken \cite{B-D-D-V}.
Since then the study of the canonical submanifolds of the homogeneous NK $\mathbf{S}^3\times\mathbf{S}^3$
becomes quite active and many interesting results have been obtained. This includes the results
about almost complex surfaces in \cite{B-D-D-V,D-L-M-V,H-Z1}, about Lagrangian and CR submanifolds
in \cite{ADMV,B-M-J-L1, B-M-J-L2, D-V-W, H-Z2, Z-D-H-V-W}. Nevertheless, about hypersurfaces the
results are few that appear only in \cite{H-Y-Z,H-Y-Z2}.

The goal of this paper is to study Hopf hypersurfaces in the homogeneous NK $\mathbf{S}^3
\times\mathbf{S}^3$. In this situation, according to Proposition 1 of \cite{BBW}, the Hopf
condition is equivalent to that the structure vector field is a principal curvature vector
field of the hypersurface.

Our first concern is Hopf hypersurfaces with two distinct principal curvatures. The result we
obtain is the following:
\begin{theorem}\label{thm:1.1}
No Hopf hypersurface in the homogeneous NK $\mathbf{S}^3\times\mathbf{S}^3$ admits
exactly two distinct principal curvatures.
\end{theorem}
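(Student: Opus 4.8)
The plan is to derive a contradiction from the assumption that a Hopf hypersurface $M$ carries exactly two distinct principal curvatures. First I would fix the standard setup: since $M$ is Hopf, the structure vector field $U=-J\xi$ is principal, say $SU=\alpha U$, where $S$ is the shape operator and $\alpha$ the corresponding principal curvature. Writing the decomposition $TM=\mathbb{R}U\oplus\{U\}^\perp$, the assumption of two distinct principal curvatures forces the second eigenvalue, call it $\lambda$, to be constant on the complementary eigendistribution $\{U\}^\perp$ (possibly after checking whether $\alpha$ coincides with $\lambda$ or is distinct). I expect the argument to split naturally into the cases $\alpha=\lambda$ on part of the space versus $\alpha\neq\lambda$, but the heart of the matter is the interplay of the two structures $J$ and $P$ on the ambient $\mathbf{S}^3\times\mathbf{S}^3$ with the shape operator.

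Next I would bring in the Gauss and Codazzi equations for hypersurfaces of the homogeneous NK $\mathbf{S}^3\times\mathbf{S}^3$, using the explicit curvature tensor recorded by Bolton--Dillen--Dioos--Vrancken and the known covariant-derivative formulas for $J$ and the almost product structure $P$ that were established in the authors' earlier work. The strategy is to feed the eigenvector frame (built from $U$, $JU$, and eigenvectors of $S$ in $\{U\}^\perp$) into the Codazzi equation and read off differential constraints on the principal curvatures and on the angle functions measuring how $P$ acts on the frame. A crucial intermediate step is to control $PU$ and $PJU$: because $P$ and $J$ anti-commute in this geometry and $P$ is an involution, the vectors $PU,\,JPU$ generate further directions whose behavior under $S$ is constrained, and matching eigenvalues on these directions should over-determine the system.

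The hard part, I anticipate, will be handling the generic case where $\{U\}^\perp$ is \emph{not} assumed to be $P$-invariant — unlike the three-curvature classification later in the paper, Theorem \ref{thm:1.1} carries no such hypothesis, so the projections of $PU$ onto $U$, $JU$, and the $\lambda$-eigenspace all enter as unknown functions. I would therefore expect to spend the bulk of the effort extracting, from the Codazzi equation applied to pairs such as $(U,X)$ and $(JU,X)$ with $X$ a $\lambda$-eigenvector, enough relations to show that these angle functions and $\lambda$ cannot be simultaneously consistent with the curvature identities. Concretely, I would differentiate the relation $SU=\alpha U$ and the eigenvalue equation $SX=\lambda X$ along $U$, substitute the NK covariant-derivative identities to express $\tilde\nabla_U U$, $\tilde\nabla_U(JU)$ and their $P$-images in the eigenframe, and compare the resulting scalar equations. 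The contradiction should emerge as an inconsistency among the constant-rank possibilities for $S$ once the $J$- and $P$-compatibility forces a third eigenvalue or collapses the two into one; pinning down exactly which algebraic relation breaks is the main obstacle, and I would guard against the possibility that the two-curvature configuration degenerates (e.g.\ $\{U\}^\perp$ becoming totally $J$-invariant) by treating such degenerations as separate subcases.
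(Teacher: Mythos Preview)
Your outline captures the broad spirit---assume two principal curvatures, exploit Codazzi together with the $J$- and $P$-structures, derive a contradiction---but there are two genuine gaps that would stall the argument as written.

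First, you implicitly assume the ``nice'' configuration $(\dim V_\alpha,\dim V_\lambda)=(1,4)$, i.e.\ that $\lambda$ fills all of $\{U\}^\perp$. In fact the Hopf condition only says $U\in V_\alpha$; the multiplicities can be any of $(1,4),(2,3),(3,2),(4,1)$, and the paper treats each separately. The $(1,4)$ case is the easiest (it forces $A\phi=\phi A$, which was already excluded in the authors' earlier work), while the $(3,2)$ case with $\dim\mathfrak{D}=4$ is by far the hardest: it requires introducing the angle $\theta=|g(JX_1,X_2)|$ on $V_\alpha\cap\{U\}^\perp$, building an adapted frame, and pushing through a lengthy matrix computation with the Codazzi equation to reach a contradiction. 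Your proposal does not anticipate this case split, and differentiating $SX=\lambda X$ along $U$ will not by itself resolve the mixed-multiplicity cases.

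Second, the paper's engine is not the raw Codazzi equation applied to $(U,X)$, but rather the derived identity of Lemma~2.1 (equation~(2.19)), applied to pairs $X,Y\in\{U\}^\perp$, together with the norm identity $|G(X,Y)|^2=\tfrac13$ for suitable orthonormal $X,Y$ (from~(2.6)). The contradictions in almost every subcase come from showing $G(X_i,X_j)=0$ for some pair while~(2.6) forces it to have length $1/\sqrt3$. Your plan does not mention the tensor $G=(\tilde\nabla J)$ or this length constraint, and without it you are missing the mechanism that actually closes each case. The organizing dichotomy $\dim\mathfrak{D}\in\{2,4\}$, where $\mathfrak{D}=\mathrm{Span}\{\xi,U,P\xi,PU\}$, is also absent from your outline; this is what governs whether $P$ preserves $\{U\}^\perp$ and determines which adapted frame to use.
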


Our next concern is Hopf hypersurfaces with three distinct principal curvatures. It turns out that
hypersurfaces of this class are quite complicated and examples of at least three families appear.
As the second main result of this paper, we obtain a classification of them under the additional/natural
condition that
their holomorphic distributions $\{U\}^\perp$ are preserved by the almost product structure $P$ of
the homogeneous NK $\mathbf{S}^3\times\mathbf{S}^3$.
Before stating the result, we would recall that, according to Moruz-Vrancken \cite{M-V} and Podest\`a-Spiro
\cite{P-S}, the following three maps
\begin{enumerate}
\item[(1)]
$\mathcal{F}_1:\ \mathbf{S}^3 \times \mathbf{S}^3 \rightarrow
          \mathbf{S}^3\times \mathbf{S}^3{\rm\ with\ }
          \mathcal{F}_1(p,q)=(q,p),$
\item[(2)]
$\mathcal{F}_2:\ \mathbf{S}^3 \times \mathbf{S}^3 \rightarrow
          \mathbf{S}^3\times \mathbf{S}^3{\rm\ with\ }
          \mathcal{F}_2(p,q)=(\bar{p},q\bar{p}),$
\item[(3)]
$\mathcal{F}_{abc}:\ \mathbf{S}^3 \times \mathbf{S}^3 \rightarrow
          \mathbf{S}^3\times \mathbf{S}^3{\rm\ with\ }
          \mathcal{F}_{abc}(p,q)=(ap\bar c, bq\bar c)$ for any unitary quaternions $a,b,c$
\end{enumerate}
are isometries of the NK $\mathbf{S}^3 \times \mathbf{S}^3$. Then, the result can be stated as follows:

\begin{theorem}\label{thm:1.2}
Let $M$ be a Hopf hypersurface of the homogeneous NK $\mathbf{S}^3\times\mathbf{S}^3$ with
three distinct principal curvatures. If $P\{U\}^\perp=\{U\}^\perp$, then, up to isometries
of type $\mathcal{F}_{abc}$, $M$ is locally given by one of the following embeddings
$f_r, f_r'$ and $f_r'': \mathbf{S}^3\times\mathbf{S}^2\rightarrow\mathbf{S}^3\times\mathbf{S}^3$
defined by:
$$
f_r(x,y)=(x,\sqrt{1-r^2}+ry),\ \ \ \ f_r'=\mathcal{F}_1\circ f_r,\ \ \ \ f_r''=\mathcal{F}_2\circ f_r,
$$
where $0<r\leq1$, $x\in\mathbf{S}^3$, $y\in\mathbf{S}^2\subset\mathbb{R}^3$, and as usual
$\mathbf{S}^3$ (resp. $\mathbf{S}^2$) is regarded as the set of the unitary (resp. imaginary)
quaternions in the quaternion space $\mathbb{H}$.
\end{theorem}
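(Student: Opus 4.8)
The plan is to first convert the hypothesis $P\{U\}^\perp=\{U\}^\perp$ into pointwise algebraic normal forms for $P$, and then feed these into the Gauss--Codazzi machinery of the ambient NK structure. Since $P$ is a self-adjoint involution of $T\bar{M}$ that preserves the $4$-dimensional distribution $\{U\}^\perp$, it also preserves the orthogonal plane $\mathrm{span}\{U,\xi\}$. Writing $a:=g(P\xi,\xi)$ and using $PJ=-JP$ together with $U=-J\xi$, one obtains
\[
P\xi=a\,\xi+b\,U,\qquad PU=b\,\xi-a\,U,\qquad a^2+b^2=1,
\]
so that $P$ acts on this plane as a reflection governed by the single angle function $a$. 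On $\{U\}^\perp$ the involution $P$ splits the distribution into its $\pm1$-eigenspaces $\mathcal{D}_\pm$, and because $PJ=-JP$ the structure $J$ interchanges $\mathcal{D}_+$ and $\mathcal{D}_-$; hence $\dim\mathcal{D}_+=\dim\mathcal{D}_-=2$ and $J\mathcal{D}_\pm=\mathcal{D}_\mp$. This eigenspace picture, with $a$ as essentially the only scalar unknown, is the backbone of the whole argument.

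Next I would analyse the shape operator $S$. The Hopf condition gives $SU=\alpha U$, and since $g(SX,U)=g(X,SU)=\alpha g(X,U)$ the operator $S$ preserves $\{U\}^\perp$; thus the three distinct principal curvatures are $\alpha$ together with the eigenvalues of $S|_{\{U\}^\perp}$. Differentiating $U=-J\xi$ and comparing tangential and normal parts via the Gauss and Weingarten formulas yields the commutation rule between $S$ and $J$ on $\{U\}^\perp$, corrected by the NK tensor $\Dj{\cdot}$; likewise, differentiating the relations for $P\xi$ and $PU$ above and using the explicit formula for $\DP{\cdot}$ in the homogeneous NK $\mathbf{S}^3\times\mathbf{S}^3$ produces the interaction of $S$ with the splitting $\mathcal{D}_+\oplus\mathcal{D}_-$. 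Combining these with the assumption of \emph{exactly} three distinct principal curvatures, I expect to be forced into a rigid configuration: the two non-Hopf curvatures $\lambda,\mu$ have $2$-dimensional eigenspaces adapted to $\mathcal{D}_+\oplus\mathcal{D}_-$, and the functions $\alpha,\lambda,\mu,a,b$ satisfy a closed algebraic--differential system.

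I would then integrate this system. The Codazzi equation, written out on the adapted frame, gives first-order PDEs for $\alpha,\lambda,\mu$ and for the angle function $a$, while the Gauss equation supplies the algebraic relations among the curvatures in terms of the curvature tensor of $\mathbf{S}^3\times\mathbf{S}^3$ (itself expressible through $g$, $J$, $P$ and $\Dj{\cdot}$). The goal is to show the system degenerates so sharply that $a$ (hence $b$) is pinned down, the three curvatures are determined up to a single parameter $r$, and the principal distributions become integrable; one must here rule out the spurious branches that would collapse to fewer than three distinct principal curvatures (contradicting the hypothesis, cf.\ Theorem~\ref{thm:1.1}) or violate the Gauss equation.

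Finally, to identify $M$ explicitly I would pass to the realization $\mathbf{S}^3\times\mathbf{S}^3\subset\mathbb{H}\times\mathbb{H}$ and integrate the position vector using the flat connection $\nabla^E$: the adapted frame and the determined second fundamental form give an ODE system whose solution, after applying a suitable ambient isometry $\mathcal{F}_{abc}$, is exactly $f_r(x,y)=(x,\sqrt{1-r^2}+ry)$. The two remaining positions of the normal direction relative to the canonical product structures of $\mathbf{S}^3\times\mathbf{S}^3$, which are permuted by $\mathcal{F}_1$ and $\mathcal{F}_2$, then account for $f_r'=\mathcal{F}_1\circ f_r$ and $f_r''=\mathcal{F}_2\circ f_r$. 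The main obstacle, I expect, is precisely this last reconstruction together with the Codazzi integration of the previous step: one must track how the single angle function $a$ couples the three curvatures through the NK torsion and then reverse-engineer the immersion from the frame, and the bookkeeping of $\Dj{\cdot}$ and $\DP{\cdot}$ in the anticommuting pair $(J,P)$ is where the computation is heaviest.
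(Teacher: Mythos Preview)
Your broad architecture---normalize $P$ on $\mathrm{span}\{\xi,U\}$, feed the result into Gauss--Codazzi, pin down the structure, then reconstruct the immersion---matches the paper's. But two of your concrete expectations are off, and the paper's actual execution hinges on a structural parameter you never introduce.

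First, the eigenspaces of the shape operator are \emph{not} adapted to your $P$-eigenspaces $\mathcal{D}_\pm$. In the paper's frame $\{X_i\}$ of principal directions (with $X_1,X_2\in V_\lambda$, $X_3,X_4\in V_\beta$), the action of $P$ on $\{U\}^\perp$ is the full matrix \eqref{eqn:5.38}, mixing $V_\lambda$ and $V_\beta$ nontrivially. So building the frame from $\mathcal{D}_\pm$ and hoping $S$ diagonalizes there would not work; the paper instead works in the $S$-eigenframe and treats $PX_i=\sum_j p_{ij}X_j$ as unknowns to be solved for.

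Second, and more importantly, you are missing the central parameter. The continuous modulus $r$ does \emph{not} come from $a$; in fact $a$ is pinned to one of three discrete values $a\in\{\tfrac12,\tfrac12,-1\}$ (Proposition~\ref{prop:5.1}), and these three values are exactly what index the three families $f_r,f_r',f_r''$. The genuine one-parameter freedom is $\theta:=|g(JX_1,X_2)|$ for $X_1,X_2$ an orthonormal basis of $V_\lambda$---the angle by which $V_\lambda$ fails to be $J$-invariant. The paper first shows the multiplicities must be $(1,2,2)$ (Lemma~\ref{lemma:5.1}, a nontrivial case analysis you do not address), then uses the Hopf identity \eqref{eqn:2.19} repeatedly with \eqref{eqn:3.7} to force $\alpha=0$, $\lambda=\tfrac{\sqrt{1-\theta^2}+1}{2\sqrt{3}\theta}$, $\beta=\tfrac{\sqrt{1-\theta^2}-1}{2\sqrt{3}\theta}$, all constant (Lemma~\ref{lemma:5.2}); a spurious second branch is killed separately (Lemma~\ref{lemma:5.3}). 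Only after all this, and after solving for the $p_{ij}$, does the relation \eqref{eqn:2.9} on $G(PX_1,PX_2)$ yield $(1-2a)(1+a)=0$.

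Finally, the reconstruction in the paper is not via $\nabla^E$ and an ODE. Instead one builds a new (non-orthonormal) frame $\{\bar e_i\}$ in which the usual product structure $Q=\tfrac{1}{\sqrt3}(2PJ-J)$ becomes diagonal with eigenvalues $(+1,+1,-1,-1,-1)$; the two distributions are shown integrable, and the $Q$-splitting then forces the immersion $(p,q)$ to factor as $p=p(x)$, $q=q(y)$, after which $p$ is an isometry of $\mathbf{S}^3$ and $q(\mathbf{S}^2)$ is a totally umbilical small sphere of radius $r=\tfrac{\sqrt3\,\theta}{\sqrt{1+2\theta^2}}$. The other two cases are handled by pushing through $\mathcal{F}_1,\mathcal{F}_2$ rather than by a parallel computation, using the known intertwining of $d\mathcal{F}_i$ with $J$ and $P$.
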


\begin{remark}\label{rem:1.1}
Let $M_1^{(r)},M_2^{(r)},M_3^{(r)}$ denote the images of the three embeddings $f_r, f_r',f_r''$, respectively.
Then, for $0<r\le1$, $M_1^{(r)},M_2^{(r)}$ and $M_3^{(r)}$ correspond to the
three possibilities of the action $P$ on the unit normal vector field $\xi$,
which we shall establish in Proposition \ref{prop:5.1} below.
\end{remark}

\begin{remark}\label{rem:1.2}
Theorem \ref{thm:1.2} is an extension of the previous result in \cite{H-Y-Z}, where the
hypersurfaces $M_1^{(r)},M_2^{(r)},M_3^{(r)}$ corresponding to $r=1$ were characterized by the property
of satisfying $A\phi=\phi A$, where $A$ is the shape operator of the hypersurfaces and
$\phi$ is the almost contact structure induced from $J$. Moreover, it is worthy to mention
that each of the hypersurfaces $M_1^{(r)},M_2^{(r)}$ and $M_3^{(r)}$ is minimal if and only if $r=1$.
\end{remark}

\begin{remark}\label{rem:1.3}
Theorem \ref{thm:1.2} shows that Niebergall and Ryan's observation (cf. p.234 of \cite{NR}),
which states that certain interesting classes of hypersurfaces in the complex space forms
can be characterized by conditions on the holomorphic distribution $\{U\}^\perp$, is similarly
valid for the homogeneous NK $\mathbf{S}^3\times\mathbf{S}^3$. On the other hand, at the moment
we do not know if there exist Hopf hypersurfaces of the homogeneous NK
$\mathbf{S}^3\times\mathbf{S}^3$ that have three distinct principal curvatures and satisfy $P\{U\}^\perp\not=\{U\}^\perp$.
\end{remark}

%========================================================
\section{Preliminaries}\label{sect:2}
\subsection{The homogeneous NK structure on $\mathbf{S}^3\times\mathbf{S}^3$}\label{sect:2.1}~

One can look the classical and comprehensive study of the NK manifolds from [14]. In this 
section, we first collect some necessary materials from \cite{B-D-D-V}. Let us denote by
$\mathbf{S}^3$ the $3$-sphere in $\mathbb{R}^4$ as the set of all unitary quaternions.
By the natural identification $T_{(p,q)}(\mathbf{S}^3\times\mathbf{S}^3)\cong T_p
\mathbf{S}^3\oplus T_q\mathbf{S}^3$, we write a tangent vector at $(p,q)\in
\mathbf{S}^3 \times\mathbf{S}^3$ as $Z(p,q)=(U_{(p,q)},V_{(p,q)})$ or simply $Z=(U,V)$.
The well-known almost complex structure $J$ on $\mathbf{S}^3\times\mathbf{S}^3$
is defined by
\begin{equation}\label{eqn:2.1}
J Z(p,q)=\tfrac{1}{\sqrt{3}}(2pq^{-1}V-U, -2qp^{-1}U+V).
\end{equation}

On $\mathbf{S}^3\times\mathbf{S}^3$ we can define a Hermitian metric $g$ compatible with $J$ by
\begin{equation}\label{eqn:2.2}
\begin{split}
g(Z, Z')& = \tfrac{1}{2}(\langle Z,Z'\rangle  + \langle JZ, JZ' \rangle )\\
& = \tfrac{4}{3}(\langle U,U' \rangle + \langle V,V' \rangle)
   - \tfrac{2}{3}(\langle p^{-1}U, q^{-1}V' \rangle + \langle p^{-1}U', q^{-1}V \rangle),
\end{split}
\end{equation}
where $Z=(U, V)$ and $Z'=(U', V')$ are tangent vectors, and
$\langle\cdot,\cdot\rangle$ is the standard product metric on
$\mathbf{S}^3\times\mathbf{S}^3$. Then $\{g,J\}$ gives the
homogeneous NK structure on $\mathbf{S}^3\times\mathbf{S}^3$.

Let $\tilde\nabla$ be the Levi-Civita connection with respect to $g$, and
as usual we define a $(1,2)$-tensor field $G$ by $G(X,Y):=(\tilde\nabla_X J)Y$
for $X,Y\in T(\mathbf{S}^3\times\mathbf{S}^3)$. Then, we have the following
formulas for $G$:
\begin{gather}
G(X,Y)+G(Y,X)=0,\label{eqn:2.3}\\
G(X,JY)+JG(X,Y)=0,\label{eqn:2.4}\\
g(G(X,Y),Z)+g(G(X,Z),Y)=0,\label{eqn:2.5}\\
\begin{aligned}\label{eqn:2.6}
g(G(X,Y),G(Z,W))=&\tfrac{1}{3}\big[g(X,Z)g(Y,W)-g(X,W)g(Y,Z)\\
&\quad +g(JX,Z)g(JW,Y)-g(JX,W)g(JZ,Y)\big].
\end{aligned}
\end{gather}

An almost product structure $P$ on $\mathbf{S}^3\times\mathbf{S}^3$ is
introduced by
\begin{equation}\label{eqn:2.7}
PZ=(pq^{-1}V, qp^{-1}U),\ \ \forall\, Z=(U,V)\in T_{(p,q)}(\mathbf{S}^3\times\mathbf{S}^3).
\end{equation}

It is easily seen that $P$ is compatible with the metric $g$, i.e., $P$ is symmetric with
respect to $g$. Also $P$ is anti-commutative with $J$. Moreover, with respect to $G$ and $P$,
we further have
\begin{equation}\label{eqn:2.8}
2\DP{X}Y=JG(X,PY)+JPG(X,Y),
\end{equation}
\begin{equation}\label{eqn:2.9}
PG(X,Y)+G(PX,PY)=0.
\end{equation}

Note also that in terms of $P$ the usual product structure $Q$, defined by $Q(Z)=(-U,V)$ for $Z=(U,V)$,
can be expressed by
\begin{equation}\label{eqn:2.10}
QZ=\tfrac{1}{\sqrt{3}}(2PJZ-JZ).
\end{equation}

For the NK $\mathbf{S}^3\times\mathbf{S}^3$, we also need the useful relation between
the NK connection $\tilde{\nabla}$ and the usual Euclidean connection $\nabla^E$ (cf. Lemma 2.2 of
\cite{D-L-M-V} and Remark 2.5 of \cite{D-V-W}):
\begin{equation}\label{eqn:2.11}
\nabla_X^E Y=\tilde{\nabla}_X Y+\tfrac12[JG(X,PY)+JG(Y,PX)].
\end{equation}

The Riemannian curvature tensor $\tilde R$ of the NK $\mathbf{S}^3\times\mathbf{S}^3$
is given by
\begin{equation}\label{eqn:2.12}
\begin{split}
\tilde{R}(X,Y)Z=&\tfrac{5}{12}\big[g(Y,Z)X-g(X,Z)Y\big]\\
 &+\tfrac{1}{12}\big[g(JY,Z)JX-g(JX,Z)JY-2g(JX,Y)JZ\big]\\
 &+\tfrac{1}{3}\big[g(PY,Z)PX-g(PX,Z)PY\\
 &\qquad +g(JPY,Z)JPX-g(JPX,Z)JPY\big].
\end{split}
\end{equation}

\subsection{Hypersurfaces of the NK $\mathbf{S}^3\times\mathbf{S}^3$}\label{sect:2.2}~

Let $M$ be a hypersurface of the NK $\mathbf{S}^3\times\mathbf{S}^3$
with unit normal vector field $\xi$. For any vector field $X$ tangent
to $M$, we have the decomposition
\begin{equation}\label{eqn:2.13}
JX=\phi X+\eta(X)\xi,
\end{equation}
where $\phi X$ and $\eta(X)\xi$ are the tangent and normal parts of $JX$,
respectively. Then $\phi$ is a tensor field of type (1,1), $\eta$ is a
$1$-form on $M$. By definition, the following relations hold:
\begin{equation}\label{eqn:2.14}
\left\{
\begin{aligned}
&\eta(X)=g(X,U),\ \ \eta(\phi X)=0,\ \ \phi^2X=-X+\eta(X)U,\ \ \phi U=0,\\
&g(\phi X,Y)=-g(X,\phi Y),\ \ g(\phi X,\phi Y)=g(X,Y)-\eta(X)\eta(Y),
\end{aligned}\right.
\end{equation}
where $U:=-J\xi$ is called the {\it structure vector field} of $M$. The
equations \eqref{eqn:2.14} show that $(\phi,U,\eta,g)$ determines
an {\it almost contact metric structure} over $M$.

Let $\nabla$ be the induced connection on $M$ and $R$ its Riemannian
curvature tensor. The formulas of Gauss and Weingarten state that
\begin{equation}\label{eqn:2.15}
\begin{split}
\tilde \nabla_X Y=\nabla_X Y + h(X,Y),\quad \tilde \nabla_X \xi=-
A X , \ \ \forall\, X,Y \in TM,
\end{split}
\end{equation}
where $h$ is the second fundamental form and $A$ is the shape operator.
They are related by $h(X,Y)=g(AX,Y)\xi$. Using the formulas of Gauss and
Weingarten, we can easily show that
\begin{equation}\label{eqn:2.16}
\nabla_X U=\phi AX-G(X,\xi).
\end{equation}

The Gauss and Codazzi equations of $M$ are given by
\begin{equation}\label{eqn:2.17}
\begin{split}
R(X,Y)Z=&\tfrac{5}{12}\big[g(Y,Z)X - g(X,Z)Y\big]\\
 &+ \tfrac{1}{12}\big[g(JY,Z)\phi X - g(JX,Z)\phi Y - 2g(JX,Y)\phi Z\big]\\
 &+ \tfrac{1}{3}\Big[g(PY,Z)(PX)^\top - g(PX,Z)(PY)^\top\\
 &\qquad + g(JPY,Z)(JPX)^\top - g(JPX,Z)(JPY)^\top\Big]\\&
 +g(AZ,Y)AX-g(AZ,X)AY,
 \end{split}
\end{equation}
and
\begin{equation}\label{eqn:2.18}
\begin{split}
(\nabla_X A)Y-(\nabla_Y A)X=&\tfrac{1}{12}\big[g(X,U)\phi Y - g(Y,U)\phi X - 2g(JX,Y)U\big]\\
&+ \tfrac{1}{3}\Big[g(PX,\xi)(PY)^\top - g(PY,\xi)(PX)^\top\\
 &\qquad + g(PX,U)(JPY)^\top - g(PY,U)(JPX)^\top\Big],
\end{split}
\end{equation}
where $\cdot^\top$ means the tangential part.

Similar to that of the complex space forms, a hypersurface $M$ of the NK
$\mathbf{S}^3\times\mathbf{S}^3$ is a Hopf hypersurface if and only if
the integral curves of its structure vector field $U$ are geodesics,
i.e., $\nabla_U U=0$. We denote by $\alpha$ the principal
curvature function corresponding to the structure vector field $U$, i.e., $AU=\alpha U$.
First of all, we shall present two elementary lemmas for Hopf
hypersurfaces of the NK $\mathbf{S}^3\times\mathbf{S}^3$ as follows:
\begin{lemma}[cf. \cite{H-Y-Z2}]\label{lemma:2.1}
Let $M$ be a Hopf hypersurface in the NK $\mathbf{S}^3
\times \mathbf{S}^3$. Then we have
\begin{equation}\label{eqn:2.19}
\begin{aligned}
\tfrac16&g(\phi X,Y)-\tfrac23\big[g(PX,\xi)g(PY,U)-g(PX,U)g(PY,\xi)\big]\\
&=g((\alpha I-A)G(X,\xi),Y)+g(G((\alpha I-A)X,\xi),Y)\\
&\ \ \ -\alpha g((A\phi +\phi A)X,Y)+2g(A\phi AX,Y),\ \ X,Y\in
\{U\}^{\bot},
\end{aligned}
\end{equation}
where $\{U\}^\perp$ denotes the subdistribution of $TM$ that is
orthogonal to $U$, and $I$ denotes the identity transformation.
\end{lemma}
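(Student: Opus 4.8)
The plan is to compute the antisymmetrized quantity $g\bigl((\nabla_X A)U,Y\bigr)-g\bigl((\nabla_Y A)U,X\bigr)$ for $X,Y\in\{U\}^\perp$ in two independent ways---once from the Hopf condition and once from the Codazzi equation---and then to match them. Differentiating the Hopf condition $AU=\alpha U$ in the direction of $X$ and invoking \eqref{eqn:2.16} gives
\[
(\nabla_X A)U=(X\alpha)U+(\alpha I-A)\bigl(\phi AX-G(X,\xi)\bigr).
\]
Pairing with $Y\in\{U\}^\perp$ annihilates the $(X\alpha)U$ term. Antisymmetrizing in $X,Y$ and using the skew-symmetry of $\phi$, the self-adjointness of $A$, and the algebraic identities \eqref{eqn:2.3} and \eqref{eqn:2.5} for $G$ (notably $g(G(X,\xi),Y)=-g(G(X,Y),\xi)$ and $G(Y,X)=-G(X,Y)$), the four resulting pieces reorganize precisely into $\alpha\,g((A\phi+\phi A)X,Y)$, $-2g(A\phi AX,Y)$, and the $G$-terms $-2\alpha\,g(G(X,\xi),Y)+g(AG(X,\xi),Y)+g(G(AX,\xi),Y)$. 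This expression is exactly the negative of the right-hand side of \eqref{eqn:2.19}.

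Next I would compute the same antisymmetrized quantity from the Codazzi equation \eqref{eqn:2.18} with its second argument set equal to $U$. Writing $(\nabla_X A)U=(\nabla_U A)X+C(X,U)$, where $C$ denotes the right-hand side of \eqref{eqn:2.18}, and likewise with $X,Y$ interchanged, the two $\nabla_U A$ contributions cancel because $\nabla_U A$ is self-adjoint; what remains is $g(C(X,U),Y)-g(C(Y,U),X)$. For $X\in\{U\}^\perp$ the $\tfrac1{12}$-bracket of $C(X,U)$ collapses to $-\tfrac1{12}\phi X$, using $\phi U=0$, $g(X,U)=0$, and $g(JX,U)=-g(X,JU)=-g(X,\xi)=0$; upon antisymmetrizing this yields the term proportional to $g(\phi X,Y)$, whose coefficient works out to match the $\tfrac16$ in \eqref{eqn:2.19}.

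The substantive work is the $P$-bracket of \eqref{eqn:2.18}. Here the two terms carrying the factor $g(PU,\xi)$ drop out because $P$ is self-adjoint (so $g(PX,Y)=g(PY,X)$), and the two terms carrying $g(PU,U)$ drop out because $JP$ is self-adjoint---being the product of the self-adjoint $P$ and the skew-adjoint $J$, which anticommute, so that $(JP)^{\!\top}=-PJ=JP$. The four surviving terms are then reorganized using $g(PU,X)=g(PX,U)$ together with the identity $g(JPX,U)=-g(PX,JU)=-g(PX,\xi)$ (from $JU=\xi$ and the fact that $J$ is an isometry), and they combine into $\tfrac23\bigl[g(PX,\xi)g(PY,U)-g(PX,U)g(PY,\xi)\bigr]$. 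Thus the Codazzi computation equals the negative of the left-hand side of \eqref{eqn:2.19}. Equating the two evaluations of $g((\nabla_X A)U,Y)-g((\nabla_Y A)U,X)$ gives $-(\text{RHS})=-(\text{LHS})$, which is \eqref{eqn:2.19}.

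I expect the principal obstacle to be entirely in the last step: the sign bookkeeping in the $P$-bracket, where one must track the skew/self-adjointness of $\phi$, $A$, $G$, $P$, and $J$ simultaneously and verify that the spurious $J$-terms and the $g(PU,\xi)$-, $g(PU,U)$-terms cancel exactly, leaving the clean wedge combination. The Hopf side and the cancellation of $\nabla_U A$ are routine by comparison.
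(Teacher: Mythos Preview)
Your proposal is correct; the paper does not actually prove this lemma but cites it from \cite{H-Y-Z2}, and your argument---antisymmetrizing $g((\nabla_X A)U,Y)$ via the Hopf condition on one side and via Codazzi on the other---is the standard and essentially unique route to such an identity. The sign bookkeeping you flag as the main obstacle checks out: the $g(PU,\xi)$- and $g(PU,U)$-weighted terms vanish by the self-adjointness of $P$ and of $JP$ exactly as you say, and the surviving four terms combine to $\tfrac{2}{3}\bigl[g(PX,\xi)g(PY,U)-g(PX,U)g(PY,\xi)\bigr]$ after the substitutions $g(PU,Y)=g(PY,U)$ and $g(JPU,Y)=-g(PY,\xi)$.
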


\begin{lemma}\label{lemma:2.2}
Let $M$ be a Hopf hypersurface in the NK $\mathbf{S}^3\times \mathbf{S}^3$ satisfying
$P\{U\}^\bot=\{U\}^\perp$. Then the function $\alpha$ is
constant.
\end{lemma}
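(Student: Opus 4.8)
The plan is to prove constancy in two stages: first that $\mathrm{grad}\,\alpha$ is a multiple of $U$, and then that the multiple $\beta:=U\alpha$ vanishes. Since $M$ is Hopf, $AU=\alpha U$, so $\alpha=g(AU,U)$ and for every $X$ one has $X\alpha=g((\nabla_XA)U,U)$ (the cross term $g(A\nabla_XU,U)=\alpha\,g(\nabla_XU,U)=0$). First I would differentiate $AU=\alpha U$ and feed it into the Codazzi equation \eqref{eqn:2.18} with second argument $U$. Using $\phi U=0$, $g(JX,U)=0$, and — this is where the hypothesis enters — the fact that $PX\in\{U\}^\perp$ for $X\in\{U\}^\perp$ (so $g(PX,\xi)=g(PX,U)=0$), the $U$-component of \eqref{eqn:2.18} makes every curvature term drop out, leaving $X\alpha=\beta\,\eta(X)$. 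In particular $X\alpha=0$ on $\{U\}^\perp$, i.e. $\mathrm{grad}\,\alpha=\beta U$ and $d\alpha=\beta\,\eta$.

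Next I would use the symmetry of the Hessian of $\alpha$. Writing $\mathrm{Hess}\,\alpha(X,Y)=g(\nabla_X(\beta U),Y)=(X\beta)\eta(Y)+\beta\,g(\nabla_XU,Y)$ and invoking \eqref{eqn:2.16}, the vanishing of the skew part over $\{U\}^\perp$ (where the $(X\beta)\eta(Y)$ terms disappear) yields $\beta\,d\eta(X,Y)=0$ for $X,Y\in\{U\}^\perp$, where
\begin{equation*}
d\eta(X,Y)=g\big((\phi A+A\phi)X,Y\big)+2\,g(G(X,Y),\xi).
\end{equation*}
Thus it remains to rule out $\beta\neq0$, and the idea is to show that the locus $\{d\eta=0\}$ is in fact empty.

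The hard part is this last step, and the plan is to combine the Hopf identity of Lemma \ref{lemma:2.1} with a linear-algebra obstruction. At a point where $d\eta=0$ one reads off $G(X,\xi)=\tfrac12(\phi A+A\phi)X$ on $\{U\}^\perp$ (a skew operator with vanishing associated $2$-form is zero). Because $P\{U\}^\perp=\{U\}^\perp$ annihilates all the $P$-terms in \eqref{eqn:2.19}, substituting this expression for $G(\cdot,\xi)$ into \eqref{eqn:2.19} collapses it — after the $\alpha$-dependent terms cancel — to the $\alpha$-free operator identity
\begin{equation*}
[A,[A,\phi]]=-\tfrac13\,\phi \qquad\text{on }\{U\}^\perp .
\end{equation*}
This cannot hold: on the $4$-dimensional space $\{U\}^\perp$, $\phi$ is a $g$-orthogonal complex structure ($\phi^2=-I$) and $A$ is symmetric, so pairing with $\phi$ in the trace metric gives $\mathrm{tr}\big([A,[A,\phi]]\phi\big)=-2\big(\mathrm{tr}(A^2)+\mathrm{tr}(A\phi A\phi)\big)\le0$, since $\|\phi A\phi\|=\|A\|$ forces $\mathrm{tr}(A\phi A\phi)\ge-\mathrm{tr}(A^2)$ by Cauchy–Schwarz; whereas $\mathrm{tr}\big((-\tfrac13\phi)\phi\big)=-\tfrac13\,\mathrm{tr}(\phi^2)=\tfrac43>0$, a contradiction. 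Hence $d\eta\neq0$ everywhere, so $\beta\,d\eta=0$ forces $\beta=U\alpha\equiv0$; together with $X\alpha=0$ on $\{U\}^\perp$ this shows $\alpha$ is constant. I expect the main obstacle to be exactly this recognition that the degenerate locus $\{d\eta=0\}$ is controlled by Lemma \ref{lemma:2.1} and yields an identity forbidden on a complex vector space; the two preliminary reductions are routine.
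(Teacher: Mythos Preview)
Your proof is correct, and it shares with the paper the first reduction (via the $U$-component of the Codazzi equation) to $\nabla\alpha=(U\alpha)\,U$, as well as the second reduction that $U\alpha\neq0$ would force $g([X,Y],U)=0$ on $\{U\}^\perp$ (your $d\eta|_{\{U\}^\perp}=0$). The point of departure is the final contradiction: the paper observes that $g([X,Y],U)=0$ makes $\{U\}^\perp$ an integrable $J$-invariant distribution, whose leaves would be four-dimensional almost complex submanifolds of the NK $\mathbf{S}^3\times\mathbf{S}^3$, and then invokes Lemma~2.2 of Podest\`a--Spiro to exclude this. You instead stay inside the paper: from $d\eta=0$ you extract $G(X,\xi)=\tfrac12(\phi A+A\phi)X$ on $\{U\}^\perp$, feed it into Lemma~\ref{lemma:2.1} (whose $P$-terms vanish under the hypothesis), and obtain the pointwise operator identity $[A,[A,\phi]]=-\tfrac13\phi$ on $\{U\}^\perp$; tracing against $\phi$ then gives $-2\bigl(\mathrm{tr}(A^2)+\mathrm{tr}(A\phi A\phi)\bigr)=\tfrac43$, which is impossible since Cauchy--Schwarz applied to the symmetric operators $A$ and $\phi A\phi$ (with $\|\phi A\phi\|=\|A\|$) yields $\mathrm{tr}(A\phi A\phi)\ge-\mathrm{tr}(A^2)$. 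Your route is longer but self-contained --- it avoids the external reference and rests only on Lemma~\ref{lemma:2.1} and linear algebra --- while the paper's argument is a two-line appeal to a known structural obstruction.
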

\begin{proof}
By using the Codazzi equation and the symmetry of $A$, we have the calculation
\begin{equation*}
0=g((\nabla_U A)Y-(\nabla_Y A)U,U)=g((\nabla_U A)U,Y)-g((\nabla_Y A)U,U)=-Y\alpha,\ Y\in\{U\}^\perp.
\end{equation*}
It follows that $\nabla\alpha=(U\alpha)\,U$. Then, for $X,Y\in\{U\}^\perp$, we have
\begin{equation}\label{eqn:2.20}
0=X(Y\alpha)-Y(X\alpha)=[X,Y]\alpha=g([X,Y],U)\,U\alpha.
\end{equation}
If $U\alpha\neq0$ holds on some open set, then \eqref{eqn:2.20} implies that
$[X,Y]\in\{U\}^\perp$. Thus $\{U\}^\perp$ is integrable which gives four-dimensional
almost complex submanifolds of the NK $\mathbf{S}^3\times \mathbf{S}^3$. This is
impossible because, according to Lemma 2.2 of \cite{P-S}, any six-dimensional compact
non-K\"ahler NK manifold admits no almost complex four-dimensional submanifold.
Hence $U\alpha=0$ and $\alpha$ is constant.
\end{proof}

\subsection{A canonical distribution related to hypersurfaces of the NK $\mathbf{S}^3\times\mathbf{S}^3$}\label{sect:2.3}~

In order for choosing an appropriate local orthonormal frame of the NK $\mathbf{S}^3\times\mathbf{S}^3$
along its hypersurface $M$, following that in \cite{H-Y-Z2} we consider
$$
\mathfrak{D}(p):={\rm Span}\,\{\xi(p),U(p),P\xi(p),PU(p)\},\ \ p\in M.
$$

It is easily seen that, since $P$ is anti-commutative with $J$, $\mathfrak{D}$ defines
a distribution on $M$ with dimension exact $2$ or $4$, and that it is invariant under
both $J$ and $P$. Along $M$, let $\mathfrak{D}^\perp$ denote the distribution in
$T(\mathbf{S}^3\times\mathbf{S}^3)$ that is orthogonal to $\mathfrak{D}$ at each
$p\in M$. For later's purpose, we shall make some remarks about $\dim\mathfrak{D}$:

(1) If $\dim\mathfrak{D}=4$ holds in an open set, then there exists a
unit tangent vector field $e_1\in \{U\}^\bot$ and functions
$a,b,c$ with $c>0$ such that
\begin{equation}\label{eqn:2.21}
P\xi=a\xi+bU+ce_1,\ \ a^2+b^2+c^2=1.
\end{equation}

Put $e_2=Je_1$. Moreover, from the fact $\dim\,\mathfrak{D}^\perp=2$ and that
$\mathfrak{D}^\perp$ is invariant under the action of both $J$ and $P$, we can
choose a local unit vector field $e_3\in\mathfrak{D}^\perp$ such that $Pe_3=e_3$.
Now, putting $e_4=Je_3$ and $e_5=U$, then $\{e_i\}_{i=1}^5$ is a well-defined
orthonormal basis of $TM$ and, acting by $P$, it has the following properties:
\begin{equation}\label{eqn:2.22}
\left\{
\begin{aligned}
&P\xi=a\xi+ce_1+be_5,\ \ \ \ \ Pe_1=c\xi-ae_1-be_2,\\
&Pe_2=ce_5-be_1+ae_2,\ \ Pe_3=e_3,\\
&Pe_4=-e_4,\ \hspace{18mm} Pe_5=b\xi+ce_2-ae_5.
\end{aligned}\right.
\end{equation}

(2) If $\dim\mathfrak{D}=2$ holds in an open set, then $P\{U\}^\perp=\{U\}^\perp$
and we can write
\begin{equation}\label{eqn:2.23}
P\xi=a\xi+bU,\ \ a^2+b^2=1.
\end{equation}

Now, $\mathfrak{D}^\perp$ is a $4$-dimensional distribution that is
invariant under the action of both $J$ and $P$. Hence, we can choose
unit vector fields $e_1,\, e_3\in\mathfrak{D}^\perp$ such that
$Pe_1=e_1,\, Pe_3=e_3$. Put $e_2=Je_1,\,e_4=Je_3$ and $e_5=U$. In
this way, we obtain an orthonormal basis $\{e_i\}_{i=1}^5$ of $TM$.
However, we would remark that such choice of $\{e_1, e_3\}$ (resp.
$\{e_2, e_4\}$) is unique up to an orthogonal transformation.

%==================================================
\section{The proof of Theorem \ref{thm:1.1}}\label{sect:3}
Suppose on the contrary that $M$ is a Hopf hypersurface in the NK $\mathbf{S}^3
\times\mathbf{S}^3$ which has two distinct principal curvatures, say $\alpha$ and
$\lambda$, with $AU=\alpha U$. We denote by $V_\alpha$ and $V_\lambda$ the corresponding
eigen-distributions. By the continuity of the principal curvature functions, we know
that the dimensions $(\dim V_\alpha,\dim V_\lambda)$ of the two eigen-distributions
have to be one of the four possibilities: (1,4), (2,3), (3,2) and (4,1).

Next, we separate the proof of Theorem \ref{thm:1.1} into the proofs of two lemmas,
depending on the dimension of $\mathfrak{D}$.
\begin{lemma}\label{lemma:3.1}
The case $\dim\mathfrak{D}=4$ does not occur.
\end{lemma}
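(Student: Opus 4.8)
The plan is to derive a contradiction from the assumption that $\dim\mathfrak{D}=4$ on some open set while $M$ has exactly two distinct principal curvatures $\alpha,\lambda$. Since $\dim\mathfrak{D}=4$, I can invoke the adapted orthonormal frame $\{e_i\}_{i=1}^5$ of \eqref{eqn:2.22}, with $U=e_5$ and $e_2=Je_1$, $e_4=Je_3$, together with the structure functions $a,b,c$ satisfying $a^2+b^2+c^2=1$ and $c>0$. The key analytic tool is Lemma \ref{lemma:2.1}: its identity \eqref{eqn:2.19} constrains the shape operator $A$ on $\{U\}^\perp$ in terms of $G$, $P$ and $\alpha$. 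My first step is to compute the relevant values of $G(e_i,\xi)$; using \eqref{eqn:2.4}, \eqref{eqn:2.5} and \eqref{eqn:2.6} these are expressible through the frame and the functions $a,b,c$, and substituting $P\xi=a\xi+ce_1+be_5$ from \eqref{eqn:2.22} lets me write each $G(e_i,\xi)$ explicitly in the basis $\{e_i,\xi\}$.

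With the two-distinct-curvature hypothesis, the shape operator restricted to $\{U\}^\perp$ has eigenvalue $\lambda$ on a codimension-$(\dim V_\alpha-1)$ subspace. I would split into the four dimension cases $(1,4),(2,3),(3,2),(4,1)$. The cleanest route is to evaluate \eqref{eqn:2.19} on pairs $(X,Y)$ drawn from $\{e_1,e_2,e_3,e_4\}$. Because $A$ has at most two eigenvalues, the terms $g(A\phi AX,Y)$ and $g((A\phi+\phi A)X,Y)$ collapse: when both $X$ and $Y$ lie in a single eigenspace, $\phi$ either preserves or swaps eigenspaces, so these quadratic-in-$A$ terms reduce to scalar multiples determined by $\alpha$ and $\lambda$. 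Plugging the explicit $G(e_i,\xi)$ into the left- and right-hand sides of \eqref{eqn:2.19} then yields a finite system of algebraic equations relating $\alpha$, $\lambda$, $a$, $b$, $c$. I expect, for instance, the pair $(e_3,e_4)$ (on which $Pe_3=e_3$, $Pe_4=-e_4$ decouple cleanly) to produce a rigid scalar relation forcing a specific value of $\lambda$ in terms of $\alpha$, while the pair $(e_1,e_2)$ brings in $a,b,c$ nontrivially.

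The decisive contradiction should come from comparing these relations: the equations arising from the $\mathfrak{D}$-directions $(e_1,e_2)$ and those from the $\mathfrak{D}^\perp$-directions $(e_3,e_4)$ will impose incompatible constraints on $\lambda-\alpha$, or force $c=0$, contradicting $c>0$. In parallel, I would exploit Lemma \ref{lemma:2.2} together with the Codazzi equation \eqref{eqn:2.18}: differentiating the eigenvalue relations $Ae_i=\lambda e_i$ and using \eqref{eqn:2.16} to compute $\nabla_X U=\phi AX-G(X,\xi)$ gives differential constraints on $a,b,c$ that must be consistent with the algebraic ones. Combining the algebraic output of \eqref{eqn:2.19} across all relevant index pairs with these integrability conditions is what rules out each of the four dimension possibilities.

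The main obstacle will be the case-by-case bookkeeping in \eqref{eqn:2.19}: the term $g((\alpha I-A)G(X,\xi),Y)+g(G((\alpha I-A)X,\xi),Y)$ mixes the shape operator with $G$, and since $G(e_i,\xi)$ need not lie in a single eigenspace of $A$, one cannot simply pull $(\alpha I-A)$ through. Keeping track of which components of $G(e_i,\xi)$ get multiplied by $(\alpha-\lambda)$ versus annihilated, across all four splittings and for each frame pair, is the delicate part; the risk is an inconsistent sign or a dropped $a,b,c$-term that spuriously removes the contradiction. I would organize this by first recording the full matrix of $G(e_i,e_j)$ and $G(e_i,\xi)$ in the frame \eqref{eqn:2.22}, then treating \eqref{eqn:2.19} as a bilinear form and reading off its independent components, so that the contradiction emerges uniformly rather than through ad hoc manipulation in each dimension case.
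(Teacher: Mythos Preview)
Your proposal has the right high-level structure---split on $(\dim V_\alpha,\dim V_\lambda)$ and exploit \eqref{eqn:2.19}---but two concrete gaps would stall the argument.

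First, the frame $\{e_i\}$ of \eqref{eqn:2.22} is adapted to $P$ and $J$, \emph{not} to $A$. There is no reason the $e_i$ should be eigenvectors of the shape operator, so when you evaluate \eqref{eqn:2.19} on pairs from $\{e_1,\dots,e_4\}$ the terms $g(A\phi AX,Y)$ and $g((\alpha I-A)G(X,\xi),Y)$ do \emph{not} collapse to scalars in $\alpha,\lambda$; you would be carrying the full unknown matrix of $A$ in this basis. The paper works instead with an eigenframe $\{X_i\}$ of $A$ and writes $X_i=\sum_j a_{ij}e_j$ only when the $P$-dependent terms must be computed. Relatedly, your assertion that ``$\phi$ either preserves or swaps eigenspaces'' is false in general: in the $(3,2)$ case the angle $\theta=|g(JX_1,X_2)|$ for an orthonormal basis $\{X_1,X_2\}$ of $V_\alpha\cap\{U\}^\perp$ is a genuine invariant ranging over $[0,1]$, and the generic subcase $0<\theta<1$ cannot be closed by algebra in \eqref{eqn:2.19} alone---the paper needs the Codazzi equation to produce a $4\times4$ matrix identity $BC=D+E$ and then a determinant argument on a further $4\times4$ matrix $F$ to reach the contradiction.

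Second, you invoke Lemma~\ref{lemma:2.2}, but its hypothesis $P\{U\}^\perp=\{U\}^\perp$ is equivalent to $\dim\mathfrak{D}=2$. When $\dim\mathfrak{D}=4$ one has $c>0$ in \eqref{eqn:2.21}, so by \eqref{eqn:2.22} $Pe_1=c\xi-ae_1-be_2$ has a normal component and $P$ does not preserve $\{U\}^\perp$. Lemma~\ref{lemma:2.2} therefore contributes nothing here, and there is no constancy of $\alpha$ available to feed into your differential constraints.
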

\begin{proof}
To argue by contradiction we assume that $\dim\mathfrak{D}=4$ does hold on an open set.
Now we check each possibility of $(\dim V_\alpha,\dim V_\lambda)$.

\vskip 1mm

{\bf (i) $(\dim V_\alpha,\dim V_\lambda)=(1,4)$ on $M$.}

In this case, it is easy to see that $A\phi=\phi A$ holds. This is impossible
because, according to Theorem 4.1 of \cite{H-Y-Z}, hypersurfaces satisfying
$A\phi=\phi A$ must have three distinct principal curvatures.

\vskip 1mm

{\bf (ii) $(\dim V_\alpha,\dim V_\lambda)=(2,3)$ on $M$.}

In this case, we can take a local orthonormal frame field $\{X_{i}\}_{i=1}^{5}$
such that
$$
AX_i=\alpha X_i,\ i=1,5;\ \ \ \ AX_j=\lambda X_j,\ j=2,3,4,
$$
where $X_2=JX_1,X_4=JX_3,X_5=U$. Then by using \eqref{eqn:2.3}--\eqref{eqn:2.6} we get
\begin{equation}\label{eqn:3.1}
G(X_1,X_4)=G(X_2,X_3)=-JG(X_1,X_3),\ \ g(G(X_1,X_3),X_i)=0\ \ {\rm for}\ \ 1\leq i\leq 4,
\end{equation}
\begin{equation}\label{eqn:3.2}
g(G(X_1,X_3),G(X_1,X_3))=\tfrac{1}{3}.
\end{equation}

Let $\{e_{i}\}_{i=1}^{5}$ be the orthonormal basis as described in \eqref{eqn:2.22}. Then
$$
X_1=m e_1+n e_2+u e_3+v e_4,\ \ X_3=-u e_1+v e_2+m e_3-n e_4,
$$
for some functions $m,n,u,v$; and
$$
X_2=-n e_1+m e_2-v e_3+u e_4,\ \ X_4=-v e_1-u e_2+n e_3+m e_4.
$$

Now, taking in \eqref{eqn:2.19}, respectively, $(X,Y)=(X_1,X_3),\ (X_1,X_4),\ (X_2,X_3)$, $(X_2,X_4)$,
we can obtain
\begin{gather}
\tfrac{2}{3}c^2mv+\tfrac{2}{3}c^{2}nu=(\lambda-\alpha)g(G(X_1,\xi),X_3),\label{eqn:3.3}\\
-\tfrac{2}{3}c^2mu+\tfrac{2}{3}c^{2}nv=(\lambda-\alpha)g(G(X_1,\xi),X_4),\label{eqn:3.4}\\
-\tfrac{2}{3}c^2nv+\tfrac{2}{3}c^{2}mu=2(\lambda-\alpha)g(G(X_2,\xi),X_3),\label{eqn:3.5}\\
\tfrac{2}{3}c^2nu+\tfrac{2}{3}c^{2}mv=2(\lambda-\alpha)g(G(X_2,\xi),X_4).\label{eqn:3.6}
\end{gather}
From \eqref{eqn:3.4} and \eqref{eqn:3.5}, and respectively \eqref{eqn:3.3} and \eqref{eqn:3.6},
we deduce that
$$
g(G(X_1,X_3),U)=0,\ \ \ g(G(X_1,X_3),\xi)=0.
$$
This combining with \eqref{eqn:3.1} implies that $G(X_1,X_3)=0$, a contradiction to \eqref{eqn:3.2}.

\vskip 1mm

{\bf (iii) $(\dim V_\alpha,\dim V_\lambda)=(3,2)$ on $M$.}

In this case, as $U\in V_\alpha$, we have $\dim (V_\alpha\cap\{U\}^\perp)=\dim V_\lambda=2$.
For an orthonormal basis $\{X_1,X_2\}$ of $V_\alpha\cap\{U\}^\perp$ we consider $|g(JX_1,X_2)|$,
which is obviously independent of the choice of $\{X_1,X_2\}$, thus gives a well-defined function
$\theta:=|g(JX_1,X_2)|$ on $M$, with $0\leq\theta\leq1$. Since our concern is only local,
in order to prove that Case (iii) does not occur, we are sufficient to show that the
following three subcases do not occur on $M$.

\vskip 1mm

{\bf (iii)-(1)}. $0<\theta<1$.

In this subcase, we can take a local orthonormal frame field $\{X_{i}\}_{i=1}^{5}$ of $M$ such that
$$
AX_1=\alpha X_1,\ AX_2=\alpha X_2,\ AX_3=\lambda X_3,\ AX_4=\lambda X_4,\ X_5=U,
$$
where $X_3=(JX_1-\theta X_2)/\sqrt{1-\theta^2},\ X_4=(JX_2+\theta X_1)/\sqrt{1-\theta^2}$ and $\theta=g(JX_1,X_2)$.

Moreover, direct calculations give the following relations:
\begin{equation}\label{eqn:3.7}
\left\{
\begin{aligned}
&JX_1=\sqrt{1-\theta^2}X_3+\theta X_2,\ \ JX_2=\sqrt{1-\theta^2}X_4-\theta X_1,\\[-1mm]
&JX_3=-\sqrt{1-\theta^2}X_1-\theta X_4,\ \ JX_4=-\sqrt{1-\theta^2}X_2+\theta X_3,\\[-1mm]
&g(JX_1,X_2)=-g(JX_3,X_4)=\theta,\ \ g(JX_1,X_3)=g(JX_2,X_4)=\sqrt{1-\theta^2},\\[-1mm]
&g(JX_1,X_4)=g(JX_2,X_3)=0,\ \ G(X_3,X_4)=-G(X_1,X_2),\\
&G(X_1,X_3)=\tfrac{-\theta}{\sqrt{1-\theta^2}}G(X_1,X_2),\ \
G(X_1,X_4)=\tfrac{-1}{\sqrt{1-\theta^2}}JG(X_1,X_2),\\
&G(X_2,X_3)=\tfrac{1}{\sqrt{1-\theta^2}}JG(X_1,X_2),\ \
G(X_2,X_4)=\tfrac{-\theta}{\sqrt{1-\theta^2}}G(X_1,X_2).
\end{aligned}\right.
\end{equation}

Let $\{e_{i}\}_{i=1}^{5}$ be the orthonormal basis as described in \eqref{eqn:2.22}
and assume that
$$
X_{i}=\sum_{j=1} ^4 a_{ij}e_{j}, \ \ 1\le i\le4.
$$
Then, by the definition of $X_3$ and $X_4$, we can derive
\begin{equation}\label{eqn:3.8}
\left\{
\begin{aligned}
&a_{31}=\tfrac{-a_{12}-a_{21}\theta}{\sqrt{1-\theta^2}},\
a_{32}=\tfrac{a_{11}-a_{22}\theta}{\sqrt{1-\theta^2}},\
a_{33}=\tfrac{-a_{14}-a_{23}\theta}{\sqrt{1-\theta^2}},\
a_{34}=\tfrac{a_{13}-a_{24}\theta}{\sqrt{1-\theta^2}};\\
&
a_{41}=\tfrac{-a_{22}+a_{11}\theta}{\sqrt{1-\theta^2}},\
a_{42}=\tfrac{a_{21}+a_{12}\theta}{\sqrt{1-\theta^2}},\
a_{43}=\tfrac{-a_{24}+a_{13}\theta}{\sqrt{1-\theta^2}},\
a_{44}=\tfrac{a_{23}+a_{14}\theta}{\sqrt{1-\theta^2}}.
\end{aligned}
\right.
\end{equation}

Taking, in \eqref{eqn:2.19}, $(X,Y)=(X_i,X_j)$ for $1\leq i<j\leq4$, and
using \eqref{eqn:3.7} and \eqref{eqn:2.22}, we get
\begin{equation}\label{eqn:3.9}
-\tfrac{1}{6}\theta+\tfrac{2}{3}c^{2}(a_{11}a_{22}-a_{12}a_{21})=0,
\end{equation}
\begin{equation}\label{eqn:3.10}
\tfrac{2}{3\sqrt{1-\theta^2}}c^2(a_{11}a_{21}+a_{12}a_{22})
+\tfrac{1}{\sqrt{1-\theta^2}}(\alpha-\lambda)g(G(X_1,X_2),U)=0,
\end{equation}
\begin{equation}\label{eqn:3.11}
\tfrac{2}{3\sqrt{1-\theta^2}}c^2(a_{11}a_{21}+a_{12}a_{22})
-\tfrac{1}{\sqrt{1-\theta^2}}(\alpha-\lambda)g(G(X_1,X_2),U)=0,
\end{equation}
\begin{equation}\label{eqn:3.12}
\begin{aligned}
\tfrac{2}{3\sqrt{1-\theta^2}}c^2(&-a_{11}^2-a_{12}^2+(a_{22}a_{11}-a_{21}a_{12})\theta)+\tfrac{\sqrt{1-\theta^2}}{6}\\[-1mm]
&-\tfrac{\theta}{\sqrt{1-\theta^2}}(\alpha-\lambda)g(G(X_1,X_2),\xi)+\alpha(\alpha-\lambda)\sqrt{1-\theta^2}=0,
\end{aligned}
\end{equation}
\begin{equation}\label{eqn:3.13}
\begin{aligned}
\tfrac{2}{3\sqrt{1-\theta^2}}c^2(&-a_{21}^2-a_{22}^2+(a_{22}a_{11}-a_{21}a_{12})\theta)+\tfrac{\sqrt{1-\theta^2}}{6}\\[-1mm]
&-\tfrac{\theta}{\sqrt{1-\theta^2}}(\alpha-\lambda)g(G(X_1,X_2),\xi)
+\alpha(\alpha-\lambda)\sqrt{1-\theta^2}=0,
\end{aligned}
\end{equation}
\begin{equation}\label{eqn:3.14}
\begin{aligned}
\tfrac{2}{3(1-\theta^2)}c^2\big[&a_{21}a_{12}-a_{11}a_{22}+(a_{22}^2+a_{11}^2+a_{21}^2+a_{12}^2)\theta
+(a_{12}a_{21}-a_{11}a_{22})\theta^2\big]\\[-1mm]
&-\tfrac{\theta}{6}-2(\alpha-\lambda)g(G(X_1,X_2),\xi)-2\lambda(\alpha-\lambda)\theta=0.
\end{aligned}
\end{equation}

From \eqref{eqn:3.10}, \eqref{eqn:3.11} and $g(X_1,X_2)=0$, we have
$$
g(G(X_1,X_2),U)=0,\ \ a_{11}a_{21}+a_{12}a_{22}=0,\ \ a_{13}a_{23}+a_{14}a_{24}=0.
$$

From \eqref{eqn:3.9}, \eqref{eqn:3.12}, \eqref{eqn:3.13} and $g(X_1,X_1)=g(X_2,X_2)=1$, we have
$$
a_{11}^2+a_{12}^2=a_{21}^2+a_{22}^2\not=0,\ \ a_{13}^2+a_{14}^2=a_{23}^2+a_{24}^2.
$$
Thus, we can write
$$
\left\{
\begin{aligned}
&a_{11}=\sqrt{a_{11}^2+a_{12}^2}\cos \omega_{1},\ \ a_{12}=\sqrt{a_{11}^2+a_{12}^2}\sin \omega_{1};\\
&a_{21}=\sqrt{a_{11}^2+a_{12}^2}\cos \omega_{2},\ \ a_{22}=\sqrt{a_{11}^2+a_{12}^2}\sin\omega_{2}.
\end{aligned}
\right.
$$
Then the fact $0=a_{11}a_{21}+a_{12}a_{22}=(a_{11}^2+a_{12}^2)\cos(\omega_{1}-\omega_{2})$
implies that $\omega_{1}-\omega_{2}=\tfrac{\pi}{2}(2k+1)$ for $k\in\mathbb Z$. Hence
$(a_{21},a_{22})=\pm\,(a_{12},-a_{11})$. On the other hand, \eqref{eqn:3.9} implies that
$a_{11}a_{22}-a_{12}a_{21}=\tfrac{\theta}{4c^2}>0$, so it should be that $(a_{21},a_{22})=-(a_{12},-a_{11})$.

Similarly, we can prove that $(a_{23},a_{24})=(a_{14},-a_{13})$.
It follows that $a_{11}^2+a_{12}^2=\tfrac{\theta}{4c^2}$ and $a_{13}^2+a_{14}^2=1-\tfrac{\theta}{4c^2}$.
On the other hand, by definition, we can finally get
$$
\theta=\sum a_{1i}a_{2j}g(Je_i,e_j)=a_{11}a_{22}-a_{12}a_{21}+a_{13}a_{24}-a_{14}a_{23}
=\tfrac{\theta}{2c^2}-1,
$$
and thus $\theta=\tfrac{2c^2}{1-2c^2}$.

Next, from the fact $g(G(X_1,X_2),X_i)=0$ for $1\leq i \leq 5$ and that, by \eqref{eqn:2.6},
$$
g(G(X_1,X_2),G(X_1,X_2))=\tfrac13(1-\theta^2),
$$
we have $G(X_1,X_2)=\pm\sqrt{(1-\theta^2)/3}\,\xi$. Since the discussion is totally
similar, we just consider the case $G(X_1,X_2)=\sqrt{(1-\theta^2)/3}\,\xi$.
We calculate the connections $\{\nabla_{X_{i}}X_{j}\}$ so that we can apply
for the Codazzi equations.

Put $\nabla_{X_i}X_j=\sum \Gamma_{ij}^{k}X_{k}$ with $\Gamma_{ij}^{k}=-\Gamma_{ik}^{j}$, $1\le i,j,k\le 5$.

Then, on the one hand, by definition and the Gauss-Weingarten formulas, we have
\begin{equation*}\label{eq:cs1}
G(X_1,\xi)=-\sum_{i=1}^5 \Gamma_{15}^{i}X_i+\alpha JX_1.
\end{equation*}
On the other hand, using $G(X_1,\xi)=\sum_ig(G(X_1,\xi),X_i)X_i$, we easily get
$$
G(X_1,\xi)=-\sqrt{\tfrac{1-\theta^2}{3}}X_2+\tfrac{\sqrt{3}}{3}\theta X_3.
$$

From the above calculations and \eqref{eqn:3.7}, it follows that
\begin{equation}\label{eqn:3.15}
\Gamma_{15}^{1}=0,\ \ \Gamma_{15}^{2}=\alpha\theta+\sqrt{\tfrac{1-\theta^2}{3}},\ \
\Gamma_{15}^{3}=\alpha\sqrt{1-\theta^2}-\tfrac{\sqrt{3}}{3}\theta,\ \ \Gamma_{15}^{4}=0.
\end{equation}

Analogously, calculating $G(X_i,\xi)=(\tilde \nabla_{X_i} J)\xi$ for $2\leq i\leq 4$,
we can further obtain
\begin{equation}\label{eqn:3.16}
\left\{
\begin{aligned}
&\Gamma_{25}^{1}=-\alpha\theta-\sqrt{\tfrac{1-\theta^2}{3}},\ \ \Gamma_{25}^{2}=0,\ \ \Gamma_{25}^{3}=0,\ \ \Gamma_{25}^{4}=\alpha\sqrt{1-\theta^2}-\tfrac{\sqrt{3}}{3}\theta,\\
&\Gamma_{35}^{1}=-\lambda\sqrt{1-\theta^2}+\tfrac{\sqrt{3}}{3}\theta,\ \ \Gamma_{35}^{2}=0,\ \ \Gamma_{35}^{3}=0,\ \ \Gamma_{35}^{4}=-\lambda\theta-\sqrt{\tfrac{1-\theta^2}{3}},\\
&\Gamma_{45}^{1}=0,\ \ \Gamma_{45}^{2}=-\lambda\sqrt{1-\theta^2}+\tfrac{\sqrt{3}}{3}\theta,\ \ \Gamma_{45}^{3}=\lambda\theta+\sqrt{\tfrac{1-\theta^2}{3}},\ \ \Gamma_{45}^{4}=0.
\end{aligned}\right.
\end{equation}

Now, we are ready to calculate $(\nabla_U A){e_i}-(\nabla_{e_i} A)U$ for $1\leq i\leq 4$.

On the one hand, using $e_{i}=\sum_{j=1}^4{a_{ji}}X_{j}$ and the preceding results
\eqref{eqn:3.15} and \eqref{eqn:3.16}, direct calculations give the ${\{U\}^\bot}$-components of
$(\nabla_U A){e_i}-(\nabla_{e_i} A)U$:
$$
\left(
\begin{aligned}
&(\nabla_UA)e_1-(\nabla_{e_1}A)U\\
&(\nabla_UA)e_2-(\nabla_{e_2}A)U\\
&(\nabla_UA)e_3-(\nabla_{e_3}A)U\\
&(\nabla_UA)e_4-(\nabla_{e_4}A)U
\end{aligned}
\right)_{{\{U\}^\bot}}=BC
\left(
\begin{aligned}
&X_1\\
&X_2\\
&X_3\\
&X_4
\end{aligned}
\right),
$$
where
$$
B=(a_{ij})^T=
\left(
  \begin{array}{cccc}
    a_{11} & -a_{12} & -a_{12}\sqrt{(1-\theta)/(1+\theta)} & -a_{11}\sqrt{(1-\theta)/(1+\theta)}\\[1mm]
    a_{12} & a_{11} & a_{11}\sqrt{(1-\theta)/(1+\theta)} & -a_{12}\sqrt{(1-\theta)/(1+\theta)}\\[1mm]
    a_{13} & a_{14} & -a_{14}\sqrt{(1+\theta)/(1-\theta)} & a_{13}\sqrt{(1+\theta)/(1-\theta)}\\[1mm]
    a_{14} & -a_{13} & a_{13}\sqrt{(1+\theta)/(1-\theta)} & a_{14}\sqrt{(1+\theta)/(1-\theta)}
  \end{array}
\right),
$$
$$
C=(C_{ij}):=
\left(
  \begin{array}{cccc}
    U(\alpha) & 0 & (\alpha-\lambda)(\Gamma_{51}^3-\Gamma_{15}^3) & (\alpha-\lambda)\Gamma_{51}^4 \\
    0 & U(\alpha) & (\alpha-\lambda)\Gamma_{52}^3 & (\alpha-\lambda)(\Gamma_{52}^4-\Gamma_{25}^4) \\
    (\lambda-\alpha)\Gamma_{53}^1 & (\lambda-\alpha)\Gamma_{53}^2 & U(\lambda) & (\lambda-\alpha)\Gamma_{35}^4 \\
    (\lambda-\alpha)\Gamma_{54}^1 & (\lambda-\alpha)\Gamma_{54}^2 & (\lambda-\alpha)\Gamma_{45}^3 & U(\lambda)
  \end{array}
\right).
$$

On the other hand, using the Codazzi equation \eqref{eqn:2.18}, $e_{i}=\sum_{j=1}^4{a_{ji}}X_{j}$
and \eqref{eqn:2.22}, another calculation for the ${\{U\}^\bot}$-components of
$(\nabla_U A)e_{i}-(\nabla_{e_{i}} A)U$ can be carried out to obtain:
$$
\left(
\begin{aligned}
&(\nabla_UA)e_1-(\nabla_{e_1}A)U\\
&(\nabla_UA)e_2-(\nabla_{e_2}A)U\\
&(\nabla_UA)e_3-(\nabla_{e_3}A)U\\
&(\nabla_UA)e_4-(\nabla_{e_4}A)U
\end{aligned}
\right)_{{\{U\}^\bot}}=(D+E)
\left(
\begin{aligned}
&X_1\\
&X_2\\
&X_3\\
&X_4
\end{aligned}
\right),
$$
where
$$
D=(D_{ij}):=
\left(
  \begin{array}{cccc}
    -\tfrac{2ab}{3}a_{11} & \tfrac{2ab}{3}a_{12} & \tfrac{2aba_{12}}{3}\sqrt{\tfrac{1-\theta}{1+\theta}} & \tfrac{2aba_{11}}{3}\sqrt{\tfrac{1-\theta}{1+\theta}} \\[1mm]
    \tfrac{2ab}{3}a_{12} & \tfrac{2ab}{3}a_{11} & \tfrac{2aba_{11}}{3}\sqrt{\tfrac{1-\theta}{1+\theta}} & -\tfrac{2aba_{12}}{3}\sqrt{\tfrac{1-\theta}{1+\theta}} \\[1mm]
    \tfrac{b}{3}a_{13} & \tfrac{b}{3}a_{14} & -\tfrac{ba_{14}}{3}\sqrt{\tfrac{1+\theta}{1-\theta}} & \tfrac{ba_{13}}{3}\sqrt{\tfrac{1+\theta}{1-\theta}} \\[1mm]
    -\tfrac{b}{3}a_{14} & \tfrac{b}{3}a_{13} & -\tfrac{ba_{13}}{3}\sqrt{\tfrac{1+\theta}{1-\theta}} & -\tfrac{ba_{14}}{3}\sqrt{\tfrac{1+\theta}{1-\theta}}
  \end{array}
\right),
$$
$$
E=(E_{ij}):=
\left(
  \begin{array}{cccc}
     \tfrac{8a^2-3}{12}a_{12} & \tfrac{8a^2-3}{12}a_{11} & \tfrac{(8a^2-3)a_{11}}{12}\sqrt{\tfrac{1-\theta}{1+\theta}} & -\tfrac{(8a^2-3)a_{12}}{12}\sqrt{\tfrac{1-\theta}{1+\theta}}\\
     \tfrac{3-8b^2}{12}a_{11} & \tfrac{8b^2-3}{12}a_{12} & \tfrac{(8b^2-3)a_{12}}{12}\sqrt{\tfrac{1-\theta}{1+\theta}} & \tfrac{(8b^2-3)a_{11}}{12}\sqrt{\tfrac{1-\theta}{1+\theta}}\\
     \tfrac{1-4a}{12}a_{14} & -\tfrac{1-4a}{12}a_{13} & \tfrac{(1-4a)a_{13}}{12}\sqrt{\tfrac{1+\theta}{1-\theta}} &  \tfrac{(1-4a)a_{14}}{12}\sqrt{\tfrac{1+\theta}{1-\theta}}\\
     -\tfrac{1+4a}{12}a_{13} & -\tfrac{1+4a}{12}a_{14} & \tfrac{(1+4a)a_{14}}{12}\sqrt{\tfrac{1+\theta}{1-\theta}} & -\tfrac{(1+4a)a_{13}}{12}\sqrt{\tfrac{1+\theta}{1-\theta}}
  \end{array}
\right).
$$

In this way, we obtain the equation $BC=D+E$. This can be written in equivalent form:
$C_{ij}=\sum_ka_{ik}(D_{kj}+E_{kj})$ for $1\le i,j\le4$. Then, since by \eqref{eqn:3.16} we have
$$
C_{11}-C_{22}=0, \ C_{12}+C_{21}=0,\ C_{33}-C_{44}=0,\ C_{34}+C_{43}=0,
$$
it follows that $LF=0$, where $L=(a_{11}^2-a_{12}^2, \, a_{13}^2-a_{14}^2, \, a_{11}a_{12},\, a_{13}a_{14})$, and
$$
F=\left(
  \begin{array}{cccc}
    -2ab & a^2-b^2 & (b^2-a^2)(1-\theta)^2 & 2ab(1-\theta)^2\\
    b & a & -a(1+\theta)^2 & -b(1+\theta)^2\\
    2(a^2-b^2) & 4ab & -4ab(1-\theta)^2 & 2(b^2-a^2)(1-\theta)^2\\
    -2a & 2b & -2b(1+\theta)^2 & 2a(1+\theta)^2
  \end{array}\right).
$$
%$$
%F=\left(
%  \begin{array}{cccc}
%    \tfrac{-2ab}{3} & \tfrac{2a^2-2b^2}{3} & \tfrac{-2a^2+2b^2}{3}(1-\theta)^2 & \tfrac{2ab}{3}(1-\theta)^2\\
%    \tfrac{b}{3} & \tfrac{2a}{3} & \tfrac{-2a}{3}(1+\theta)^2 & \tfrac{-b}{3}(1+\theta)^2\\
%    \tfrac{2a^2-2b^2}{3} & \tfrac{8ab}{3} & \tfrac{-8ab}{3}(1-\theta)^2 & \tfrac{-2a^2+2b^2}{3}(1-\theta)^2\\
%    \tfrac{-2a}{3} & \tfrac{4b}{3} & \tfrac{-4b}{3}(1+\theta)^2 & \tfrac{2a}{3}(1+\theta)^2
%  \end{array}\right).
%$$

Now, direct calculation gives that $\det F=-64\theta^2(a^2+b^2)^3$.

If $\det F=0$, then $c=1$ and this contradicts to $\theta=\tfrac{2c^2}{1-2c^2}\in (0,1)$.
If $\det F\neq0$, then $L=0$ and thus $a_{11}=a_{12}=a_{13}=a_{14}=0$, which is also a contradiction.

In summary, we have shown that {\bf (iii)-(1)} does not occur.

\vskip 1mm

{\bf (iii)-(2)}. $\theta=0$.

In this case, we have $J\{V_\alpha\cap\{U\}^\perp\}=V_\lambda$. Take a local orthonormal frame field
$\{X_{i}\}_{i=1}^{5}$ of $M$ such that
$$
AX_1=\alpha X_1,\ AX_2=\alpha X_2,\ AX_3=\lambda X_3,\ AX_4=\lambda X_4,\ AX_5=\alpha X_5,
$$
where $X_3=JX_1,\ X_4=JX_2,\ X_5=U$. It follows that
$$
g(G(X_1,X_2),X_i)=0,\ 1\leq i\le 4;\ \ \ g(G(X_1,X_2),G(X_1,X_2))=\tfrac{1}{3}.
$$
Assume that $X_{i}=\sum_{j=1} ^4 a_{ij}e_{j}$ for $1\leq i\leq 4$. Then taking in \eqref{eqn:2.19}
that $(X,Y)=(X_i,X_j)$ for each $1\le i,j\le4$, we can still get the equations from \eqref{eqn:3.9}
up to \eqref{eqn:3.14} but with $\theta=0$.
From \eqref{eqn:3.9} and \eqref{eqn:3.14} corresponding to $\theta=0$, we get $g(G(X_1,X_2),\xi)=0$.
Then, by \eqref{eqn:3.10} and \eqref{eqn:3.11}, we obtain $g(G(X_1,X_2),U)=0$.

It follows that $G(X_1,X_2)=0$, a contradiction to $g(G(X_1,X_2),G(X_1,X_2))=\tfrac{1}{3}$.

\vskip 1mm

{\bf (iii)-(3)}. $\theta=1$.

In this case, both $V_\alpha\cap\{U\}^\perp$ and $V_\lambda$ are $J$-invariant. Then, it is easily
seen that $M$ satisfies $A\phi=\phi A$, and according to Theorem 4.1 of \cite{H-Y-Z} once more we
get as desired a contradiction.

\vskip 1mm

{\bf (iv) $(\dim V_\alpha,\dim V_\lambda)=(4,1)$ on $M$.}

In this case, we can take a local orthonormal basis $\{X_{i}\}_{i=1}^{5}$ such that
$$
AX_1=\lambda X_1,\ AX_2=\alpha X_2,\ AX_3=\alpha X_3,\ AX_4=\alpha X_4,\ AX_5=\alpha X_5,
$$
where $X_2=JX_1,\ X_4=JX_3,\ X_5=U$. Then as preceding we have
\begin{equation}\label{eqn:3.17}
g(G(X_1,X_3),X_i)=0,\ 1\leq i\leq4;\ \ |G(X_1,X_3)|^2=\tfrac{1}{3}.
\end{equation}

Let $\{e_{i}\}_{i=1}^{5}$ be the orthonormal basis as described in \eqref{eqn:2.22}
and assume, for some functions $m,n,u,v$ that $X_1=m e_1+n e_2+ue_3+ve_4,\ \ X_3=-ue_1+ve_2+m e_3-n e_4$.
Then, by definition, we have
$$
X_2=-n e_1+m e_2-ve_3+ue_4,\ \ X_4=-ve_1-ue_2+n e_3+m e_4.
$$
Taking in \eqref{eqn:2.19}, respectively, $(X,Y)=(X_1,X_3),(X_1,X_4),(X_3,X_2),(X_4,X_2)$, we get
\begin{gather}
\tfrac{2}{3}c^2mv+\tfrac{2}{3}c^{2}nu=(\lambda-\alpha)g(G(X_1,\xi),X_3),\label{eqn:3.18}\\
-\tfrac{2}{3}c^2mu+\tfrac{2}{3}c^{2}nv=(\lambda-\alpha)g(G(X_1,\xi),X_4),\label{eqn:3.19}\\
-\tfrac{2}{3}c^2mu+\tfrac{2}{3}c^{2}nv=0,\label{eqn:3.20}\\
\tfrac{2}{3}c^2nu+\tfrac{2}{3}c^{2}mv=0.\label{eqn:3.21}
\end{gather}
From these equations we immediately obtain
$$
g(G(X_1,X_3),U)=0,\ \ \ g(G(X_1,X_3),\xi)=0.
$$
This together with \eqref{eqn:3.17} gives $G(X_1,X_3)=0$, a contradiction to $|G(X_1,X_3)|^2=\tfrac{1}{3}$.

This finally completes the proof of Lemma \ref{lemma:3.1}.
\end{proof}

\begin{lemma}\label{lemma:3.2}
The case $\dim\mathfrak{D}=2$ does not occur.
\end{lemma}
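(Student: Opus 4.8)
The plan is to mirror the four-case analysis of Lemma \ref{lemma:3.1} according to $(\dim V_\alpha,\dim V_\lambda)\in\{(1,4),(2,3),(3,2),(4,1)\}$, while exploiting the two features special to $\dim\mathfrak{D}=2$. Since $\dim\mathfrak{D}=2$ forces $P\{U\}^\perp=\{U\}^\perp$, Lemma \ref{lemma:2.2} gives that $\alpha$ is constant; more decisively, because $P\xi$ and $PU$ both lie in $\mathfrak{D}=\mathrm{Span}\{\xi,U\}$ by \eqref{eqn:2.23}, one has $g(PX,\xi)=g(X,P\xi)=0$ and $g(PX,U)=g(X,PU)=0$ for every $X\in\{U\}^\perp$. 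Hence the whole $P$-bracket on the left-hand side of \eqref{eqn:2.19} vanishes, and for $X,Y\in\{U\}^\perp$ the identity \eqref{eqn:2.19} reduces to
\begin{equation*}
\tfrac16 g(\phi X,Y)=g((\alpha I-A)G(X,\xi),Y)+g(G((\alpha I-A)X,\xi),Y)-\alpha g((A\phi+\phi A)X,Y)+2g(A\phi AX,Y).
\end{equation*}
This is the key simplification: the $c^2$-terms that dominated the computations \eqref{eqn:3.3}--\eqref{eqn:3.14} are now absent, so I expect no Codazzi/determinant argument to be needed and the lemma to be shorter than Lemma \ref{lemma:3.1}.

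The case $(1,4)$ is identical to before: then $V_\lambda=\{U\}^\perp$ is $\phi$-invariant, so $A\phi=\phi A$, which contradicts Theorem 4.1 of \cite{H-Y-Z}. For the cases $(2,3)$ and $(4,1)$ I would use the same $J$-adapted eigenframes $\{X_i\}_{i=1}^5$ with $X_5=U$, in which the lone $\{U\}^\perp$-eigenvector of one curvature is paired by $J$ with an eigenvector of the other (this is forced, since $J$ carries a one-dimensional eigenspace in $\{U\}^\perp$ into its orthogonal complement). For such a frame \eqref{eqn:2.3}--\eqref{eqn:2.6} give $g(G(X_1,X_3),X_i)=0$ for $1\le i\le4$ and $|G(X_1,X_3)|^2=\tfrac13$. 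Feeding the pairs $(X_1,X_3),(X_1,X_4),(X_2,X_3),(X_2,X_4)$ into the reduced identity, each left-hand side is a vanishing $\phi$-inner product while each right-hand side collapses to $(\alpha-\lambda)$ (or $2(\alpha-\lambda)$) times a single component $g(G(X_i,\xi),X_j)$; since $\alpha\neq\lambda$, all these components vanish, which by \eqref{eqn:2.5} kills the $\xi$- and $U$-components of $G(X_1,X_3)$, forcing $G(X_1,X_3)=0$ and contradicting $|G(X_1,X_3)|^2=\tfrac13$.

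The case $(3,2)$ is where the simplification is most striking. Writing $\theta=|g(JX_1,X_2)|$ for an orthonormal basis $\{X_1,X_2\}$ of $V_\alpha\cap\{U\}^\perp$, I would first evaluate the reduced identity on the pair $(X_1,X_2)$ of $\alpha$-eigenvectors: the two $G$-terms vanish since $(\alpha I-A)X_1=(\alpha I-A)X_2=0$, while $-\alpha g((A\phi+\phi A)X_1,X_2)$ and $2g(A\phi AX_1,X_2)$ cancel, so the right-hand side is $0$; as the left-hand side equals $\tfrac16\theta$, this forces $\theta=0$. Thus the subcases $0<\theta<1$ and $\theta=1$ of Lemma \ref{lemma:3.1} cannot occur here, and in particular the delicate determinant computation of subcase (iii)-(1) is bypassed entirely. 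With $\theta=0$ one has $X_3=JX_1$ and $X_4=JX_2$ in $V_\lambda$; evaluating the reduced identity on $(X_3,X_4)$ and on $(X_1,X_4)$ yields, respectively, $g(G(X_1,X_2),\xi)=0$ and $g(G(X_1,X_2),U)=0$, which together with $g(G(X_1,X_2),X_i)=0$ for $1\le i\le4$ give $G(X_1,X_2)=0$, contradicting $|G(X_1,X_2)|^2=\tfrac13$.

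The main obstacle is bookkeeping rather than conceptual: one must correctly identify, in each dimension case, the $J$-adapted eigenframe and then select the few pairs $(X_i,X_j)$ whose reduced identities isolate the $\xi$- and $U$-components of the relevant $G$, using \eqref{eqn:2.3}--\eqref{eqn:2.6} (in particular $G(X,JX)=0$ and $G(JX,JY)=-G(X,Y)$) to translate each $g(G(X_i,\xi),X_j)$ into a component of $G(X_1,X_3)$ or $G(X_1,X_2)$. Since the vanishing of the $P$-bracket removes all dependence on the auxiliary $P$-eigenframe $\{e_i\}$ of \eqref{eqn:2.23}, I expect the entire argument to run at the level of the $A$-eigenframe, with no appeal to the Codazzi equation.
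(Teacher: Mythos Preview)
Your proposal is correct and follows essentially the same route as the paper: both split into the four cases $(1,4),(2,3),(3,2),(4,1)$, dispatch $(1,4)$ via $A\phi=\phi A$ and Theorem~4.1 of \cite{H-Y-Z}, and in the remaining cases exploit the vanishing of the $P$-bracket in \eqref{eqn:2.19} (which the paper uses implicitly) together with a $J$-adapted eigenframe to force either $G(X_1,\xi)=0$ or $G(X_1,X_2)=0$, contradicting \eqref{eqn:2.6}. Your observation in case $(3,2)$ that evaluating \eqref{eqn:2.19} on two $\alpha$-eigenvectors immediately gives $\theta=0$ is exactly the paper's first step there (``$g(\phi X_1,X_2)=0$''), and the only cosmetic difference is that the paper phrases the final contradiction as $G(\tilde X_2,\xi)=0$ rather than your equivalent $G(X_1,X_2)=0$.
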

\begin{proof}
Suppose on the contrary that $\dim\mathfrak{D}=2$ does hold on $M$.

Then, we consider each possibility of the dimensions $(\dim V_\alpha,\dim V_\lambda)$.

\vskip 1mm

{\bf (i) $(\dim V_\alpha,\dim V_\lambda)=(1,4)$ on $M$.}

In this case, we can easily show that $M$ satisfies $A\phi=\phi A$. As before
by Theorem 4.1 in \cite{H-Y-Z} this is impossible.

\vskip 1mm

{\bf (ii) $(\dim V_\alpha,\dim V_\lambda)=(2,3)$ on $M$.}

In this case, we take a local orthonormal frame field $\{X_{i}\}_{i=1}^{5}$ of $M$ such that
$$
AX_1=\alpha X_1,\ AX_2=\lambda X_2,\ AX_3=\lambda X_3,\ AX_4=\lambda X_4,\ AX_5=\alpha X_5,
$$
where $X_2=JX_1,\ X_4=JX_3,\ X_5=U$. By \eqref{eqn:2.3}--\eqref{eqn:2.5}, $G(X_1,\xi)$ is orthogonal to ${\rm Span}\{\xi,U,X_1,X_2\}$,
so $AG(X_1,\xi)=\lambda G(X_1,\xi)$.
Then, taking $X=X_1$ in \eqref{eqn:2.19}, we can get
\begin{equation}\label{eqn:3.22}
(\alpha-\lambda)g(G(X_1,\xi),Y)=(\alpha^2-\alpha\lambda+\tfrac{1}{6})g(X_2,Y),\ \forall\, Y\in\{U\}^\perp.
\end{equation}

Notice that $g(X_2,X_3)=g(X_2,X_4)=0$ and $\alpha\neq\lambda$, so \eqref{eqn:3.22} implies that $G(X_1,\xi)=0$.
However, by \eqref{eqn:2.6} we have $|G(X_1,\xi)|^2=\tfrac{1}{3}$. This is a contradiction.

\vskip 1mm

{\bf (iii) $(\dim V_\alpha,\dim V_\lambda)=(3,2)$ on $M$.}

In this case, we take a local orthonormal frame field $\{X_{i}\}_{i=1}^{5}$ of $M$ such that
$$
AX_1=\alpha X_1,\ AX_2=\alpha X_2,\ AX_3=\lambda X_3,\ AX_4=\lambda X_4,\ AX_5=\alpha X_5,
$$
where $X_5=U$. Taking in \eqref{eqn:2.19} $(X,Y)=(X_1,X_2)$ gives $g(\phi X_1,X_2)=0$. It follows
that $J\{V_\alpha\cap\{U\}^\perp\}=V_\lambda$. Then, we can choose a local orthonormal frame field
$\{\tilde{X}_i\}_{i=1}^{5}$ such that $\tilde{X}_1=X_1,\ \tilde{X}_2=J\tilde{X}_1,\ \tilde{X}_3=X_2,
\ \tilde{X}_4=J\tilde{X}_3,\ \tilde{X}_5=U$, and moreover, $\tilde{X}_1,\tilde{X}_3,\tilde{X}_5\in V_\alpha$
and $\tilde{X}_2,\tilde{X}_4\in V_\lambda$. By the identity \eqref{eqn:2.19} with $(X,Y)$ equal to $(\tilde{X}_2,\tilde{X}_3),\ (\tilde{X}_2,\tilde{X}_4)$, respectively, we have $g(G(\tilde{X}_2,\xi),\tilde{X}_3)=g(G(\tilde{X}_2,\xi),\tilde{X}_4)=0$.
This implies that $G(\tilde{X}_2,\xi)=0$ due to the obvious fact $G(\tilde{X}_2,\xi)\perp{\rm Span}\,\{\xi,U,\tilde{X}_1,\tilde{X}_2\}$.

However, by \eqref{eqn:2.6} we have $|G(\tilde{X}_2,\xi)|^2=\tfrac{1}{3}$. This is also a contradiction.

\vskip 1mm

{\bf (iv) $(\dim V_\alpha,\dim V_\lambda)=(4,1)$ on $M$.}

In this case, we take a local orthonormal frame field $\{X_{i}\}_{i=1}^{5}$ of $M$ such that
$$
AX_1=\lambda X_1,\ AX_2=\alpha X_2,\ AX_3=\alpha X_3,\ AX_4=\alpha X_4,\ AX_5=\alpha X_5,
$$
where $X_2=JX_1,\ X_4=JX_3,\ X_5=U$. By \eqref{eqn:2.3}--\eqref{eqn:2.5}, $G(X_1,\xi)$ is orthogonal to ${\rm Span}\{\xi,U,X_1,X_2\}$,
so $AG(X_1,\xi)=\alpha G(X_1,\xi)$.
Taking in \eqref{eqn:2.19} $X=X_1$, we get
\begin{equation}\label{eqn:3.23}
(\alpha-\lambda)g(G(X_1,\xi),Y)=(\alpha^2-\alpha\lambda+\tfrac{1}{6})g(X_2,Y),\ \forall\, Y\in\{U\}^\perp.
\end{equation}

Then, similar as in case {\bf (ii)}, from \eqref{eqn:3.23}, the fact $g(X_2,X_3)=g(X_2,X_4)=0$
and $\alpha\neq\lambda$, we obtain $G(X_1,\xi)=0$.

However, by \eqref{eqn:2.6}, $|G(X_1,\xi)|^2=\tfrac{1}{3}$. This is a contradiction.
\end{proof}

%==================================================
\section{Examples of Hopf hypersurfaces in $\mathbf{S}^3\times\mathbf{S}^3$}\label{sect:4}
%%%%%%%%%%%%%%%%%%%%%%%%%%%%%%%%%%%%%%%%%%%%%%%%%%%
As usual we denote $\mathbf{S}^3$ (resp. $\mathbf{S}^2$) the set of the unitary (resp. imaginary)
quaternions in the quaternion space $\mathbb{H}$. Then, in this short section, we can describe
several of the simplest examples of Hopf hypersurfaces in the NK $\mathbf{S}^3\times\mathbf{S}^3$.
\begin{examples}\label{exam:4.1}
For each $0<r\leq1$, we define three families of hypersurfaces
$M^{(r)}_1$, $M^{(r)}_2$ and $M^{(r)}_3$ in the NK $\mathbf{S}^3\times\mathbf{S}^3$ as below:
\begin{equation*}
\begin{aligned}
&M^{(r)}_1:=\left\{(x,\sqrt{1-r^2}+ry)\in\mathbf{S}^3\times
\mathbf{S}^3 ~|~x\in \mathbf{S}^3,\ y\in\mathbf{S}^2\right\},\\
&M^{(r)}_2:=\mathcal{F}_1(M^{(r)}_1),\\%\qquad \qquad
&M^{(r)}_3:=\mathcal{F}_2(M^{(r)}_1).
\end{aligned}
\end{equation*}
\end{examples}

\begin{remark}\label{rem:4.1}
Among the preceding hypersurfaces $M^{(r)}_1$, $M^{(r)}_2$ and $M^{(r)}_3$ of the NK
$\mathbf{S}^3\times\mathbf{S}^3$, $M^{(r)}_1$, $M^{(r)}_2$ and $M^{(1)}_3$ have been
carefully discussed, respectively, in Examples 5.1, 5.2 and 5.3 of \cite{H-Y-Z}.
As a matter of fact, all of them are Hopf hypersurfaces with three distinct
constant principal curvatures: $\alpha=0$ (i.e. $AU=0$) of multiplicity $1$,
$\lambda=\tfrac{\sqrt{1-r^2}}{2r}-\tfrac{\sqrt{3-2r^2}}{2\sqrt{3}r}$ of multiplicity $2$,
and $\beta=\tfrac{\sqrt{1-r^2}}{2r}+\tfrac{\sqrt{3-2r^2}}{2\sqrt{3}r}$ of multiplicity $2$.
The holomorphic distributions $\{U\}^\perp$ of these hypersurfaces are all preserved
by the almost product structure $P$ of the NK $\mathbf{S}^3\times\mathbf{S}^3$, but
$P$ acts differently on their unit normal vector fields.
\end{remark}

\begin{examples}\label{exam:4.2}
For each $0<k,l<1$, $k^2+l^2=1$, we can define three families of hypersurfaces
$M^{(k,l)}_4$, $M^{(k,l)}_5$ and $M^{(k,l)}_6$ in the NK $\mathbf{S}^3\times\mathbf{S}^3$ as below:
\begin{equation*}
\begin{aligned}
&M^{(k,l)}_4:=\left\{(x,(y_1,y_2,y_3,y_4))\in \mathbf{S}^3 \times
\mathbf{S}^3 ~|~x\in \mathbf{S}^3,\ y_1^2+y_2^2=k^2,\ y_3^2+y_4^2=l^2\right\},\\
&M^{(k,l)}_5:=\mathcal{F}_1(M^{(k,l)}_4),\\%\qquad \qquad
&M^{(k,l)}_6:=\mathcal{F}_2(M^{(k,l)}_4).
\end{aligned}
\end{equation*}
\end{examples}

\begin{remark}\label{rem:4.2}
Direct calculations show that all of these three families of hypersurfaces are
Hopf ones, and they have five distinct constant principal curvatures: $\alpha=0$
(i.e. $AU=0$), $\lambda_1=\tfrac{3k-\sqrt{9k^2+3l^2}}{6l}$,
$\lambda_2=\tfrac{3k+\sqrt{9k^2+3l^2}}{6l}$, $\lambda_3=\tfrac{-3l-\sqrt{3k^2+9l^2}}{6k}$,
$\lambda_4=\tfrac{-3l+\sqrt{3k^2+9l^2}}{6k}$. Similarly, the holomorphic distributions
$\{U\}^\perp$ of these hypersurfaces are all preserved by the almost product structure
$P$ of the NK $\mathbf{S}^3\times\mathbf{S}^3$, but $P$ acts differently on their unit
normal vector fields.
\end{remark}

\begin{remark}\label{rem:4.3}
Theorem \ref{thm:1.2} gives a characterization of the Hopf hypersurfaces $M^{(r)}_1$,
$M^{(r)}_2$ and $M^{(r)}_3$ in the NK $\mathbf{S}^3\times\mathbf{S}^3$.
We expect that a similar interesting characterization of the Hopf hypersurfaces
$M^{(k,l)}_4$, $M^{(k,l)}_5$ and $M^{(k,l)}_6$ in the NK $\mathbf{S}^3\times\mathbf{S}^3$
is possible, but at the moment it is still not achieved.
\end{remark}

%==================================================
\section{The proof of Theorem \ref{thm:1.2}}\label{sect:5}
%%%%%%%%%%%%%%%%%%%%%%%%%%%%%%%%%%%%%%%%%%%%%%%%%%%

This last section is devoted to the proof of Theorem \ref{thm:1.2}, which is
given in two steps. In the sequel, we assume that $M$ is a Hopf hypersurface
of the NK $\mathbf{S}^3\times\mathbf{S}^3$ with three distinct principal
curvatures $\alpha,\ \lambda$ and $\beta$ such that $AU=\alpha U$, and that
$P\{U\}^\perp=\{U\}^\perp$. In particular, \eqref{eqn:2.23} holds.

\subsection{The principal curvatures and their multiplicities}\label{sect:5.1}~

Let $V_\alpha,V_\lambda$ and $V_\beta$ denote the eigenspaces corresponding
to the principal curvatures $\alpha,\lambda$ and $\beta$, respectively.
By the assumption of having three distinct principal curvatures and the
continuity of the principal curvature functions, we know that the
dimensions $(\dim V_\alpha,\dim V_\lambda,\dim V_\beta)$ remain unchanged
on $M$, which, without loss of generality, have four possibilities:
$(3,1,1)$, $(2,2,1)$, $(1,3,1)$ and $(1,2,2)$.

First of all, we shall determine the multiplicities of the principal curvatures.

\begin{lemma}\label{lemma:5.1}
The multiplicities of the three distinct principal curvature functions $\alpha,\ \lambda,\ \beta$ can
only be $1,2$ and $2$, respectively.
\end{lemma}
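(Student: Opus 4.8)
The plan is to eliminate the three ``forbidden'' multiplicity patterns $(3,1,1)$, $(2,2,1)$ and $(1,3,1)$, leaving only $(1,2,2)$ as claimed. Since $P\{U\}^\perp=\{U\}^\perp$, we are in the situation of \eqref{eqn:2.23}, so $\dim\mathfrak{D}=2$ and we may use the frame $\{e_i\}_{i=1}^5$ of Subsection~\ref{sect:2.3}(2) in which $e_1,e_3\in\mathfrak{D}^\perp$ satisfy $Pe_1=e_1$, $Pe_3=e_3$, $e_2=Je_1$, $e_4=Je_3$, $e_5=U$. The main tool throughout will be Lemma~\ref{lemma:2.1}, i.e. the identity \eqref{eqn:2.19} restricted to $\{U\}^\perp$, combined with the algebraic identities \eqref{eqn:2.3}--\eqref{eqn:2.6} for $G$ and the normalization $|G(X,\xi)|^2=\tfrac13$ and $|G(X,Y)|^2=\tfrac13$ for suitable unit vectors. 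In each forbidden case I will pick an eigenbasis adapted to $V_\alpha,V_\lambda,V_\beta$, expand \eqref{eqn:2.19} on well-chosen pairs $(X,Y)$, and drive a contradiction by forcing some $G(X,\xi)$ or $G(X,Y)$ to vanish while its norm is $\tfrac13$.

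For the cases with $\dim V_\alpha=1$ containing $U$ as the sole $\alpha$-eigenvector (patterns $(1,3,1)$ and $(1,2,2)$), the simplest sub-arguments mirror case (ii) of Lemma~\ref{lemma:3.2}: if some eigenspace were $J$-invariant and of ``wrong'' size, then $G(X_1,\xi)$ would be orthogonal to $\mathrm{Span}\{\xi,U,X_1,JX_1\}$ and forced to be a multiple of the remaining eigenvectors, but substituting into \eqref{eqn:2.19} with $X=X_1$ yields an equation of the shape $(\lambda-\beta)\,g(G(X_1,\xi),Y)=(\cdots)\,g(JX_1,Y)$ whose only solution has $G(X_1,\xi)=0$, contradicting $|G(X_1,\xi)|^2=\tfrac13$. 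The genuinely new feature here is that $J$ need not respect the eigenspace decomposition, so I expect to introduce, as in case (iii) of Lemma~\ref{lemma:3.1}, the invariant angle $\theta:=|g(JX_1,X_2)|$ between $J$ and the eigenspaces and to split on $\theta=0$, $\theta=1$, $0<\theta<1$. The $\theta=1$ subcase gives $A\phi=\phi A$, which is excluded by Theorem~4.1 of \cite{H-Y-Z} (such hypersurfaces have three distinct principal curvatures, but that theorem pins down the multiplicities and rules out the patterns in question); the $\theta=0$ subcase forces $J(V_{\text{one eigenspace}})=V_{\text{another}}$, after which the $G$-identities collapse as in case (iii)-(2) of Lemma~\ref{lemma:3.1}.

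I expect the hard part to be the pattern $(2,2,1)$, where $U$ lies in the one-dimensional $\beta$-eigenspace and both $V_\alpha,V_\lambda$ are two-dimensional, because then neither the structure vector nor $J$ is aligned with the splitting and the intermediate-angle subcase $0<\theta<1$ survives the naive $G$-vanishing argument. Here the strategy, following the hardest branch (iii)-(1) of Lemma~\ref{lemma:3.1}, is to write each $X_i$ in the $\{e_j\}$-frame, extract from \eqref{eqn:2.19} on all pairs $(X_i,X_j)$ a system of scalar equations \eqref{eqn:3.9}--\eqref{eqn:3.14}-type relations, and thereby pin down $\theta$ in terms of $b$ (recall $a^2+b^2=1$ from \eqref{eqn:2.23}, and note that $\dim\mathfrak D=2$ here replaces the $c$ of \eqref{eqn:2.22} by the simpler \eqref{eqn:2.23}). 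Then I compute the Christoffel symbols $\Gamma_{ij}^k$ via $G(X_i,\xi)=(\tilde\nabla_{X_i}J)\xi$ and feed everything into the Codazzi equation \eqref{eqn:2.18}; matching the two expressions for $(\nabla_UA)e_i-(\nabla_{e_i}A)U$ produces a linear system whose coefficient matrix must be checked to be nonsingular (a determinant computation analogous to $\det F=-64\theta^2(a^2+b^2)^3$), forcing the relevant coordinates to vanish and contradicting the unit-length of the $X_i$. I anticipate this determinant/rank bookkeeping to be the only technically delicate step; the remaining patterns should fall out quickly from the $G$-norm contradictions described above.
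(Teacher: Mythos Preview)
Your overall strategy---eliminate the three forbidden multiplicity patterns via repeated use of \eqref{eqn:2.19}---matches the paper, but you have misjudged where the difficulty lies and you make one genuine error.

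The paper's treatment of $(2,2,1)$ is far lighter than you anticipate: it never touches Christoffel symbols or the Codazzi equation. With $X_1\in V_\alpha\cap\{U\}^\perp$, $X_2\in V_\beta$, $X_3,X_4\in V_\lambda$ (built via $X_3=(JX_1-\theta X_2)/\sqrt{1-\theta^2}$, $X_4=(JX_2+\theta X_1)/\sqrt{1-\theta^2}$), the paper applies \eqref{eqn:2.19} to $(X_1,X_2)$, $(X_1,X_3)$, $(X_2,X_4)$, $(X_3,X_4)$ and obtains the four scalar equations \eqref{eqn:5.3}--\eqref{eqn:5.6} in $\alpha,\lambda,\beta,\theta$ alone. (The bracket on the left of \eqref{eqn:2.19} vanishes identically here because $P\{U\}^\perp=\{U\}^\perp$ forces $g(PX,\xi)=g(PX,U)=0$ for $X\in\{U\}^\perp$; so there is no coupling to $a,b$ at all.) Solving three of these equations gives $\alpha,\lambda,\beta$ explicitly as functions of $\theta$; substituting into the fourth yields the contradiction $\sqrt{1-\theta^2}/(3\sqrt{1-\theta^2+\theta^4})=0$. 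No connection coefficients, no determinant chase---the machinery you borrow from Lemma~\ref{lemma:3.1}(iii)-(1) was needed there only because $\dim\mathfrak D=4$ injected the parameter $c$ into the left side of \eqref{eqn:2.19}.

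Your $\theta=1$ argument for $(2,2,1)$ contains a real gap. Here $\theta=|g(JX,Y)|$ with $X\in V_\alpha\cap\{U\}^\perp$ and $Y\in V_\beta$, so $\theta=1$ means $JX_1=\pm X_2$ with $AX_1=\alpha X_1$ and $AX_2=\beta X_2$. Since $\alpha\neq\beta$, this does \emph{not} give $A\phi=\phi A$, and Theorem~4.1 of \cite{H-Y-Z} cannot be invoked. The paper instead notes that $V_\lambda$ is then $J$-invariant, picks $X_3\in V_\lambda$, $X_4=JX_3$, and reads off from \eqref{eqn:2.19} that $(\alpha-\lambda)g(G(X_1,X_3),\xi)=(\alpha-\lambda)g(G(X_1,X_4),\xi)=0$; with $G(X_1,X_4)=-JG(X_1,X_3)$ this forces $G(X_1,X_3)=0$, contradicting $|G(X_1,X_3)|^2=\tfrac13$. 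The $\theta=0$ subcase is identical.

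You also leave $(3,1,1)$ essentially untreated. The paper handles it first and directly: applying \eqref{eqn:2.19} to a pair $(X_3,X_4)$ in $V_\alpha\cap\{U\}^\perp$ gives $g(\phi X_3,X_4)=0$, hence $J$ swaps $V_\lambda\oplus V_\beta$ with $V_\alpha\cap\{U\}^\perp$; four further applications of \eqref{eqn:2.19} then give $\alpha^2-\alpha\lambda=\alpha^2-\alpha\beta=-\tfrac16$ together with $\lambda+\beta=2\alpha$, which are incompatible. Pattern $(1,3,1)$ falls by the same mechanism: \eqref{eqn:2.19} yields $2\alpha-\lambda-\beta=0$ and $\alpha\lambda+\alpha\beta-2\lambda\beta=-\tfrac16$, hence $(\lambda-\beta)^2=-\tfrac13$.
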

\begin{proof}
Suppose on the contrary that, for the multiplicities of the principal curvatures $\alpha,\lambda$ and
$\beta$, one of the three possibilities $(3,1,1),(2,2,1),(1,3,1)$ does occur. Then, for each possible
case, we shall derive a contradiction by using Lemma \ref{lemma:2.1}.

\vskip 1mm

{\bf (i) $(\dim V_\alpha,\dim V_\lambda,\dim V_\beta)=(3,1,1)$ on $M$.}

We take a local orthonormal frame field $\{X_{i}\}_{i=1}^{5}$ of $M$ such that
$$
AX_1=\lambda X_1,\ AX_2=\beta X_2,\ AX_3=\alpha X_3,\ AX_4=\alpha X_4,\ X_5=U.
$$

Taking in \eqref{eqn:2.19} $(X,Y)=(X_3,X_4)$, we get $g(\phi X_3,X_4)=0$, which implies that
$J\{V_\lambda\oplus V_\beta\}=V_\alpha\cap{\{U\}^{\perp}}$. So we can further choose
$X_3=JX_1$ and $X_4=JX_2$. Then, we easily show that $G(X_1,X_2)\in {\rm Span}\{\xi,U\}$, and
by \eqref{eqn:2.6}, we have $|G(X_1,X_2)|^2=\tfrac{1}{3}$.

Now, taking in \eqref{eqn:2.19} $(X,Y)=(X_1,X_3),\ (X_2,X_4),\ (X_2,X_3),\ (X_1,X_2)$, respectively,
we obtain
\begin{gather}
\alpha^2-\alpha\lambda=-\tfrac{1}{6},\ \ \alpha^2-\alpha\beta=-\tfrac{1}{6},\label{eqn:5.1}\\
(\alpha-\beta)g(G(X_1,X_2),U)=0, \ \ (2\alpha-\lambda-\beta)g(G(X_1,X_2),\xi)=0.\label{eqn:5.2}
\end{gather}

From \eqref{eqn:5.2}, $\alpha-\beta\neq0$ and the preceding results, we see that $g(G(X_1,X_2),\xi)\not=0$
and $\lambda+\beta=2\alpha$. On the other hand, from \eqref{eqn:5.1} we get $2\alpha^2-\alpha(\lambda+
\beta)=-\tfrac{1}{3}$. But this is a contradiction to $\lambda+\beta=2\alpha$.

\vskip 1mm

{\bf (ii) $(\dim V_\alpha,\dim V_\lambda,\dim V_\beta)=(2,2,1)$ on $M$.}

In this case, we can define a function $\theta:=|g(JX,Y)|$ on $M$ for unit vectors
$X\in V_\alpha\cap\{U\}^\perp$ and $Y\in V_\beta$. Since $0\leq\theta\leq1$ and that
our concern is only local, in order to prove that Case (ii) does not occur, it is
sufficient to show that the following three subcases do not occur on $M$.

\vskip 1mm

{\bf (ii)-(a)}. $0<\theta<1$.

In this subcase, we have the decomposition $JX=W+g(JX,Y)Y$ and $0\not=W\in V_\lambda$. Then,
we can take a local orthonormal frame field $\{X_{i}\}_{i=1}^{5}$ of $M$ such that
$$
AX_1=\alpha X_1,\ AX_2=\beta X_2,\ AX_3=\lambda X_3,\ AX_4=\lambda X_4,\ X_5=U,
$$
where $X_3=(JX_1-\theta X_2)/\sqrt{1-\theta^2},\ X_4=(JX_2+\theta X_1)/\sqrt{1-\theta^2}$
and $\theta=g(JX_1,X_2)$.

It follows that $G(X_1,X_2)\in {\rm Span}\{\xi,U\}$ and, by \eqref{eqn:2.6}, $|G(X_1,X_2)|^2=(1-\theta^2)/3$.
Moreover, it is easily seen that with respect to the frame field $\{X_{i}\}_{i=1}^{5}$,
all relations of \eqref{eqn:3.7} hold.

Then, taking in \eqref{eqn:2.19} that $(X,Y)=(X_1,X_4)$ and making use of \eqref{eqn:3.7}, we get
$$
0=(\lambda-\alpha)g(G(X_1,X_2),U).
$$
It follows that $g(G(X_1,X_2),U)=0$ and $G(X_1,X_2)=\pm\sqrt{(1-\theta^2)/3}\,\xi$.

In case $G(X_1,X_2)=-\sqrt{(1-\theta^2)/3}\,\xi$, with respect to the normal vector
$\tilde{\xi}=-\xi$, we have $G(X_1,X_2)=\sqrt{(1-\theta^2)/3}\,\tilde\xi$, and
the principal curvatures become $\tilde{\alpha}=-\alpha$, $\tilde{\lambda}=-\lambda$,
$\tilde{\beta}=-\beta$, and $X_1,X_5\in V_{\tilde{\alpha}}$, $X_2\in V_{\tilde{\beta}}$,
$X_3,X_4\in V_{\tilde{\lambda}}$. So it is sufficient to show that $G(X_1,X_2)=\sqrt{(1-\theta^2)/3}\,\xi$.

Taking in \eqref{eqn:2.19}, respectively, $(X,Y)=(X_1,X_2), (X_1,X_3), (X_2,X_4), (X_3,X_4)$,
and making use of \eqref{eqn:3.7}, we have
\begin{equation}\label{eqn:5.3}
-\tfrac{\theta}{6}=(\alpha-\beta)\sqrt{\tfrac{1-\theta^2}{3}}
+(\alpha^2-\alpha\beta)\theta,
\end{equation}
\begin{equation}\label{eqn:5.4}
\sqrt{3}\alpha+\tfrac{\sqrt{3}}{6(\alpha-\lambda)}=\tfrac{\theta}{\sqrt{1-\theta^2}},
\end{equation}
\begin{equation}\label{eqn:5.5}
-\tfrac{\sqrt{1-\theta^2}}{6}=-\tfrac{\sqrt{3}}{3}(2\alpha-\lambda-\beta)\theta
+(\alpha\lambda+\alpha\beta-2\lambda\beta)\sqrt{1-\theta^2},
\end{equation}
\begin{equation}\label{eqn:5.6}
-\sqrt{3}\lambda-\tfrac{\sqrt{3}}{12(\alpha-\lambda)}=\tfrac{\sqrt{1-\theta^2}}{\theta}.
\end{equation}

From these equations, we can derive a contradiction. Indeed, from \eqref{eqn:5.4} and
\eqref{eqn:5.6}, we have
\begin{equation}\label{eqn:5.7}
\sqrt{3}(\alpha-\lambda)+\tfrac{\sqrt{3}}{12(\alpha-\lambda)}=\tfrac{1}{\theta\sqrt{1-\theta^2}}.
\end{equation}
It follows that
$\alpha-\lambda=\tfrac{1\pm\sqrt{1-\theta^2+\theta^4}}{2\theta\sqrt{3(1-\theta^2)}}$. Then,
from  \eqref{eqn:5.4}, \eqref{eqn:5.6} and \eqref{eqn:5.3} we get
$$
\alpha=\tfrac{-1+\theta^2\pm\sqrt{1-\theta^2+\theta^4}}{\theta\sqrt{3(1-\theta^2)}},\ \
\lambda=\tfrac{-3+2\theta^2\pm\sqrt{1-\theta^2+\theta^4}}{2\theta\sqrt{3(1-\theta^2)}},\ \
\beta=\tfrac{\pm(2-\theta^2+\theta^4)-2(1-\theta^2)\sqrt{1-\theta^2+\theta^4}}
{2\sqrt{3}\theta\sqrt{(1-\theta^2)(1-\theta^2+\theta^4)}}.
$$

Now, substituting $\alpha,\lambda$ and $\beta$ into \eqref{eqn:5.5}, we get the
contradiction $\tfrac{\sqrt{1-\theta^2}}{3\sqrt{1-\theta^2+\theta^4}}=0$.

\vskip 1mm

{\bf (ii)-(b). $\theta=1$}.

In this subcase, both $(V_\alpha\cap\{U\}^\perp)\oplus V_\beta$ and $V_\lambda$ are $J$-invariant.
We take a local orthonormal frame field $\{X_{i}\}_{i=1}^{5}$ of $M$ such that
$$
AX_1=\alpha X_1,\ AX_2=\beta X_2,\ AX_3=\lambda X_3,\ AX_4=\lambda X_4,\ X_5=U,
$$
where $X_2=JX_1$ and $X_4=JX_3$. Then $G(X_1,X_3)\in {\rm Span}\{\xi,U\}$, and by \eqref{eqn:2.6},
we have $|G(X_1,X_3)|^2=\tfrac{1}{3}$. Taking in \eqref{eqn:2.19} $(X,Y)=(X_1,X_3)$ and $(X_1,X_4)$,
respectively, we easily get $(\alpha-\lambda)g(G(X_1,X_3),\xi)=(\alpha-\lambda)g(G(X_1,X_4),\xi)=0$.
This together with $G(X_1,X_4)=-JG(X_1,X_3)$ implies that $G(X_1,X_3)=0$, which is a contradiction.

\vskip 1mm

{\bf (ii)-(c). $\theta=0$}.

In this subcase, $J\{(V_\alpha\cap\{U\}^\perp)\oplus V_\beta\}=V_\lambda$. Then, we can take a
local orthonormal frame field $\{X_{i}\}_{i=1}^{5}$ of $M$ such that
$$
AX_1=\alpha X_1,\ AX_2=\lambda X_2,\ AX_3=\lambda X_3,\ AX_4=\beta X_4,\ X_5=U,
$$
where $X_2=JX_1$ and $X_4=JX_3$. Then $G(X_1,X_3)\in {\rm Span}\{\xi,U\}$ and $|G(X_1,X_3)|^2=\tfrac{1}{3}$.
Taking in \eqref{eqn:2.19} $(X,Y)=(X_1,X_3)$ and $(X_1,X_4)$, respectively, we get %
$$
(\alpha-\lambda)g(G(X_1,X_3),\xi)=(\alpha-\beta)g(G(X_1,X_4),\xi)=0.
$$
Then similar as the last subcase we get $G(X_1,X_3)=0$, which is a contradiction.

\vskip 1mm

{\bf (iii) $(\dim V_\alpha,\dim V_\lambda,\dim V_\beta)=(1,3,1)$ on $M$.}

In this case, we can take a local orthonormal frame field $\{X_{i}\}_{i=1}^{5}$ of $M$ such that
$$
AX_1=\beta X_1,\ AX_2=\lambda X_2,\ AX_3=\lambda X_3,\ AX_4=\lambda X_4,\ X_5=U,
$$
where $X_2=JX_1,X_4=JX_3$. Then $G(X_1,X_3)\in {\rm Span}\{\xi,U\}$ and $|G(X_1,X_3)|^2=\tfrac{1}{3}$.

Taking in \eqref{eqn:2.19} $(X,Y)=(X_1,X_2),(X_1,X_3)$ and $(X_1,X_4)$, respectively, we have
\begin{gather}
-\tfrac{1}{6}=\alpha\lambda+\alpha\beta-2\lambda\beta,\label{eqn:5.8}\\
(2\alpha-\lambda-\beta)g(G(X_1,X_3),\xi)=(2\alpha-\lambda-\beta)g(G(X_1,X_4),\xi)=0.\label{eqn:5.9}
\end{gather}
Then, by \eqref{eqn:5.9} and the fact $g(G(X_1,X_4),\xi)=g(-JG(X_1,X_3),\xi)=-g(G(X_1,X_3),U)$, we
get $2\alpha-\lambda-\beta=0$. This together with \eqref{eqn:5.8} gives the contradiction
$(\lambda-\beta)^2=-\tfrac{1}{3}$.

We have completed the proof of Lemma \ref{lemma:5.1}.
\end{proof}

Next, we shall determine the principal curvatures and show that they are constants.
Since we have the fact $\dim V_\alpha=1$ and $\dim V_\lambda=\dim V_\beta=2$,
without loss of generality, we shall assume that $\lambda>\beta$. Then, we can state our
result as follows:

\begin{lemma}\label{lemma:5.2}
All the three distinct principal curvatures $\alpha, \lambda$ and $\beta$ are constants.
More specifically, we have $\alpha=0,\ \lambda=\tfrac{\sqrt{1-\theta^2}+1}{2\sqrt{3}\theta}$
and $\beta=\tfrac{\sqrt{1-\theta^2}-1}{2\sqrt{3}\theta}$ for some $0<\theta\le1$.
\end{lemma}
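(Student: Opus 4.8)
The plan is to run Lemma~\ref{lemma:2.1} against an orthonormal frame adapted to both the complex structure $\phi$ and the eigenspace splitting, extract the principal curvatures algebraically, and then upgrade the resulting pointwise relations to genuine constancy via the Codazzi equation. Throughout I work on $\{U\}^\perp=V_\lambda\oplus V_\beta$, where by Lemma~\ref{lemma:5.1} both summands are $2$-dimensional. First I would record two simplifications. Since $P\{U\}^\perp=\{U\}^\perp$, relation \eqref{eqn:2.23} gives $P\xi=a\xi+bU$ and $PU=b\xi-aU$, whence $g(PX,\xi)=g(X,P\xi)=0$ and $g(PX,U)=g(X,PU)=0$ for every $X\in\{U\}^\perp$; therefore the entire $P$-bracket on the left-hand side of \eqref{eqn:2.19} drops out and the lemma collapses to the clean identity
\[
\tfrac16\,g(\phi X,Y)=g\big((\alpha I-A)G(X,\xi),Y\big)+g\big(G((\alpha I-A)X,\xi),Y\big)-\alpha\,g\big((A\phi+\phi A)X,Y\big)+2\,g\big(A\phi AX,Y\big)
\]
for $X,Y\in\{U\}^\perp$. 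Moreover Lemma~\ref{lemma:2.2} already yields that $\alpha$ is constant.

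Next I would unravel the structure of $G(\cdot,\xi)$. Using $\xi=JU$ together with \eqref{eqn:2.3} and \eqref{eqn:2.5} one checks that $G(X,\xi)\perp\xi$ and $G(X,\xi)\perp U$, so $G(\cdot,\xi)$ maps $\{U\}^\perp$ into itself; the same relations make $T:=\sqrt{3}\,G(\cdot,\xi)$ skew-symmetric, and \eqref{eqn:2.6} forces $g(TX,TY)=g(X,Y)$, so $T$ is an orthogonal complex structure on $\{U\}^\perp$. Since $G(JX,\xi)=-JG(X,\xi)$, it anticommutes with $\phi$, and hence $\{\phi,T,\phi T\}$ equips $\{U\}^\perp$ with a quaternionic structure while $G(X,\xi)=\tfrac1{\sqrt3}TX$. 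I would then fix the canonical frame $X_1,X_2\in V_\lambda$, $X_3,X_4\in V_\beta$ in which $\phi X_1=\theta X_2+\sqrt{1-\theta^2}\,X_3$, $\phi X_2=-\theta X_1+\sqrt{1-\theta^2}\,X_4$, and the companions, where $\theta=|g(\phi X_1,X_2)|\in(0,1]$ is the (a priori variable) parameter recording how far $\phi$ is from preserving the eigenspace splitting, the value $\theta=1$ corresponding to $A\phi=\phi A$.

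With this frame the reduced identity turns into a finite system of scalar equations: evaluating it on the pairs $(X_i,X_j)$ first solves for the components $g(TX_i,X_j)$ in terms of $\alpha,\lambda,\beta,\theta$, and after eliminating them leaves relations among $\alpha,\lambda,\beta,\theta$ alone. Solving this system should give $\alpha=0$, the product relation $\lambda\beta=-\tfrac1{12}$, and finally the asserted closed forms $\lambda=\tfrac{\sqrt{1-\theta^2}+1}{2\sqrt3\,\theta}$ and $\beta=\tfrac{\sqrt{1-\theta^2}-1}{2\sqrt3\,\theta}$ (with $\lambda>\beta$ as normalized). This part is mechanical once the $G$-bookkeeping is in place.

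The main obstacle is that Lemma~\ref{lemma:2.1} is purely algebraic and leaves $\theta$, equivalently the pair $(\lambda,\beta)$, as a function not yet pinned down; indeed the families of Examples~\ref{exam:4.1} show that every admissible value genuinely occurs across the family, so constancy must be extracted from an integrability condition rather than from \eqref{eqn:2.19}. To finish I would compute the connection coefficients from \eqref{eqn:2.16} in the above frame, substitute into the Codazzi equation \eqref{eqn:2.18}, and read off the $\{U\}^\perp$-components of $(\nabla_UA)X_i-(\nabla_{X_i}A)U$; these should force the directional derivatives of $\theta$ to vanish, whence $\theta$ is constant and so are $\lambda,\beta$ (given $\lambda\beta=-\tfrac1{12}$, constancy of either one suffices). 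This Codazzi computation, not the algebra, is where the real work lies.
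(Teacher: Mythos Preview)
Your plan is largely aligned with the paper's approach, but there is a genuine gap at the step where you assert that the algebraic system extracted from \eqref{eqn:2.19} ``should give $\alpha=0$.'' In fact it does not: after dropping the $P$-terms (which is correct), the identity evaluated on the pairs $(X_i,X_j)$ yields a system equivalent to the paper's \eqref{eqn:5.11}--\eqref{eqn:5.14}, and solving for $\alpha,\lambda,\beta$ in terms of $\theta$ produces \emph{two} branches: either $\alpha=0$ (your claimed outcome) or $\alpha=\tfrac{1-2\theta^2}{\theta\sqrt{3(1-\theta^2)}}$. Both are genuine algebraic solutions of the reduced \eqref{eqn:2.19}, so your product relation $\lambda\beta=-\tfrac1{12}$ and the closed forms for $\lambda,\beta$ are not yet justified. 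Eliminating the second branch is nontrivial and is exactly what the paper defers to Lemma~\ref{lemma:5.3}: one computes $g(R(X_1,X_3)X_3,X_1)$ and $g(R(X_1,X_3)X_4,X_2)$ both directly from the connection coefficients and via the Gauss equation \eqref{eqn:2.17}, and the comparison forces $\theta=\tfrac{\sqrt2}{2}$, at which point the second branch collapses into the first. Your proposal invokes only Codazzi, which is insufficient here; the Gauss equation is essential for this step.

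Two smaller points also need attention. First, the boundary case $\theta=0$ (i.e.\ $\phi V_\lambda=V_\beta$) is not automatically excluded by your frame construction and must be ruled out separately; the paper does this by a short argument deriving the contradiction $(\lambda-\beta)^2=-\tfrac13$. Second, the Codazzi computation for constancy of $\theta$ is considerably more involved than you indicate: beyond the coefficients coming from \eqref{eqn:2.16}, the paper must bring in the components $p_{ij}=g(PX_i,X_j)$ of the almost product structure on $\{U\}^\perp$ and the compatibility identity \eqref{eqn:2.9} in order to close the system (see \eqref{eqn:5.22}--\eqref{eqn:5.33} and the intermediate claim that $a\neq0$). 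Your quaternionic packaging via $T=\sqrt3\,G(\cdot,\xi)$ is a clean way to organize the algebra of \eqref{eqn:2.19}, but it does not bypass these difficulties.
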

\begin{proof}
It is easily seen that $|g(JX,Y)|$, for an orthonormal basis $\{X,Y\}$ of $V_\lambda$,
defines a well-defined function $\theta$ on $M$ satisfying $0\le\theta\le1$.
Since our concern is only local, in order to prove Lemma \ref{lemma:5.2},
by using the continuity of the principal curvature functions and $\theta$, we are sufficient to
consider the following three cases:

\vskip 1mm

{\bf (1). $0<\theta<1$ on $M$}.

In this case, we see that $JV_\lambda\neq V_\beta$ and $V_\lambda$ is not $J$-invariant. Then, we can
take a local orthonormal frame field $\{X_{i}\}_{i=1}^{5}$ of $M$ such that $\theta=g(JX_1,X_2)$ and
\begin{equation}\label{eqn:5.10}
AX_1=\lambda X_1,\ AX_2=\lambda X_2,\ AX_3=\beta X_3,\ AX_4=\beta X_4,\ AX_5=\alpha X_5,
\end{equation}
where $X_5=U,\ X_3=\tfrac{JX_1-\theta X_2}{\sqrt{1-\theta^2}},\ X_4=\tfrac{JX_2+\theta X_1}{\sqrt{1-\theta^2}}$.
Thus $G(X_1,X_2)\in {\rm Span}\{\xi,U\}$ and, by \eqref{eqn:2.6}, $|G(X_1,X_2)|^2=\tfrac13(1-\theta^2)$.
Moreover, it is easily seen that with respect to the frame field $\{X_{i}\}_{i=1}^{5}$, all relations of
\eqref{eqn:3.7} hold.

Taking, in \eqref{eqn:2.19}, $(X,Y)=(X_3,X_4)$ and $(X,Y)=(X_1,X_i)$ for $2\leq i\leq4$,
respectively, and making use of \eqref{eqn:3.7}, we have
\begin{equation}\label{eqn:5.11}
-\tfrac{\theta}{6}=2(\alpha-\lambda)g(G(X_1,X_2),\xi)+2\lambda(\alpha-\lambda)\theta,
\end{equation}
\begin{equation}\label{eqn:5.12}
-\tfrac16\sqrt{1-\theta^2}=-\tfrac{\theta(2\alpha-\lambda-\beta)}{\sqrt{1-\theta^2}}g(G(X_1,X_2),\xi)
+(\alpha\lambda+\alpha\beta-2\lambda\beta)\sqrt{1-\theta^2},
\end{equation}
\begin{equation}\label{eqn:5.13}
0=(2\alpha-\lambda-\beta)g(G(X_1,X_2),U),
\end{equation}
\begin{equation}\label{eqn:5.14}
\tfrac{\theta}{6}=-2(\alpha-\beta)g(G(X_1,X_2),\xi)+2\beta(\beta-\alpha)\theta.
\end{equation}

If $2\alpha-\lambda-\beta=0$, then together with \eqref{eqn:5.12} we derive a contradiction
$(\lambda-\beta)^2=-\tfrac{1}{3}$.

Hence $2\alpha-\lambda-\beta\neq0$. Then from \eqref{eqn:5.13} we get $g(G(X_1,X_2),U)=0$,
and therefore we obtain $G(X_1,X_2)=\pm\sqrt{(1-\theta^2)/3}\,\xi$. Without loss of generality,
we shall assume that $G(X_1,X_2)=-\sqrt{(1-\theta^2)/3}\,\xi$.

Actually, if it occurs $G(X_1,X_2)=\sqrt{(1-\theta^2)/3}\,\xi$, then $G(X_3,X_4)=-\sqrt{(1-\theta^2)/3}\,\xi$
and $g(JX_3,X_4)=-\theta<0$. Now, with respect to the normal vector field $\tilde{\xi}=-\xi$,
the principal curvatures become $\tilde{\alpha}=-\alpha$, $\tilde{\lambda}=-\beta$ and
$\tilde{\beta}=-\lambda$, $\tilde{\lambda}>\tilde{\beta}$.
Putting $\tilde{X_1}=X_3$, $\tilde{X_2}=-X_4$, $\tilde{X}_3=\tfrac{J\tilde{X}_1-\theta\tilde{X}_2}
{\sqrt{1-\theta^2}}$, $\tilde{X}_4=\tfrac{J\tilde{X}_2+\theta \tilde{X}_1}{\sqrt{1-\theta^2}}$ and
$\tilde{X}_5=U$, then, with respect to the orthonormal frame field $\{\tilde{X}_{i}\}_{i=1}^{5}$,
as assumed
we have $G(\tilde{X}_1,\tilde{X}_2)=-\sqrt{(1-\theta^2)/3}\,\tilde{\xi}$ and $g(J\tilde{X}_1,\tilde{X}_2)=\theta>0$.

Having the assumption $G(X_1,X_2)=-\sqrt{(1-\theta^2)/3}\,\xi$, the equations \eqref{eqn:5.11}, \eqref{eqn:5.12}
and \eqref{eqn:5.14} become
\begin{equation}\label{eqn:5.15}
\theta=4\sqrt{3}(\alpha-\lambda)\sqrt{1-\theta^2}+12\lambda(\lambda-\alpha)\theta,
\end{equation}
\begin{equation}\label{eqn:5.16}
-\sqrt{1-\theta^2}=2\sqrt{3}\theta(2\alpha-\lambda-\beta)
+6(\alpha\lambda+\alpha\beta-2\lambda\beta)\sqrt{1-\theta^2},
\end{equation}
\begin{equation}\label{eqn:5.17}
\theta=4\sqrt{3}(\alpha-\beta)\sqrt{1-\theta^2}+12\beta(\beta-\alpha)\theta.
\end{equation}
Then, solving $\lambda$ and $\beta$ from \eqref{eqn:5.15} and \eqref{eqn:5.17}, we obtain
$$
\lambda+\beta=\tfrac{3\alpha\theta+\sqrt{3(1-\theta^2)}}{3\theta},\ \ \lambda\beta=\tfrac{4\alpha\sqrt{3(1-\theta^2)}-\theta}{12\theta}.
$$
This combining with \eqref{eqn:5.16} gives $\alpha(\alpha\sqrt{1-\theta^2}+\tfrac{2\theta^2-1}{\sqrt{3}\theta})=0$.
Hence, $\alpha=0$ or $\alpha=\tfrac{1-2\theta^2}{\theta\sqrt{3-3\theta^2}}$.

In conclusion, we can solve the above equations to obtain two possibilities:

{\bf Case (1)-(i)}: $\alpha=0,\ \ \lambda=\tfrac{\sqrt{1-\theta^2}+1}{2\sqrt{3}\theta},\ \
\beta=\tfrac{\sqrt{1-\theta^2}-1}{2\sqrt{3}\theta}$;

{\bf Case (1)-(ii)}: $\alpha=\tfrac{1-2\theta^2}{\theta\sqrt{3(1-\theta^2)}},\ \
\lambda=\tfrac{2-3\theta^2+\theta}{2\theta\sqrt{3(1-\theta^2)}},\ \
\beta=\tfrac{2-3\theta^2-\theta}{2\theta\sqrt{3(1-\theta^2)}}$.

\vskip 1mm

Before dealing with these two subcases in more details, we need some preparations.

Put $\nabla_{X_i}X_j=\sum \Gamma_{ij}^{k}X_{k}$ with $\Gamma_{ij}^{k}=-\Gamma_{ik}^{j}$, $1\le i,j,k\le 5$.
First of all, we have
\begin{equation*}
G(X_1,\xi)=-\sum_{i=1}^5 \Gamma_{15}^{i}X_i+\lambda JX_1.
\end{equation*}
On the other hand, the facts $g(G(X_1,X_2),\xi)=-\sqrt{(1-\theta^2)/3}$ and $g(G(X_1,X_2),U)=0$
imply that $G(X_1,\xi)=\sqrt{\tfrac{1-\theta^2}{3}}X_2-\tfrac{\sqrt{3}}{3}\theta X_3$. Hence,
we obtain
\begin{equation}\label{eqn:5.18}
\Gamma_{15}^{1}=0,\ \ \Gamma_{15}^{2}=\lambda\theta-\sqrt{\tfrac{1-\theta^2}{3}},\ \ \Gamma_{15}^{3}=\lambda\sqrt{1-\theta^2}+\tfrac{\sqrt{3}}{3}\theta,\ \ \Gamma_{15}^{4}=0.
\end{equation}

Similarly, calculating $G(X_i,\xi)$ for $2\leq i\leq 4$, we can further obtain
\begin{equation}\label{eqn:5.19}
\left\{
\begin{aligned}
&\Gamma_{25}^{1}=-\lambda\theta+\sqrt{\tfrac{1-\theta^2}{3}},\ \ \Gamma_{25}^{2}=0,\ \ \Gamma_{25}^{3}=0,\ \ \Gamma_{25}^{4}=\lambda\sqrt{1-\theta^2}+\tfrac{\sqrt{3}}{3}\theta,\\
&\Gamma_{35}^{1}=-\beta\sqrt{1-\theta^2}-\tfrac{\sqrt{3}}{3}\theta,\ \ \Gamma_{35}^{2}=0,\ \ \Gamma_{35}^{3}=0,\ \ \Gamma_{35}^{4}=-\beta\theta+\sqrt{\tfrac{1-\theta^2}{3}},\\
&\Gamma_{45}^{1}=0,\ \ \Gamma_{45}^{2}=-\beta\sqrt{1-\theta^2}-\tfrac{\sqrt{3}}{3}\theta,\ \ \Gamma_{45}^{3}=\beta\theta-\sqrt{\tfrac{1-\theta^2}{3}},\ \ \Gamma_{45}^{4}=0.
\end{aligned}\right.
\end{equation}

Now, we calculate $g((\nabla_{X_i}A){X_j}-(\nabla_{X_j}A){X_i},X_k)$ for each $1\leq i,j,k\leq4$.

First, by using \eqref{eqn:2.18} we easily see that $g((\nabla_{X_i} A){X_j}-(\nabla_{X_j} A){X_i},X_k)=0$.

On the other hand, by using \eqref{eqn:5.10} we can calculate $0=g((\nabla_{X_i} A){X_j}-(\nabla_{X_j} A){X_i},X_k)$
to conclude that $X_1\lambda=X_2\lambda=X_3\beta=X_4\beta=0$ that is $X_i\theta=0$ for $1\le i\le4$, and
$\Gamma_{ij}^k=\Gamma_{ik}^j=0$ for $i\in\{1,2,3,4\},j\in\{1,2\}$ and $k\in\{3,4\}$.

Next, by definition, the above information of $\{\Gamma_{ij}^k\}$ and \eqref{eqn:3.7}, we can get
$$
0=g(G(X_1,X_2),X_3)=g((\tilde{\nabla}_{X_1} J){X_2},X_3)=\sqrt{1-\theta^2}\,(\Gamma_{14}^3-\Gamma_{12}^1).
$$
It follows that $\Gamma_{14}^3=\Gamma_{12}^1$. Similarly, by calculating $0=g(G(X_i,X_1),X_4)$ for $2\le i\le4$,
we further get $\Gamma_{23}^4=\Gamma_{21}^2$, $\Gamma_{33}^4=\Gamma_{31}^2$ and $\Gamma_{43}^4=\Gamma_{41}^2$.

Moreover, by using \eqref{eqn:3.7} we have $g(G(U,X_1),X_4)=-\tfrac{\sqrt{3}}{3}$, then direct calculation
of its left hand side gives
\begin{equation}\label{eqn:5.20}
(\Gamma_{53}^4-\Gamma_{51}^2)\sqrt{1-\theta^2}+(\Gamma_{52}^4+
\Gamma_{51}^3)\theta=-\tfrac{\sqrt{3}}{3}.
\end{equation}

Finally, from now on we assume that $PX_i=\sum_{j=1}^4p_{ij}X_j$ for $1\leq i\leq4$, where $p_{ij}=p_{ji}$ and, by the
definition of $X_3$ and $X_4$, we have the following relations:
\begin{equation}\label{eqn:5.21}
\left\{
\begin{aligned}
&p_{23}=p_{14}-\tfrac{(p_{11}+p_{22})\theta}{\sqrt{1-\theta^2}},\ \
p_{33}=\tfrac{\theta^2p_{22}-p_{11}+2\theta^2p_{11}}{1-\theta^2}-\tfrac{2\theta p_{14}}{\sqrt{1-\theta^2}},\\
&p_{34}=\tfrac{(p_{13}-p_{24})\theta}{\sqrt{1-\theta^2}}-p_{12},\ \
p_{44}=\tfrac{2\theta p_{14}}{\sqrt{1-\theta^2}}-\tfrac{p_{22}+\theta^2 p_{11}}{1-\theta^2}.
\end{aligned}
\right.
\end{equation}

\vskip 1mm

Now, we come to discuss {\bf Case (1)-(i)} and show that in this subcase $\theta$ is constant.

For that purpose, we apply for the Codazzi equation \eqref{eqn:2.18} with $(X,Y)=(U,X_i)$ for
$1\leq i\leq4$, and then checking the results we obtain the following equations:
\begin{gather}
3U\lambda-p_{11}b-a(p_{12}\theta+p_{13}\sqrt{1-\theta^2}\,)=0,\label{eqn:5.22}\\
ap_{11}\theta-p_{12}b-ap_{14}\sqrt{1-\theta^2}=0,\label{eqn:5.23}\\
2ap_{14}\theta-1-2p_{13}b+\tfrac{2\sqrt{3}}{\theta}\Gamma_{51}^3+2ap_{11}\sqrt{1-\theta^2}=0,\label{eqn:5.24}\\
\tfrac{\sqrt{3}}{\theta}\Gamma_{51}^4-p_{14}b-ap_{13}\theta+ap_{12}\sqrt{1-\theta^2}=0,\label{eqn:5.25}\\
3U\lambda-p_{22}b+ap_{12}\theta-ap_{24}\sqrt{1-\theta^2}=0,\label{eqn:5.26}
\end{gather}
\begin{equation}\label{eqn:5.27}
\begin{aligned}
\Gamma_{52}^3\sqrt{3(1-\theta^2)}&+b\theta\big[(p_{11}+p_{22})\theta-p_{14}\sqrt{1-\theta^2}\,\big]\\[-1mm]
&+a\theta(p_{12}-p_{12}\theta^2+p_{24}\theta\sqrt{1-\theta^2}\,)=0,
\end{aligned}
\end{equation}
\begin{equation}\label{eqn:5.28}
\begin{aligned}
2\sqrt{3}\Gamma_{52}^4(\theta^2-1)-\theta\big\{&\theta^2-1+2p_{24}b(\theta^2-1)\\[-1mm]
&+2a\big[p_{14}\theta(\theta^2-1)+\sqrt{1-\theta^2}(p_{22}+p_{11}\theta^2)\big]\big\}=0,
\end{aligned}
\end{equation}
\begin{equation}\label{eqn:5.29}
\begin{aligned}
2p_{14}b\theta(\theta^2-1)&+\sqrt{1-\theta^2}\big[(2p_{11}b+p_{22}b+3U\beta)\theta^2-p_{11}b-3U\beta\big]\\[-1mm]
&+a(\theta^2-1)\big[p_{13}-\theta(p_{24}\theta+p_{12}\sqrt{1-\theta^2}\,)\big]=0,
\end{aligned}
\end{equation}
\begin{equation}\label{eqn:5.30}
\begin{aligned}
b(\theta^2-1)&\big[(p_{24}-p_{13})\theta+p_{12}\sqrt{1-\theta^2}\,\big]\\[-1mm]
&+a\big[\theta\sqrt{1-\theta^2}(p_{22}+p_{11}\theta^2)+p_{14}(\theta^4-1)\big]=0,
\end{aligned}
\end{equation}
\begin{equation}\label{eqn:5.31}
\begin{aligned}
2p_{14}b\theta(\theta^2-1)&+\sqrt{1-\theta^2}\big[p_{22}b+3U\beta+(p_{11}b-3U\beta)\theta^2\big]\\[-1mm]
&-a(\theta^2-1)\big[p_{24}+\theta(p_{12}\sqrt{1-\theta^2}-p_{13}\theta\,)\big]=0.
\end{aligned}
\end{equation}

Calculating \eqref{eqn:5.22}\,-\,\eqref{eqn:5.26} and \eqref{eqn:5.29}+\eqref{eqn:5.31}, respectively,
we obtain
\begin{equation}\label{eqn:5.32}
0=(p_{22}-p_{11})b+a\big[-2p_{12}\theta+(p_{24}-p_{13})\sqrt{1-\theta^2}\,\big],
\end{equation}
\begin{equation}\label{eqn:5.33}
\begin{split}
0=&a(1-\theta^2)\big[(p_{24}-p_{13})(1+\theta^2)+2p_{12}\theta\sqrt{1-\theta^2}\,\big]\\[-1mm]
&+b\big\{4p_{14}\theta(\theta^2-1)+\sqrt{1-\theta^2}\big[p_{22}-p_{11}+(3p_{11}+p_{22})\theta^2\big]\big\}.
\end{split}
\end{equation}

Now, we claim that $a\not=0$ holds on $M$.

Indeed, if otherwise, we assume $a(z)=0$ for some $z\in M$. Then, carrying calculations below at $z$, we
have $b=\pm1$ and, by \eqref{eqn:5.32}, \eqref{eqn:5.33}, \eqref{eqn:5.23} and \eqref{eqn:5.30}, we have
\begin{equation}\label{eqn:5.34}
p_{22}-p_{11}=p_{12}=p_{24}-p_{13}=0,\ \ p_{14}=\tfrac{p_{11}\theta}{\sqrt{1-\theta^2}}.
\end{equation}
From \eqref{eqn:5.22} and \eqref{eqn:5.31}, we obtain $U\lambda=-U\beta=\tfrac13p_{11}b$ and thus
$U(\lambda+\beta)=0$. Then, as $\lambda+\beta=\tfrac{\sqrt{1-\theta^2}}{\sqrt{3}\theta}$ and $0<\theta<1$,
we get $U\theta=0$ and thus $U\lambda=U\beta=p_{11}=0$. From \eqref{eqn:5.34}, we have
$p_{11}=p_{12}=p_{22}=p_{14}=0$.

Finally, we apply for $0=g(G(PX_1,PX_2)+PG(X_1,X_2),U)$. By direct calculation of the right hand side,
making use of the fact $G(X_1,X_2)=-\sqrt{\tfrac{1-\theta^2}{3}}\,\xi$, \eqref{eqn:3.7} and \eqref{eqn:5.21},
we get the contradiction $\sqrt{1-\theta^2}b=0$, which verifies the claim.

\vskip 2mm

As $a\not=0$, from \eqref{eqn:5.23} we solve $p_{14}=\tfrac{ap_{11}\theta-p_{12}b}{a\sqrt{1-\theta^2}}$.
Then, from \eqref{eqn:5.32}, \eqref{eqn:5.33} and \eqref{eqn:5.30}, we obtain a
matrix equation $AB=0$, where
$$
A=(p_{22}-p_{11},\ p_{12}, \ p_{24}-p_{13}),
$$
$$
B=\left(
  \begin{array}{ccc}
    b & b(1+\theta^2) & -a\\
    -2a\theta & \tfrac{4b^2\theta+2a^2\theta(1-\theta^2)}{a} & -2b\theta \\[1mm]
    a\sqrt{1-\theta^2} & a\sqrt{1-\theta^2}(1+\theta^2)& b\sqrt{1-\theta^2}
  \end{array}
\right).
$$

The fact $\det B=\tfrac{4\theta\sqrt{1-\theta^2}}{a}\neq0$ implies that
$p_{22}-p_{11}=p_{12}=p_{24}-p_{13}=0$. By \eqref{eqn:5.22} and \eqref{eqn:5.31},
we have $U\lambda=-U\beta=\tfrac13(p_{11}b+ap_{13}\sqrt{1-\theta^2}\,)$.
The fact $0<\theta<1$ and $\lambda+\beta=\tfrac{\sqrt{1-\theta^2}}{\sqrt{3}\theta}$
then implies that $U\theta=0$.
This combining with $X_i\lambda=X_i\beta=0$ for $1\leq i\leq4$ shows that $\theta$ and
so that $\lambda$ and $\beta$ are constants on $M$.

Moreover, from \eqref{eqn:5.22} up to \eqref{eqn:5.31}, we can finally obtain:
\begin{equation}\label{eqn:5.35}
p_{13}=-\tfrac{p_{11}b}{a\sqrt{1-\theta^2}},\ \ p_{14}=\tfrac{p_{11}\theta}{\sqrt{1-\theta^2}},\ \ \Gamma_{51}^3=\Gamma_{52}^4=\tfrac{\theta(-2p_{11}+a\sqrt{1-\theta^2})}{2a\sqrt{3-3\theta^2}},\ \
\Gamma_{51}^4=\Gamma_{52}^3=0.
\end{equation}
Then, by $\sum_{i=1}^4 (p_{1i})^2=1$, we get $(p_{11})^2=a^2(1-\theta^2)$.

Now, calculating the curvature tensor, we obtain
\begin{equation}\label{eqn:5.36}
\begin{aligned}
g(R(X_1,X_3)X_3,X_1)&=\Gamma_{31}^5\Gamma_{53}^1-\Gamma_{13}^5\Gamma_{35}^1-\Gamma_{13}^5\Gamma_{53}^1
=\tfrac{4p_{11}(1+\theta^2)-a\sqrt{1-\theta^2}(5+3\theta^2)}{12a\sqrt{1-\theta^2}}.
\end{aligned}
\end{equation}

On the other hand, by Gauss equation \eqref{eqn:2.17} and the fact $a^2+b^2=1$, we have
\begin{equation}\label{eqn:5.37}
g(R(X_1,X_3)X_3,X_1)=\tfrac{a^2(10\theta^2-7-3\theta^4)-4(p_{11})^2(\theta^2-2)}{12a^2(\theta^2-1)}.
\end{equation}
Comparing these two calculations, we get
$$
(p_{11})^2(2-\theta^2)+3a^2(\theta^2-1)+ap_{11}\sqrt{1-\theta^2}(1+\theta^2)=0.
$$
Then, by using $(p_{11})^2=a^2(1-\theta^2)$, we finally get $p_{11}=a\sqrt{1-\theta^2}$.
It follows that, by \eqref{eqn:5.20}, \eqref{eqn:5.35} and the previous results about $p_{ij}$, we have
\begin{equation}\label{eqn:5.38}
\left\{
\begin{aligned}
&p_{11}=p_{22}=-p_{33}=-p_{44}=a\sqrt{1-\theta^2},\ \ p_{12}=p_{34}=0,\\[-1mm]
&p_{13}=p_{24}=-b,\ p_{14}=-p_{23}=a\theta,\\[-1mm]
&\Gamma_{51}^3=\Gamma_{52}^4=-\tfrac{\theta}{2\sqrt{3}},\ \ \Gamma_{53}^4=\Gamma_{51}^2-\sqrt{\tfrac{1-\theta^2}{3}}.
\end{aligned}\right.
\end{equation}

Later, in Lemma \ref{lemma:5.3}, we will show that {\bf Case (1)-(ii)} occurs only if
$\theta=\tfrac{\sqrt{2}}{2}$. But this implies that {\bf Case (1)-(ii)} is actually
a special situation of {\bf Case (1)-(i)}
with $\theta=\tfrac{\sqrt{2}}{2}$.

\vskip 1mm

{\bf (2). $\theta=1$ on $M$}.

In this case, it is easy to see that $M$ satisfies $A\phi=\phi A$. According to
Proposition 5.7 of \cite{H-Y-Z}, the principal curvatures of $M$ are $\alpha=0$,
$\lambda=\tfrac{\sqrt{3}}{6}$ and $\beta=-\tfrac{\sqrt{3}}{6}$. This exactly
shows that expressions of the principal
curvatures stated in {\bf Case (1)-(i)} are valid also for $\theta=1$.

\vskip 1mm

{\bf (3). $\theta=0$ on $M$}.

In this case, we choose a local orthonormal frame field $\{X_{i}\}_{i=1}^{5}$ of $M$ such that
$$
AX_1=\lambda X_1,\ AX_2=\beta X_2,\ AX_3=\lambda X_3,\ AX_4=\beta X_4,\ X_5=U,
$$
where $X_2=JX_1$ and $X_4=JX_3$. Then $G(X_1,X_3)\in {\rm Span}\{\xi,U\}$
and $|G(X_1,X_3)|^2=\tfrac{1}{3}$.
Now, taking in \eqref{eqn:2.19} $(X,Y)=(X_1,X_2)$, $(X_1,X_3)$ and $(X_1,X_4)$, respectively, we obtain
\begin{equation}\label{eqn:5.39}
\alpha\beta+\alpha\lambda-2\lambda\beta=-\tfrac{1}{6},
\end{equation}
\begin{equation}\label{eqn:5.40}
(\alpha-\lambda)g(G(X_1,X_3),\xi)=0,\ \
(2\alpha-\lambda-\beta)g(G(X_1,X_3),U)=0.
\end{equation}

From \eqref{eqn:5.40}, $\alpha\neq\lambda$ and $|G(X_1,X_3)|^2=\tfrac{1}{3}$, we get $2\alpha-\lambda-\beta=0$.
This combining with \eqref{eqn:5.39} gives the contradiction $(\lambda-\beta)^2=-\tfrac{1}{3}$.

We have completed the proof of Lemma \ref{lemma:5.2}.
\end{proof}

\begin{lemma}\label{lemma:5.3}
If {\bf Case (1)-(ii)} in the proof of Lemma \ref{lemma:5.2} does occur, then
$\theta=\tfrac{\sqrt{2}}2$.
\end{lemma}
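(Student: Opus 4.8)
The plan is to run, within \textbf{Case (1)-(ii)}, the same Gauss--Codazzi analysis that settled \textbf{Case (1)-(i)}, and to show that its final compatibility relation forces $\theta=\tfrac{\sqrt2}2$. Everything established before the split into the two subcases---the connection coefficients \eqref{eqn:5.18}--\eqref{eqn:5.19}, the vanishing $X_i\theta=0$ together with the accompanying relations among the $\Gamma^k_{ij}$, the identity \eqref{eqn:5.20}, and the formulae \eqref{eqn:5.21} for the components $p_{ij}$ of $PX_i$---is written in terms of $\lambda,\beta,\theta$ alone and so remains available. First I would substitute the \textbf{Case (1)-(ii)} values
$$
\alpha=\tfrac{1-2\theta^2}{\theta\sqrt{3(1-\theta^2)}},\quad
\lambda=\tfrac{2-3\theta^2+\theta}{2\theta\sqrt{3(1-\theta^2)}},\quad
\beta=\tfrac{2-3\theta^2-\theta}{2\theta\sqrt{3(1-\theta^2)}}
$$
into the Codazzi equation \eqref{eqn:2.18} at $(X,Y)=(U,X_i)$, $1\le i\le4$, to produce the analogue of the system \eqref{eqn:5.22}--\eqref{eqn:5.31}; the difference from \textbf{Case (1)-(i)} is that $\alpha\ne0$ (as long as $\theta\ne\tfrac{\sqrt2}2$), so additional $\alpha$-terms now survive.

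From this system I would next extract, exactly as before, the off-diagonal vanishing $p_{22}-p_{11}=p_{12}=p_{24}-p_{13}=0$. The argument that $a\ne0$ carries over verbatim, after which the relevant linear combinations assemble into a matrix equation $AB=0$ in the three unknowns $(p_{22}-p_{11},\,p_{12},\,p_{24}-p_{13})$; it then suffices to verify that the coefficient determinant is nonzero for $0<\theta<1$. Substituting the vanishing back produces expressions for $U\lambda$ and $U\beta$, and since $\lambda+\beta$ is a fixed function of $\theta$, the reasoning following \eqref{eqn:5.35} yields $U\theta=0$, so $\theta$ (hence $\alpha,\lambda,\beta$) is constant; the same conclusion is corroborated by Lemma \ref{lemma:2.2}. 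The normalization $\sum_{i=1}^4(p_{1i})^2=1$ then expresses $(p_{11})^2$ as a function of $a$ and $\theta$, the analogue of $(p_{11})^2=a^2(1-\theta^2)$.

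Finally I would compute the sectional curvature $g(R(X_1,X_3)X_3,X_1)$ twice: intrinsically from the connection coefficients, as in \eqref{eqn:5.36}, and via the Gauss equation \eqref{eqn:2.17} together with $a^2+b^2=1$, as in \eqref{eqn:5.37}. Equating the two expressions and eliminating $p_{11}$ through the normalization above should collapse the entire system to a single polynomial identity in $\theta$. I expect this identity to carry the factor $(2\theta^2-1)$---equivalently the factor $\alpha$---so that its only admissible root in $(0,1)$ is $\theta=\tfrac{\sqrt2}2$. The main difficulty is organizational rather than conceptual: controlling the enlarged $\alpha\ne0$ Codazzi system and confirming that the coefficient determinant in the $AB=0$ step does not vanish, so that the off-diagonal $p_{ij}$ genuinely drop out and the curvature comparison can be carried through. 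Since $\theta=\tfrac{\sqrt2}2$ returns $\alpha=0$, this indeed realizes \textbf{Case (1)-(ii)} as the special instance $\theta=\tfrac{\sqrt2}2$ of \textbf{Case (1)-(i)}, as anticipated in the proof of Lemma \ref{lemma:5.2}.
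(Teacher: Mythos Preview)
Your plan follows the paper's own argument almost exactly: use Lemma~\ref{lemma:2.2} to see that $\alpha$ (hence $\theta,\lambda,\beta$) is constant, rerun the Codazzi equations at $(U,X_i)$, extract $a\ne0$ and the vanishing $p_{22}-p_{11}=p_{12}=p_{24}-p_{13}=0$, obtain $(p_{11})^2=a^2(1-\theta^2)$ from $\sum(p_{1i})^2=1$, and then compare curvature computed from the connection with curvature from the Gauss equation.

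There is, however, one real gap in your final step. A \emph{single} curvature comparison $g(R(X_1,X_3)X_3,X_1)$ is not enough to force $\theta=\tfrac{\sqrt2}2$. In Case~(1)-(ii) the connection coefficients $\Gamma_{51}^3=\Gamma_{52}^4$ take new values (because $\alpha\ne0$ changes the relevant Codazzi equations), and when you substitute the normalization $(p_{11})^2=a^2(1-\theta^2)$ into the resulting relation, the sign of $p_{11}$ is \emph{not} determined; for the ``wrong'' sign the equation has an admissible root in $(0,1)$ different from $\tfrac{\sqrt2}2$. Unlike in Case~(1)-(i), the single comparison here does not collapse to a linear relation in $p_{11}$ that fixes the sign, and it does not factor as $(2\theta^2-1)\cdot(\text{nonzero})$.

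The paper fixes this by computing a \emph{second} curvature component, $g(R(X_1,X_3)X_4,X_2)$, by both methods. Equating the two computations gives two polynomial relations (the analogues of \eqref{eqn:5.36}--\eqref{eqn:5.37}); subtracting them eliminates the cross term $a\,p_{11}$ and yields directly
\[
(p_{11})^2\theta^2=a^2(1-\theta^2)^2,
\]
which together with $(p_{11})^2=a^2(1-\theta^2)$ gives $\theta^2=1-\theta^2$, i.e.\ $\theta=\tfrac{\sqrt2}2$. So your outline is right, but you need to add the second curvature identity $g(R(X_1,X_3)X_4,X_2)$ and take the difference; only then does the expected factor appear.
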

\begin{proof}
First of all, according to Lemma \ref{lemma:2.2}, $\alpha$ is constant. Hence,
by the formulas for {\bf Case (1)-(ii)} of the proof of Lemma
\ref{lemma:5.2}, also $\theta, \lambda$ and $\beta$
are constants. Now, since the local orthonormal frame field $\{X_{i}\}_{i=1}^{5}$ of $M$
satisfy \eqref{eqn:5.10}, we apply for the Codazzi equation \eqref{eqn:2.18} with
$(X,Y)=(U,X_i)$ for $1\leq i\leq4$. Then, by checking the results, as in {\bf Case (1)-(i)}
we obtain the equations \eqref{eqn:5.22}, \eqref{eqn:5.23}, \eqref{eqn:5.26} and
\eqref{eqn:5.29}--\eqref{eqn:5.31} with $U\lambda=U\beta=0$. Moreover, we have the
following additional four equations:
\begin{gather}
\theta\big\{2\sqrt{3}\Gamma_{51}^3+\theta-2p_{13}b\sqrt{1-\theta^2}
+2a\big[p_{11}(1-\theta^2)+p_{14}\theta\sqrt{1-\theta^2}\,\big]\big\}-1=0,\label{eqn:5.41}\\
\sqrt{3}\Gamma_{51}^4-p_{14}b\sqrt{1-\theta^2}+
a\big[p_{12}(1-\theta^2)-p_{13}\theta\sqrt{1-\theta^2}\,\big]=0,\label{eqn:5.42}\\
\sqrt{3}\Gamma_{52}^3+(p_{11}+p_{22})b\theta-p_{14}b\sqrt{1-\theta^2}
+a\big[p_{12}(1-\theta^2)+p_{24}\theta\sqrt{1-\theta^2}\,\big]=0,\label{eqn:5.43}\\
\theta\big\{2\sqrt{3}\Gamma_{52}^4+\theta-2p_{24}b\sqrt{1-\theta^2}
+2a\big[p_{22}+\theta(p_{11}\theta-p_{14}\sqrt{1-\theta^2}\,)\big]\big\}-1=0.\label{eqn:5.44}
\end{gather}
It follows that \eqref{eqn:5.32} and \eqref{eqn:5.33} are still valid. Then, similar discussions
as in dealing with {\bf Case (1)-(i)}, we have
$$
a\neq0,\ (p_{11})^2=a^2(1-\theta^2),\ p_{22}=p_{11},\ p_{12}=0,\ p_{13}=p_{24}=-\tfrac{p_{11}b}{a\sqrt{1-\theta^2}},
\ p_{14}=\tfrac{p_{11}\theta}{\sqrt{1-\theta^2}}.
$$
Moreover, by using the equations \eqref{eqn:5.41}\,--\,\eqref{eqn:5.44}, we can get
$$
\Gamma_{51}^3=\Gamma_{52}^4=\tfrac{a-2p_{11}\theta-a\theta^2}{2\sqrt{3}a\theta},\ \ \Gamma_{51}^4=\Gamma_{52}^3=0.
$$

Now, calculating the curvature tensor, we obtain
$$
\begin{aligned}
&g(R(X_1,X_3)X_4,X_2)=\Gamma_{34}^5\Gamma_{15}^2-\Gamma_{13}^5\Gamma_{54}^2+\Gamma_{31}^5\Gamma_{54}^2
=\tfrac{a(6\theta^2-4-3\theta^4)-4p_{11}\theta(\theta^2-2)}{12a\theta^2},\\
&g(R(X_1,X_3)X_3,X_1)=\Gamma_{31}^5\Gamma_{53}^1-\Gamma_{13}^5\Gamma_{35}^1-\Gamma_{13}^5\Gamma_{53}^1
=\tfrac{a(11\theta^2-8-3\theta^4)-4p_{11}\theta(\theta^2-2)}{12a\theta^2}.
\end{aligned}
$$

On the other hand, by the Gauss equation \eqref{eqn:2.17} and the fact $a^2+b^2=1$, we have
$$
\begin{aligned}
&g(R(X_1,X_3)X_4,X_2)=\tfrac{4(p_{11})^2\theta^2+a^2(2-3\theta^2)(1-\theta^2)}{12a^2(1-\theta^2)},\\
&g(R(X_1,X_3)X_3,X_1)=\tfrac{4(p_{11})^2\theta^2(2-\theta^2)-a^2(1-\theta^2)^2(4+3\theta^2)}{12a^2\theta^2(\theta^2-1)}.
\end{aligned}
$$
Comparing these two calculations, respectively, we can obtain
\begin{equation}\label{eqn:5.45}
(p_{11})^2\theta^4-ap_{11}\theta(2-\theta^2)(1-\theta^2)+a^2(1-\theta^2)^2=0,
\end{equation}
\begin{equation}\label{eqn:5.46}
(p_{11})^2\theta^2(\theta^2-2)-ap_{11}\theta(2-\theta^2)(1-\theta^2)+3a^2(1-\theta^2)^2=0.
\end{equation}
Now the calculation \eqref{eqn:5.45}-\eqref{eqn:5.46} gives that
$$
(p_{11})^2\theta^2=a^2(1-\theta^2)^2,
$$
and, by using the fact $(p_{11})^2=a^2(1-\theta^2)$, we obtain $\theta=\tfrac{\sqrt{2}}{2}$.

This completes the proof of Lemma \ref{lemma:5.3}.
\end{proof}

Based on Lemma \ref{lemma:5.2}, we can prove the following result for Hopf hypersurfaces which
is an interesting counterpart of Proposition 5.8 in \cite{H-Y-Z}.
\begin{proposition}\label{prop:5.1}
Let $M$ be a Hopf hypersurface of the NK $\mathbf{S}^3\times\mathbf{S}^3$ with three distinct principal
curvatures and assume that the almost product structure $P$ of $M$ preserves the holomorphic distribution,
i.e., $P\{U\}^\perp=\{U\}^\perp$. Then either
$P\xi=\tfrac12\xi+\tfrac{\sqrt{3}}{2}J\xi$, or $P\xi=\tfrac12\xi-\tfrac{\sqrt{3}}{2}J\xi$, or
$P\xi=-\xi$.
\end{proposition}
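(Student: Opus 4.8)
The plan is to pin down the pair $(a,b)$ in the expression $P\xi=a\xi+bU$ (equation \eqref{eqn:2.23}, which holds because $P\{U\}^\perp=\{U\}^\perp$ forces $\dim\mathfrak{D}=2$), where $U=-J\xi$ and $a^2+b^2=1$. Since $J\xi=-U$, the three target forms of $P\xi$ correspond exactly to $(a,b)=(\tfrac12,-\tfrac{\sqrt3}{2})$, $(a,b)=(\tfrac12,\tfrac{\sqrt3}{2})$ and $(a,b)=(-1,0)$. As these all lie on the unit circle $a^2+b^2=1$ and satisfy $(2a-1)(a+1)=0$, it suffices to produce the single scalar identity $2a^2+a-1=0$; the admissible values of $b$ are then forced by $a^2+b^2=1$. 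By Lemma \ref{lemma:5.2} we may work in the frame of \textbf{Case (1)-(i)} (with $0<\theta<1$), for which all relations of \eqref{eqn:3.7} hold, $G(X_1,X_2)=-\sqrt{(1-\theta^2)/3}\,\xi$, and the action of $P$ on $\{X_1,X_2,X_3,X_4\}$ is given by \eqref{eqn:5.38}.

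The heart of the argument is to evaluate the compatibility relation \eqref{eqn:2.9}, namely $PG(X_1,X_2)+G(PX_1,PX_2)=0$. On the one hand, $PG(X_1,X_2)=-\sqrt{(1-\theta^2)/3}\,(a\xi+bU)$ directly from \eqref{eqn:2.23}. On the other hand, I would substitute the expressions for $PX_1$ and $PX_2$ from \eqref{eqn:5.38} and expand $G(PX_1,PX_2)$ bilinearly, using the values of $G(X_i,X_j)$ recorded in \eqref{eqn:3.7}, all of which are scalar multiples of $\xi$ or of $U$. A short bookkeeping of the $\xi$- and $U$-parts, in which the $\theta$-dependent factors are expected to cancel after using $a^2+b^2=1$, should yield $G(PX_1,PX_2)=\sqrt{(1-\theta^2)/3}\,[(b^2-a^2)\xi+2ab\,U]$. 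Comparing components in \eqref{eqn:2.9} then gives the $\xi$-equation $b^2=a^2+a$ and the $U$-equation $b(2a-1)=0$.

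Combining $b^2=a^2+a$ with $a^2+b^2=1$ immediately produces $2a^2+a-1=0$, hence $a=\tfrac12$ or $a=-1$; the companion equation $b(2a-1)=0$ is automatically consistent in both cases. For $a=\tfrac12$ we get $b=\pm\tfrac{\sqrt3}{2}$, i.e. $P\xi=\tfrac12\xi\mp\tfrac{\sqrt3}{2}J\xi$, and for $a=-1$ we get $b=0$, i.e. $P\xi=-\xi$, which are exactly the three asserted possibilities. The main obstacle I anticipate is purely organizational: the bilinear expansion of $G(PX_1,PX_2)$ involves all of the off-diagonal $G(X_i,X_j)$ and must be tracked carefully so that the $\theta$-terms cancel; beyond this, the only genuinely separate point is the boundary value $\theta=1$, where the frame \eqref{eqn:3.7} degenerates. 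There one has $A\phi=\phi A$, and the same conclusion follows either by repeating the computation with the $J$-invariant frame and the pair $(X_1,X_3)$, for which $G(X_1,X_3)$ is again a multiple of $\xi$, or by invoking the corresponding statement (Proposition 5.8) of \cite{H-Y-Z}.
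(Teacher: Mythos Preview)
Your proposal is correct and follows essentially the same route as the paper: both exploit the identity \eqref{eqn:2.9} for the pair $(X_1,X_2)$ in the frame of Case~(1)-(i), with the $\theta=1$ endpoint handled via Proposition~5.8 of \cite{H-Y-Z}. The only cosmetic difference is that the paper reads off just the $\xi$-component, obtaining $(1-2a)(1+a)=0$ directly (after using $a^2+b^2=1$), whereas you also record the $U$-component $b(2a-1)=0$, which is redundant.
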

\begin{proof}
We first assume that $0<\theta<1$. Let $\{X_i\}_{i=1}^5$ be as described by \eqref{eqn:5.10}.
Then, by using \eqref{eqn:3.7}, \eqref{eqn:5.38} and the fact $G(X_1,X_2)=-\sqrt{(1-\theta^2)/3}\,\xi$,
we can show that the equation $0=g(G(PX_1,PX_2)+PG(X_1,X_2),\xi)$ becomes equivalently
$$
(1-2a)(1+a)=0.
$$
This implies the assertion that we have three possibilities for $P\xi$, namely,

(1) $a=\tfrac{1}{2}$ and $b=-\tfrac{\sqrt{3}}{2}$, \ (2) $a=\tfrac{1}{2}$ and $b=\tfrac{\sqrt{3}}{2}$, \
(3) $a=-1$ and $b=0$.

\vskip 1mm

Next, if $\theta=1$, then as stated before the hypersurface satisfies $A\phi=\phi A$ and the assertion
follows from Proposition 5.8 of \cite{H-Y-Z}.
\end{proof}

\vskip 1mm

For the sake of later's purpose, we summarize the following conclusion that we have established.
\begin{lemma}\label{lemma:5.4}
For $0<\theta<1$ with $\alpha=0,\ \lambda=\tfrac{\sqrt{1-\theta^2}+1}{2\sqrt{3}\theta}$
and $\beta=\tfrac{\sqrt{1-\theta^2}-1}{2\sqrt{3}\theta}$, the vector $P\xi$ has three possibilities:
$\tfrac12\xi+\tfrac{\sqrt{3}}{2}J\xi,\ \tfrac12\xi-\tfrac{\sqrt{3}}{2}J\xi,\ -\xi$.
For each of these cases, we have a local orthonormal frame $\{X_i\}_{i=1}^5$, which is
described by \eqref{eqn:5.10}, such that $PX_i=\sum_{j=1}^4p_{ij}X_j$ for $1\leq i\leq4$, and $\{p_{ij}\}$
satisfy \eqref{eqn:5.38}. Moreover, with respect to $\{X_i\}_{i=1}^5$, the connection
coefficients $\{\Gamma_{ij}^k\}$ satisfy \eqref{eqn:5.18}, \eqref{eqn:5.19}, \eqref{eqn:5.38}, as well as
the following relations:
\begin{equation}\label{eqn:5.47}
\left\{
\begin{aligned}
&\Gamma_{ij}^k=0,\ {\rm if}\ i\in\{1,2,3,4\},\ j\in\{1,2\},\ k\in\{3,4\};\\
&\Gamma_{14}^3=\Gamma_{12}^1,\ \Gamma_{23}^4=\Gamma_{21}^2,\ \Gamma_{33}^4=\Gamma_{31}^2,\ \Gamma_{43}^4=\Gamma_{41}^2,\ \Gamma_{51}^4=\Gamma_{52}^3=0.
\end{aligned}\right.
\end{equation}
\end{lemma}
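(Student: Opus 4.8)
The statement is a consolidation of facts already extracted in the proofs of Lemmas \ref{lemma:5.2} and \ref{lemma:5.3} and of Proposition \ref{prop:5.1}, so the plan is to reassemble these pieces and to verify that none of the derivations depended on the particular configuration of $P\xi$. First I would reduce to the setting of \textbf{Case (1)-(i)}. Under the standing hypothesis $P\{U\}^\perp=\{U\}^\perp$ together with $0<\theta<1$, Lemma \ref{lemma:5.2} leaves precisely the two solution families \textbf{Case (1)-(i)} and \textbf{Case (1)-(ii)} for the triple $(\alpha,\lambda,\beta)$; Lemma \ref{lemma:5.3} shows that \textbf{Case (1)-(ii)} can occur only for $\theta=\tfrac{\sqrt2}2$, at which value its curvatures collapse onto those of \textbf{Case (1)-(i)}. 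Hence for every $0<\theta<1$ the principal curvatures are forced to be $\alpha=0$, $\lambda=\tfrac{\sqrt{1-\theta^2}+1}{2\sqrt3\theta}$, $\beta=\tfrac{\sqrt{1-\theta^2}-1}{2\sqrt3\theta}$, and we may work throughout with the frame \eqref{eqn:5.10} under the normalization $G(X_1,X_2)=-\sqrt{(1-\theta^2)/3}\,\xi$ adopted there.

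Next I would read off the three possibilities for $P\xi$ directly from Proposition \ref{prop:5.1}. Since \eqref{eqn:2.23} gives $P\xi=a\xi+bU$ and $U=-J\xi$, the three admissible pairs $(a,b)=(\tfrac12,-\tfrac{\sqrt3}2),\,(\tfrac12,\tfrac{\sqrt3}2),\,(-1,0)$ translate respectively into $P\xi=\tfrac12\xi+\tfrac{\sqrt3}2J\xi$, $P\xi=\tfrac12\xi-\tfrac{\sqrt3}2J\xi$ and $P\xi=-\xi$, which are exactly the three listed vectors. It then remains to record that for each such configuration the frame relations of the lemma hold. Here the key observation is that every computation carried out in \textbf{Case (1)-(i)} — the evaluation of $G(X_i,\xi)$ producing \eqref{eqn:5.18} and \eqref{eqn:5.19}; the Codazzi equations \eqref{eqn:5.22}--\eqref{eqn:5.31} together with the identities $g(G(X_i,X_j),X_k)=0$ yielding the relations \eqref{eqn:5.47} and the preliminary forms $p_{22}=p_{11}$, $p_{12}=0$, $p_{13}=p_{24}=-\tfrac{p_{11}b}{a\sqrt{1-\theta^2}}$, $p_{14}=\tfrac{p_{11}\theta}{\sqrt{1-\theta^2}}$; and the curvature comparison \eqref{eqn:5.36}--\eqref{eqn:5.37} forcing $p_{11}=a\sqrt{1-\theta^2}$ — used only $a\neq0$ and $a^2+b^2=1$, and never a particular numerical value of $(a,b)$. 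Since $a\neq0$ holds in each of the three cases ($a=\tfrac12$ or $a=-1$), these derivations apply verbatim, and substituting $p_{11}=a\sqrt{1-\theta^2}$ into the preceding relations together with \eqref{eqn:5.20} produces \eqref{eqn:5.38}.

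The point requiring care, and the step I expect to be the genuine obstacle, is precisely this claim of uniformity: I must confirm that the reduction to $a\neq0$ (the matrix $B$ in \textbf{Case (1)-(i)} has $\det B=\tfrac{4\theta\sqrt{1-\theta^2}}a\neq0$) and all subsequent identifications are insensitive to the choice among the three $P\xi$-configurations, rather than silently assuming, say, $a=\tfrac12$. A secondary subtlety is to keep the sign normalization consistent: the relations \eqref{eqn:5.38} and \eqref{eqn:5.47} are tied to the choice $G(X_1,X_2)=-\sqrt{(1-\theta^2)/3}\,\xi$, and one should check that the passage from the opposite sign (handled in the proof of Lemma \ref{lemma:5.2} by replacing $\xi$ with $-\xi$) does not disturb the stated formulas. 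Once these two verifications are in place, the lemma follows by simply collecting \eqref{eqn:5.18}, \eqref{eqn:5.19}, \eqref{eqn:5.38} and \eqref{eqn:5.47}.
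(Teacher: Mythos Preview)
Your proposal is correct and matches the paper's approach: Lemma \ref{lemma:5.4} is presented in the paper without a separate proof, merely as a summary of facts already established in the proofs of Lemma \ref{lemma:5.2} (Case (1)-(i)), Lemma \ref{lemma:5.3}, and Proposition \ref{prop:5.1}. Your observation that all the relevant derivations used only $a\neq0$ and $a^2+b^2=1$, never a specific value of $(a,b)$, is exactly the point that makes the consolidation valid for each of the three configurations of $P\xi$.
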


\vskip 2mm

\subsection{Proof of Theorem \ref{thm:1.2}}\label{sect:5.3}~

We get the proof of Theorem \ref{thm:1.2} as a direct consequence of three results
concerning the three possibilities for $P\xi$ described in Proposition \ref{prop:5.1}.
First of all, we prove the following result:

\begin{theorem}\label{thm:5.1}
Let $M$ be a Hopf hypersurface of the NK $\mathbf{S}^3\times\mathbf{S}^3$ which
possesses three distinct principal curvatures and satisfies $P\{U\}^\perp=\{U\}^\perp$
on $M$. If $P\xi=\tfrac{1}{2}\xi+\tfrac{\sqrt{3}}{2}J\xi$, then, up to isometries
of type $\mathcal{F}_{abc}$, $M$ is locally given by
the embedding $f_r$ $(0<r\leq1)$ in Theorem \ref{thm:1.2}.
\end{theorem}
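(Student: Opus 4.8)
The plan is to treat the two ranges of $\theta$ separately. When $\theta=1$, Lemma \ref{lemma:5.2} together with its proof (Case (2)) shows that $M$ satisfies $A\phi=\phi A$ with $\alpha=0$, $\lambda=\tfrac{\sqrt3}{6}$, $\beta=-\tfrac{\sqrt3}{6}$; since moreover $P\xi=\tfrac12\xi+\tfrac{\sqrt3}{2}J\xi$, I would invoke the $A\phi=\phi A$ classification of \cite{H-Y-Z} (Propositions 5.7 and 5.8 there) to conclude directly that $M$ is, up to an isometry $\mathcal{F}_{abc}$, the image of $f_1$, i.e. $M_1^{(1)}$. It then remains to handle the generic range $0<\theta<1$, for which Lemma \ref{lemma:5.4} already records the shape operator, the action of $J$, $P$ and $G$, and a large part of the connection coefficients $\{\Gamma_{ij}^k\}$ in the adapted frame $\{X_i\}_{i=1}^5$ of \eqref{eqn:5.10}.

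First I would complete the determination of $\{\Gamma_{ij}^k\}$. The relations in \eqref{eqn:5.47}, \eqref{eqn:5.18}, \eqref{eqn:5.19} and \eqref{eqn:5.38} leave undetermined essentially the ``horizontal'' coefficients such as $\Gamma_{12}^1,\Gamma_{21}^2,\Gamma_{31}^2,\Gamma_{41}^2$ and $\Gamma_{51}^2$ (hence $\Gamma_{53}^4$). To pin these down I would write out the Codazzi equation \eqref{eqn:2.18} for the remaining pairs $(X,Y)=(X_i,X_j)$ with $i,j\in\{1,2,3,4\}$, and combine them with the derivative identity \eqref{eqn:2.8} for $P$ applied to $(X_i,X_j)$: the left side $(\tilde\nabla_{X_i}P)X_j=\tilde\nabla_{X_i}(PX_j)-P(\tilde\nabla_{X_i}X_j)$ is expressible through the now-known $p_{ij}$ of \eqref{eqn:5.38} and the unknown $\Gamma$'s, while its right side is known from $G$; the algebraic identity \eqref{eqn:2.9} supplies further constraints. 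Comparing selected components of the Gauss equation \eqref{eqn:2.17} with a direct curvature computation (as already exploited for \eqref{eqn:5.36}--\eqref{eqn:5.37}) should close the system. I expect the outcome to be that all connection coefficients are constants explicit in $\theta$, with the eigendistributions organizing into integrable pieces that mirror the product $\mathbf S^3\times\mathbf S^2$ underlying $f_r$.

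With the full first-order data in hand, the second step is to integrate and recognize the immersion. Writing the position as $F=(p,q)\colon M\to\mathbf S^3\times\mathbf S^3\subset\mathbb H\times\mathbb H$ and each frame vector in quaternionic components via \eqref{eqn:2.1} and \eqref{eqn:2.7}, I would use \eqref{eqn:2.11} to pass from $\tilde\nabla$ to the flat derivative on $\mathbb H^2$ (equivalently, $\nabla^E$ corrected by the standard second fundamental forms of the two $\mathbf S^3$ factors) and, together with the Gauss--Weingarten formulas \eqref{eqn:2.15}, obtain a closed first-order PDE system for the quaternion-valued functions $p,q$ and the components of $dp,dq$. Normalizing the base point and the initial frame by an ambient isometry of type $\mathcal{F}_{abc}$, I would solve this system explicitly; I anticipate that $p$ sweeps out all of $\mathbf S^3$ while $q$ is constrained to a latitude $2$-sphere $\{\operatorname{Re}q=\text{const}\}$, yielding $F=f_r$ with the radius fixed by matching the principal curvatures of Remark \ref{rem:4.1} to those of Lemma \ref{lemma:5.2}, namely $r=\sqrt{3\theta^2/(1+2\theta^2)}$ (so that $\theta=1$ indeed corresponds to $r=1$).

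Conceptually this is an application of the existence-and-uniqueness (congruence) theorem for submanifolds of the homogeneous NK $\mathbf S^3\times\mathbf S^3$: since the model $M_1^{(r)}$ of Example \ref{exam:4.1} realizes exactly the same complete set of first- and second-order invariants (shape operator, $J$, $P$, $G$, connection) for the matched parameter, $M$ must be locally congruent to $M_1^{(r)}$ through an ambient isometry, which the explicit solution shows may be chosen in the family $\mathcal{F}_{abc}$. The main obstacle is precisely the integration step: assembling and solving the quaternionic PDE system and recognizing its solution as $f_r$ in closed form. A secondary subtlety is discharging the completion of the connection coefficients consistently, that is, checking that the over-determined Codazzi, Gauss and $\nabla P$ relations are compatible and leave no residual freedom, and confirming that the normalization by $\mathcal{F}_{abc}$ is rich enough to absorb the choice of initial frame.
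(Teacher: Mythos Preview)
Your strategy is sound in principle---completing the connection, assembling the Gauss--Weingarten system in $\mathbb H^2$, and integrating---but the paper follows a considerably shorter path that you miss entirely: it exploits the \emph{usual} product structure $Q$ of $\mathbf S^3\times\mathbf S^3$ (see \eqref{eqn:2.10}). Concretely, the paper introduces an explicit (non-orthonormal) frame $\{\bar e_i\}_{i=1}^5$, built linearly from the $X_i$ of \eqref{eqn:5.10}, with the property that $Q\bar e_1=\bar e_1$, $Q\bar e_2=\bar e_2$ and $Q\bar e_j=-\bar e_j$ for $j=3,4,5$. Using only the data already recorded in Lemma~\ref{lemma:5.4} (no further completion of the $\Gamma_{ij}^k$ is needed), the paper checks that $\mathrm{Span}\{\bar e_1,\bar e_2\}$ and $\mathrm{Span}\{\bar e_3,\bar e_4,\bar e_5\}$ are integrable, with the $3$-dimensional leaves totally geodesic and the $2$-dimensional leaves totally umbilical in $\mathbf S^3\times\mathbf S^3$, of constant sectional curvature $\tfrac34$ and $\tfrac{1+2\theta^2}{4\theta^2}$ respectively. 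Writing $f=(p,q)$ and using $(dp(Z),0)=\tfrac12(df(Z)-Q\,df(Z))$, $(0,dq(Z))=\tfrac12(df(Z)+Q\,df(Z))$, the $Q$-eigenvalue pattern immediately forces $p$ to depend only on the $3$-dimensional factor and $q$ only on the $2$-dimensional factor; then $p:\mathbf S^3\to\mathbf S^3$ is a local isometry and $q:\mathbf S^2\to\mathbf S^3$ a totally umbilical immersion, so after normalizing by $\mathcal F_{abc}$ one reads off $f=f_r$ with $r=\sqrt{3}\,\theta/\sqrt{1+2\theta^2}$. In other words, the product structure $Q$ does the ``integration'' for you: there is no quaternionic PDE to solve.

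Compared with this, your plan has two costs. First, completing the full table of $\Gamma_{ij}^k$ via Codazzi, $\tilde\nabla P$ and Gauss is unnecessary; the paper never needs the horizontal coefficients beyond what Lemma~\ref{lemma:5.4} already contains. Second, your integration step is left as an expectation (``I anticipate that $p$ sweeps out all of $\mathbf S^3$\ldots''), and carrying it out in practice without the $Q$-trick would be substantially heavier. Your handling of the case $\theta=1$ by invoking the $A\phi=\phi A$ classification from \cite{H-Y-Z} agrees with the paper (which cites Theorem~5.9 there), and your formula for $r$ in terms of $\theta$ is correct.
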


\begin{proof}
We first assume that $0<\theta<1$ and let $\{X_i\}_{i=1}^5$ be as described by \eqref{eqn:5.10}.
Put
\begin{equation}\label{eqn:5.48}
\left\{
\begin{aligned}
&\bar{e}_1=\tfrac{\sqrt{2+\sqrt{1-\theta^2}}}{2}X_1-\tfrac{\sqrt{3}}{2\sqrt{2+\sqrt{1-\theta^2}}}X_3
+\tfrac{\theta}{2\sqrt{2+\sqrt{1-\theta^2}}}X_4,\ \bar{e}_5=X_5=U,\\[-1mm]
&\bar{e}_2=\tfrac{\sqrt{2+\sqrt{1-\theta^2}}}{2}X_2-\tfrac{\theta}{2\sqrt{2+\sqrt{1-\theta^2}}}X_3
-\tfrac{\sqrt{3}}{2\sqrt{2+\sqrt{1-\theta^2}}}X_4,\\
&\bar{e}_3=\tfrac{\theta}{\sqrt{2+2\sqrt{1-\theta^2}}}X_2+\tfrac{\sqrt{1+\sqrt{1-\theta^2}}}{\sqrt{2}}X_3, \ \
\bar{e}_4=\tfrac{\theta}{\sqrt{2+2\sqrt{1-\theta^2}}}X_1-\tfrac{\sqrt{1+\sqrt{1-\theta^2}}}{\sqrt{2}}X_4.
\end{aligned}
\right.
\end{equation}
Then $\{\bar{e}_i\}_{i=1}^5$ is a local (non-orthonormal) frame field of $M$. We
consider the following decomposition of the tangent bundle of $M$:
$TM={\rm Span}\{\bar{e}_1,\bar{e}_2\}\oplus{\rm Span}\{\bar{e}_3,\bar{e}_4,\bar{e}_5\}$.

Using Lemma \ref{lemma:5.4}, we have
$$
\nabla_{\bar{e}_i} {\bar{e}_j}\in {\rm
Span}\{\bar{e}_1,\bar{e}_2,\bar{e}_5\}\ {\rm for}\ i,j=1,2; \ \ \
\nabla_{\bar{e}_i} {\bar{e}_j}\in {\rm
Span}\{\bar{e}_3,\bar{e}_4,\bar{e}_5\}\ {\rm for}\ i,j=3,4,5.
$$
Moreover, by direct calculation, we can show that
$$
[\bar{e}_i,\bar{e}_j]\in {\rm Span}\{\bar{e}_1,\bar{e}_2\}\ {\rm for}\ i,j=1,2;\ \ \
[\bar{e}_i,\bar{e}_j]\in {\rm Span}\{\bar{e}_3,\bar{e}_4,\bar{e}_5\}\ {\rm for}\ i,j=3,4,5.
$$
It follows that both $\rm{Span}\{\bar{e}_1,\bar{e}_2\}$ and $\rm{Span}\{\bar{e}_3,\bar{e}_4,\bar{e}_5\}$ are integrable
distributions. Let $M_1$ and $M_2$ be the integral manifolds of $\rm{Span}\{\bar{e}_3,\bar{e}_4,\bar{e}_5\}$
and $\rm{Span}\{\bar{e}_1,\bar{e}_2\}$, respectively. Note also that now we have
$$
g(A\bar{e}_i,\bar{e}_j)=0\ {\rm for}\ i,j=3,4,5;\ \ g(A\bar{e}_i,\bar{e}_j)=\tfrac{\sqrt{3(1-\theta^2)}}{4\theta}\delta_{ij}\ {\rm for}\ i,j=1,2.
$$
So we have $\tilde{\nabla}_{\bar{e}_i}{\bar{e}_j}\in TM_1\ {\rm for}\ i,j=3,4,5$; and $\tilde{\nabla}_{\bar{e}_i}{\bar{e}_j}=\hat{\nabla}_{\bar{e}_i}{\bar{e}_j}+\hat{h}(\bar{e}_i,\bar{e}_j)\ {\rm for}\ i,j=1,2$,
where $\hat{\nabla}$ is the Levi-Civita connection of $M_2$, and $\hat{h}$ is the second fundamental
form of the submanifold $M_2\hookrightarrow\mathbf{S}^3\times\mathbf{S}^3$.
Moreover, by direct calculations we can show that $\hat{h}(\bar{e}_i,\bar{e}_j)=(\tfrac{\sqrt{1-\theta^2}}
{4\theta}U+\tfrac{\sqrt{3(1-\theta^2)}}{4\theta}\xi)\delta_{ij},i,j=1,2$. Hence $M_1$ is a
totally geodesic submanifold of $\mathbf{S}^3\times\mathbf{S}^3$, whereas $M_2$ is a totally
umbilical submanifold of $\mathbf{S}^3\times\mathbf{S}^3$.

Applying for \eqref{eqn:2.12}, we further see that $M_1$ and $M_2$ have constant sectional curvature
$\tfrac{3}{4}$ and $\tfrac{1+2\theta^2}{4\theta^2}$, respectively. Thus, $M_1$ (resp.
$M_2$) is locally isometric to $\mathbf{S}^3$ (resp. $\mathbf{S}^2$)
equipped with metric $\tfrac{4}{3}g_0$ (resp. $\tfrac{4\theta^2}{1+2\theta^2}g_0$), where
$g_0$ denotes the standard metric of constant sectional curvature $1$ on $\mathbf{S}^3$ (resp. $\mathbf{S}^2$).
In particular, $M$ is locally diffeomorphic to the product manifold $\mathbf{S}^3\times\mathbf{S}^2$.

By the identification of $M$ with an open subset of $\mathbf{S}^3\times\mathbf{S}^2$,
we can express the hypersurface
$M$ by an immersion $f=(p,q)$ with the parametrization $(x,y)$ of $\mathbf{S}^3\times\mathbf{S}^2$
such that
$$
\begin{aligned}
f:\mathbf{S}^3\times\mathbf{S}^2
\longrightarrow\mathbf{S}^3\times\mathbf{S}^3,\ \ \
(x,y)\mapsto (p(x,y),q(x,y)).
\end{aligned}
$$

From \eqref{eqn:2.10}, $P\xi=\tfrac{1}{2}\xi-\tfrac{\sqrt{3}}{2}U$, \eqref{eqn:3.7}, \eqref{eqn:5.38} and \eqref{eqn:5.48}, it can be verified that
$$
Q\bar{e}_1=\bar{e}_1,\ \ Q\bar{e}_2=\bar{e}_2,\ \
Q\bar{e}_3=-\bar{e}_3,\ \ Qe_4=-\bar{e}_4,\ \ QU=-U.
$$
Then, by the definition of $Q$, it follows that $dp,dq:T(\mathbf{S}^3\times\mathbf{S}^2)\rightarrow
T\mathbf{S}^3$ have the following properties:
\begin{equation}\label{eqn:5.49}
\left\{
\begin{aligned}
(dp(v),0)&=\tfrac{1}{2}(df(v)-Qdf(v))=df(v),\\
(0,dq(v))&=\tfrac{1}{2}(df(v)+Qdf(v))=0,
\end{aligned}
\right.\ \ \ \ \forall\, v\in T(\mathbf{S}^3\times\{pt\}).
\end{equation}
\vskip-3mm
\begin{equation}\label{eqn:5.50}
\left\{
\begin{aligned}
(dp(w),0)&=\tfrac{1}{2}(df(w)-Qdf(w))=0,\\
(0,dq(w))&=\tfrac{1}{2}(df(w)+Qdf(w))=df(w),
\end{aligned}
\right.\ \forall\, w\in T(\{pt\}\times\mathbf{S}^2).
\end{equation}

The first equation of \eqref{eqn:5.50} shows that $p$ depends only on the first entry $x$,
and hence it can be regarded as a mapping from $\mathbf{S}^3$ to $\mathbf{S}^3$.
From \eqref{eqn:5.49} we see that $p:\mathbf{S}^3\to\mathbf{S}^3$ is a local diffeomorphism.
Noting that the pull-back metric $f^*g$ restricted on $\mathbf{S}^3\times \{pt\}$ is exactly
$\tfrac{4}{3}g_0$, $p$ is actually an isometry. By a
re-parametrization of the preimage $\mathbf{S}^3$, we can assume
that $p(x)=x$.

Similarly, from the second equation in \eqref{eqn:5.49} we derive that $q$ depends only on
the second entry $y$, thus $q$ is actually a mapping from $\mathbf{S}^2$ to $\mathbf{S}^3$.
As the second equation in \eqref{eqn:5.50} shows that $dq$ is of rank $2$, then $q(\mathbf{S}^2)$
is a $2$-dimensional submanifold in $\mathbf{S}^3$. Noting that the pull-back metric $f^*g$ restricted
on $\{pt\}\times\mathbf{S}^2$ is $\tfrac{4\theta^2}{1+2\theta^2}g_0$. It follows that
$\mathbf{S}^2$ is totally umbilical immersed in $\mathbf{S}^3$ and, up to an isometry of
$\mathbf{S}^3$, we can assume that $q(y)=\sqrt{1-r^2}+ry$, where
$r=\tfrac{\sqrt{3}\theta}{\sqrt{1+2\theta^2}}$ and $y\in\mathbf{S}^3\cap\mathrm{Im}\,\mathbb{H}$.

Hence, up to isometries of type $\mathcal{F}_{abc}$, $M$ is locally the image of the embedding $f_r$,
corresponding to $0<r<1$, as described in Theorem \ref{thm:1.2}.

\vskip 1mm

Next, we consider the case $\theta=1$. As we mentioned earlier, in this case $M$ satisfies
$A\phi=\phi A$. Then, according to Theorem 5.9 of \cite{H-Y-Z}, $M$ is locally given by the
embedding $f_1$ as described in Theorem \ref{thm:1.2}.

This completes the proof of Theorem \ref{thm:5.1}.
\end{proof}

\begin{theorem}\label{thm:5.2}
Let $M$ be a Hopf hypersurface of the NK $\mathbf{S}^3\times\mathbf{S}^3$ which
possesses three distinct principal curvatures and satisfies $P\{U\}^\perp=\{U\}^\perp$
on $M$. If $P\xi=\tfrac{1}{2}\xi-\tfrac{\sqrt{3}}{2}J\xi$, then, up to isometries
of type $\mathcal{F}_{abc}$, $M$ is locally given by
the embedding $f_r'$ $(0<r\leq1)$ in Theorem \ref{thm:1.2}.
\end{theorem}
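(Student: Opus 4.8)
The plan is to avoid repeating the lengthy integral-submanifold construction of Theorem \ref{thm:5.1} and instead reduce the present case to it by exploiting the isometry $\mathcal{F}_1$, which is exactly what the definition $f_r'=\mathcal{F}_1\circ f_r$ suggests. The first step is to record how $d\mathcal{F}_1$ interacts with the tensors $J$ and $P$. Writing a tangent vector as $Z=(U,V)$ and using $d\mathcal{F}_1(U,V)=(V,U)$, a direct computation from \eqref{eqn:2.1} and \eqref{eqn:2.7} yields the two relations $d\mathcal{F}_1\circ J=-J\circ d\mathcal{F}_1$ and $d\mathcal{F}_1\circ P=P\circ d\mathcal{F}_1$; that is, $\mathcal{F}_1$ anti-commutes with the almost complex structure and commutes with the almost product structure.

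Next I would transport the hypotheses to the image hypersurface $M'=\mathcal{F}_1(M)$, equipped with the unit normal $\xi'=d\mathcal{F}_1(\xi)$. Since $\mathcal{F}_1$ is an isometry, $M'$ is again a Hopf hypersurface with three distinct principal curvatures, and its structure vector satisfies $U'=-J\xi'=-d\mathcal{F}_1(U)$, so that $\{U'\}^\perp=d\mathcal{F}_1\{U\}^\perp$. Using $d\mathcal{F}_1\circ P=P\circ d\mathcal{F}_1$ together with $P\{U\}^\perp=\{U\}^\perp$ gives $P\{U'\}^\perp=\{U'\}^\perp$. The decisive point is the sign flip in the normal relation: applying $d\mathcal{F}_1$ to $P\xi=\tfrac12\xi-\tfrac{\sqrt3}{2}J\xi$ and invoking the two commutation rules produces $P\xi'=\tfrac12\xi'+\tfrac{\sqrt3}{2}J\xi'$. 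Thus $M'$ satisfies precisely the hypotheses of Theorem \ref{thm:5.1}.

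By Theorem \ref{thm:5.1}, $M'$ is locally given, up to an isometry of type $\mathcal{F}_{abc}$, by the embedding $f_r$ for some $0<r\le1$. Since $\mathcal{F}_1$ is an involution, $M=\mathcal{F}_1(M')$ is then locally $\mathcal{F}_1\circ f_r=f_r'$. To keep the normalization honest one checks that $\mathcal{F}_1$ conjugates the family $\{\mathcal{F}_{abc}\}$ into itself: a short computation gives $\mathcal{F}_1\circ\mathcal{F}_{abc}=\mathcal{F}_{bac}\circ\mathcal{F}_1$, so the prefactor $\mathcal{F}_{abc}$ coming from Theorem \ref{thm:5.1} survives as another $\mathcal{F}_{abc}$ after composing with $\mathcal{F}_1$, and the phrase ``up to $\mathcal{F}_{abc}$'' is preserved.

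The routine part is the explicit verification of the two commutation relations for $d\mathcal{F}_1$. The step requiring the most care is the bookkeeping: tracking the orientation of the normal (note that the relation $P\xi=\tfrac12\xi\pm\tfrac{\sqrt3}{2}J\xi$ is insensitive to replacing $\xi$ by $-\xi$, so either choice of $\xi'$ is admissible), confirming that $P\{U'\}^\perp=\{U'\}^\perp$ transfers correctly, and verifying that the $\mathcal{F}_{abc}$-normalization is genuinely stable under conjugation by $\mathcal{F}_1$. Once these are in place, no further analysis of frame fields or of the Codazzi equations is needed, and the conclusion follows immediately from Theorem \ref{thm:5.1}.
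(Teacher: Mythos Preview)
Your proposal is correct and follows essentially the same route as the paper's own proof: both transport $M$ via the isometry $\mathcal{F}_1$, use the relations $d\mathcal{F}_1\circ J=-J\circ d\mathcal{F}_1$ and $d\mathcal{F}_1\circ P=P\circ d\mathcal{F}_1$ to convert $P\xi=\tfrac12\xi-\tfrac{\sqrt3}{2}J\xi$ into $P\xi'=\tfrac12\xi'+\tfrac{\sqrt3}{2}J\xi'$, apply Theorem~\ref{thm:5.1} to $\mathcal{F}_1(M)$, and then invoke $(\mathcal{F}_1)^2=\mathrm{id}$ together with $\mathcal{F}_1\circ\mathcal{F}_{abc}=\mathcal{F}_{bac}\circ\mathcal{F}_1$ to pull the conclusion back. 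The only cosmetic difference is that the paper cites \cite{M-V} for the commutation relations while you verify them directly.
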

\begin{proof}
Given $M$, by using the isometry $\mathcal{F}_1$, we obviously get another Hopf hypersurface
$\mathcal{F}_1(M)$ of the NK $\mathbf{S}^3 \times \mathbf{S}^3$ which also possesses three
distinct principal curvatures.
From Theorem 5.1 of \cite{M-V}, we know that
the differential of the isometry $\mathcal{F}_1$ anticommutes with the almost complex
structure $J$, and commutes with the almost product structure $P$, that is,
$$
d\mathcal{F}_1\circ J=-J\circ d\mathcal{F}_1,\ \ \ \ d\mathcal{F}_1\circ P=
P\circ d\mathcal{F}_1.
$$

Noticing that $\xi':=d\mathcal{F}_1(\xi)$ and $U':=-J\xi'=-d\mathcal{F}_1(U)$ are the
unit normal vector field and the structure vector field of $\mathcal{F}_1(M)$.
By using $P\xi=\tfrac{1}{2}\xi-\tfrac{\sqrt{3}}{2}J\xi$, we have
\begin{equation*}
\begin{aligned}
P\xi'&=Pd\mathcal{F}_1(\xi)=d\mathcal{F}_1P(\xi)=d\mathcal{F}_1(\tfrac{1}{2}\xi-\tfrac{\sqrt{3}}{2}J\xi)\\
&=\tfrac{1}{2}d\mathcal{F}_1(\xi)+\tfrac{\sqrt{3}}{2}Jd\mathcal{F}_1(\xi)=\tfrac{1}{2}\xi'+\tfrac{\sqrt{3}}{2}J\xi'.
\end{aligned}
\end{equation*}
It follows that $P\{U'\}^\perp=\{U'\}^\perp$ holds on $\mathcal{F}_1(M)$.

Noticing that, for any unitary quaternions $a,b,c$, the isometries $\mathcal{F}_{abc}$
and $\mathcal{F}_1$ satisfy $(\mathcal{F}_1)^2={\rm id}$ and
$\mathcal{F}_{abc}\circ\mathcal{F}_1=\mathcal{F}_1\circ\mathcal{F}_{bac}$.
Then, applying for Theorem \ref{thm:5.1} to
the hypersurface $\mathcal{F}_1(M)$,
we immediately conclude the proof of Theorem \ref{thm:5.2}.
\end{proof}

\begin{theorem}\label{thm:5.3}
Let $M$ be a Hopf hypersurface of the NK $\mathbf{S}^3\times\mathbf{S}^3$ which
possesses three distinct principal curvatures and satisfies $P\{U\}^\perp=\{U\}^\perp$
on $M$. If $P\xi=-\xi$, then, up to isometries
of type $\mathcal{F}_{abc}$, $M$ is locally given by
the embedding $f_r''$ $(0<r\leq1)$ in Theorem \ref{thm:1.2}.
\end{theorem}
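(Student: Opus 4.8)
The plan is to mirror the proof of Theorem~\ref{thm:5.2}: rather than argue directly on $M$, I apply the isometry $\mathcal{F}_2$ and show that the resulting hypersurface $\mathcal{F}_2(M)$ falls into the situation already settled in Theorem~\ref{thm:5.1}. Since $\mathcal{F}_2$ is an isometry of the NK $\mathbf{S}^3\times\mathbf{S}^3$ with $\mathcal{F}_2^2=\mathrm{id}$, the image $\mathcal{F}_2(M)$ is again a Hopf hypersurface carrying three distinct principal curvatures, with unit normal $\xi'=d\mathcal{F}_2(\xi)$ and structure vector field $U'=-J\xi'=-d\mathcal{F}_2(U)$. The first ingredient is the behaviour of $d\mathcal{F}_2$ with respect to $J$ and $P$, which I take from Theorem~5.1 of \cite{M-V}: as with $\mathcal{F}_1$, the map $d\mathcal{F}_2$ anticommutes with $J$, i.e.\ $d\mathcal{F}_2\circ J=-J\circ d\mathcal{F}_2$, while with respect to $P$ it intertwines $P$ with a second almost product structure rather than commuting with it.

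The heart of the argument is to verify that $\mathcal{F}_2$ carries the present case $P\xi=-\xi$ onto the configuration $P\xi'=\tfrac12\xi'+\tfrac{\sqrt3}{2}J\xi'$ of Theorem~\ref{thm:5.1}. From $P\xi=-\xi$ and $PJ=-JP$ I first record $PU=-PJ\xi=JP\xi=U$, and then, substituting $J\xi=-U$ and $PJ\xi=-PU=-U$ into \eqref{eqn:2.10}, I get $Q\xi=\tfrac{1}{\sqrt3}(2PJ\xi-J\xi)=\tfrac{1}{\sqrt3}J\xi$. The intertwining rule of \cite{M-V} expresses $P\circ d\mathcal{F}_2$ as $d\mathcal{F}_2$ precomposed with an operator built from $P$, $Q$ and $J$; evaluating that operator on $\xi$ by means of $P\xi=-\xi$ and $Q\xi=\tfrac{1}{\sqrt3}J\xi$ produces $\tfrac12\xi-\tfrac{\sqrt3}{2}J\xi$, and then applying $d\mathcal{F}_2$ together with $d\mathcal{F}_2\circ J=-J\circ d\mathcal{F}_2$ gives $P\xi'=\tfrac12\xi'+\tfrac{\sqrt3}{2}J\xi'$. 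In particular $P\xi'=\tfrac12\xi'-\tfrac{\sqrt3}{2}U'\in\mathrm{Span}\{\xi',U'\}$, so that $\dim\mathfrak{D}=2$ on $\mathcal{F}_2(M)$ and $P\{U'\}^\perp=\{U'\}^\perp$, which are exactly the hypotheses needed for Theorem~\ref{thm:5.1}.

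Applying Theorem~\ref{thm:5.1} to $\mathcal{F}_2(M)$ then shows that, up to an isometry of type $\mathcal{F}_{abc}$, $\mathcal{F}_2(M)$ is locally the image of some $f_r$ with $0<r\le1$, i.e.\ $\mathcal{F}_2(M)=\mathcal{F}_{abc}\circ f_r$ locally. To transport this back to $M$ I use $\mathcal{F}_2^2=\mathrm{id}$ together with the composition identity $\mathcal{F}_2\circ\mathcal{F}_{abc}=\mathcal{F}_{cba}\circ\mathcal{F}_2$, a short quaternionic computation parallel to the $\mathcal{F}_1$ identity used in Theorem~\ref{thm:5.2}. These give $M=\mathcal{F}_2(\mathcal{F}_2(M))=\mathcal{F}_2\circ\mathcal{F}_{abc}\circ f_r=\mathcal{F}_{cba}\circ\mathcal{F}_2\circ f_r=\mathcal{F}_{cba}\circ f_r''$, so that $M$ is locally $f_r''$ up to an isometry of type $\mathcal{F}_{abc}$, as claimed; the $\theta=1$ subcase needs no separate treatment, being subsumed in Theorem~\ref{thm:5.1}. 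I expect the main obstacle to be the first step: correctly recording the $d\mathcal{F}_2$--$P$ intertwining rule from \cite{M-V} and checking that it carries $P\xi=-\xi$ \emph{precisely} onto the case-1 configuration $\tfrac12\xi'+\tfrac{\sqrt3}{2}J\xi'$ and not onto the case-2 configuration $\tfrac12\xi'-\tfrac{\sqrt3}{2}J\xi'$, since a sign slip here would route the argument to the wrong family.
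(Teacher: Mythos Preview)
Your proposal is correct and follows essentially the same route as the paper: apply $\mathcal{F}_2$, use its intertwining with $J$ and $P$ to show $P\xi''=\tfrac12\xi''+\tfrac{\sqrt3}{2}J\xi''$ on $\mathcal{F}_2(M)$, invoke Theorem~\ref{thm:5.1}, and push back via $\mathcal{F}_2^2=\mathrm{id}$ and $\mathcal{F}_{abc}\circ\mathcal{F}_2=\mathcal{F}_2\circ\mathcal{F}_{cba}$. Two small points: the relevant result in \cite{M-V} is Theorem~5.2 (not~5.1), and the paper records the intertwining directly as $d\mathcal{F}_2\circ P=\bigl(-\tfrac12 P+\tfrac{\sqrt3}{2}JP\bigr)\circ d\mathcal{F}_2$, deriving $P\xi''=2\xi''+\sqrt3\,JP\xi''$ and hence $P\xi''=\tfrac12\xi''+\tfrac{\sqrt3}{2}J\xi''$ without any detour through $Q$.
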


\begin{proof}
Given $M$, by using the isometry $\mathcal{F}_2$, we get another
Hopf hypersurface $\mathcal{F}_2(M)$ of the NK $\mathbf{S}^3\times\mathbf{S}^3$
which also possesses three distinct principal curvatures.
From Theorem 5.2 of \cite{M-V}, the differential of the isometry
$\mathcal{F}_2$ satisfies the following relationship with $J$ and $P$:
$$
d\mathcal{F}_2\circ J=-J\circ d\mathcal{F}_2,\ \ \ \
d\mathcal{F}_2\circ P=(-\tfrac{1}{2}P+\tfrac{\sqrt{3}}{2}JP)\circ d\mathcal{F}_2.
$$

Noticing that $\xi'':=d\mathcal{F}_2(\xi)$ and $U'':=-J\xi''=-d\mathcal{F}_2(U)$ are
the unit normal vector field and the structure vector field of $\mathcal{F}_2(M)$.
By using $P\xi=-\xi$, we have
\begin{equation*}
\begin{aligned}
P\xi''&=Pd\mathcal{F}_2(\xi)=-2d\mathcal{F}_2P(\xi)+\sqrt{3}JPd\mathcal{F}_2(\xi)\\
&=2d\mathcal{F}_2(\xi)+\sqrt{3}JP\xi''=2\xi''+\sqrt{3}JP\xi''.
\end{aligned}
\end{equation*}
It follows that
$P\xi''=\tfrac{1}{2}(\xi''-\sqrt{3}PJP\xi'')=\tfrac{1}{2}\xi''+\tfrac{\sqrt{3}}{2}J\xi''$,
and $P\{U''\}^\perp=\{U''\}^\perp$ holds on $\mathcal{F}_2(M)$.

Noticing also that, for any unitary quaternions $a,b,c$, the isometries $\mathcal{F}_{abc}$
and $\mathcal{F}_2$ satisfy $(\mathcal{F}_2)^2={\rm id}$ and
$\mathcal{F}_{abc} \circ \mathcal{F}_2=\mathcal{F}_2 \circ \mathcal{F}_{cba}$.
Then, applying for Theorem \ref{thm:5.1} to the hypersurface $\mathcal{F}_2(M)$,
we immediately conclude the proof of Theorem \ref{thm:5.3}.
\end{proof}

Finally, combining Proposition \ref{prop:5.1} and Theorems \ref{thm:5.1}--\ref{thm:5.3},
we have completed the proof of Theorem \ref{thm:1.2}.\qed

\vskip 2mm

\noindent{\bf Acknowledgements}. The authors are greatly indebted to
the referee for his/her carefully reading the first submitted version 
of this paper and giving elaborate comments and valuable suggestions
on revision so that the presentation can be greatly improved.

%%%%%%%%%%%%%%%%%%%%%%%%%%%%%%%%%%%%%%%%%%%%%%%%%%%%%%%%%%%%%%%%%%%%
\normalsize\noindent

\end{document}